\DeclareSymbolFont{rsfscript}{OMS}{rsfs}{m}{n}
\DeclareSymbolFontAlphabet{\mathrsfs}{rsfscript}
 \newtheorem{Lemma}{Lemma}[section]
 \newtheorem{Theorem}{Theorem}[section]
 \newtheorem{Proposition}{Proposition}[section]
 \newtheorem{Corollary}{Corollary}[section]
\theoremstyle{definition}
 \newtheorem{Remark}{Remark}[section]
 \newtheorem{Example}{Example}[section]
\DeclareMathOperator{\var}{var}
\DeclareMathOperator{\Gr}{Gr}
\DeclareMathOperator{\rk}{rk}
\newcommand{\ovl}{\overline}
\newcommand{\wtil}{\widehat}
\newcommand{\gL}{\mathrsfs{L}}
\newcommand{\gR}{\mathrsfs{R}}
\newcommand{\gH}{\mathrsfs{H}}
\newcommand{\gD}{\mathrsfs{D}}
\newcommand{\gJ}{\mathrsfs{J}}
\newcommand{\K}{\mathrsfs{K}}
\newcommand{\V}{\mathbf{V}}
\newcommand{\W}{\mathbf{W}}
\newcommand{\B}{\mathbf{B}}
\newcommand{\CR}{\mathbf{CR}}
\begin{document}

\thispagestyle{empty}

\begin{flushleft}

\Large

\textbf{Local finiteness for Green's relations in semigroup varieties}

\end{flushleft}

\normalsize

\vspace{-2.25mm}

\noindent\rule{\textwidth}{1.5pt}

\vspace{2mm}

\begin{flushright}

\large
\textsc{Mikhail V. Volkov\footnote{Mikhail Volkov acknowledges support from the Russian Foundation for Basic Research, project no.\ 17-01-00551, the Ministry of Education and Science of the Russian Federation, project no.\ 1.3253.2017, and the Competitiveness Program of Ural Federal University.}}

\vspace{3pt}
\scriptsize
\textit{Institute of Natural Sciences and Mathematics,} \\
\textit{Ural Federal University,} \\
\textit{Lenina 51, 62000 Ekaterinburg, Russia} \\
\texttt{Mikhail.Volkov@usu.ru}

\vspace{2mm}

\large
\textsc{Pedro V. Silva\footnote{Pedro V. Silva was partially supported by CMUP (UID/MAT/00144/2013), which is funded by FCT (Portugal) with national (MEC) and European structural funds (FEDER), under the partnership agreement PT2020.}}

\vspace{3pt}
\scriptsize
\textit{Centro de Matem\'{a}tica, Faculdade de Ci\^{e}ncias, Universidade do Porto,} \\
\textit{Rua Campo Alegre 687, 4169-007 Porto, Portugal} \\
\texttt{pvsilva@fc.up.pt}

\vspace{2mm}

\large
\textsc{Filipa Soares\footnote{Filipa Soares was partially supported by CEMAT-Ci\^encias (UID/Multi/04621/2013), which is funded by FCT (Portugal) with national (MEC) and European structural funds (FEDER), under the partnership agreement PT2020.}}

\vspace{3pt}
\scriptsize
\textit{\'Area Departamental de Matem\'atica,} \\
\textit{ISEL --- Instituto Superior de Engenharia de Lisboa, Instituto Polit\'ecnico de Lisboa,} \\
\textit{Rua Conselheiro Em\'idio Navarro 1, 1959-007 Lisboa, Portugal} \\
\& \hspace{2mm} \textit{CEMAT--CI\^ENCIAS,} \\
\textit{Dep. Matem\'atica, Faculdade de Ci\^encias, Universidade de Lisboa} \\
\textit{Campo Grande, Edif\'icio C6, Piso 2, 1749-016 Lisboa, Portugal} \\
\texttt{falmeida@adm.isel.pt}

\end{flushright}

\vspace{5mm}

\normalsize

\section{Background and overview}
\label{sec:intro}

A semigroup $S$ is called \emph{locally finite} (respectively, \emph{periodic}) if each finitely generated (respectively, each monogenic) subsemigroup in $S$ is finite. A semigroup variety is \emph{locally finite} (respectively, \emph{periodic}) if so are all its members. Clearly, every locally finite variety is periodic but the converse is not true. The issues related to determining extra properties that distinguish locally finite semigroup varieties amongst periodic ones form a vast research area known as Burnside type problems. The reader can find a brief introduction into the main achievements in this area in~\cite[Chapter~3]{Sap14}; for the present discussion, it suffices to reproduce here just one powerful result by Sapir (a part of \cite[Theorem~P]{Sap87}).

Recall two notions involved in the formulation of Sapir's result. A variety is said to be of \emph{finite axiomatic rank} if for some fixed $n>0$, it can be given by a set of identities involving at most $n$ letters. (For instance, every variety defined by finitely many identities is of finite axiomatic rank.) A semigroup $S$ is said to be a \emph{nilsemigroup} if $S$ has an element 0 such that $s0=0s=0$ for every $s\in S$ and a power of each element in $S$ is equal to 0.

\begin{Proposition}
\label{prop:sapir}
A periodic semigroup variety $\V$ of finite axiomatic rank is locally finite if \textup(and, obviously, only if\textup) all groups and all nilsemigroups in $\V$ are locally finite.\qed
\end{Proposition}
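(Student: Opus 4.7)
The ``only if'' direction is immediate, since groups and nilsemigroups in $\V$ themselves belong to $\V$ and hence inherit local finiteness.

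For the converse, assume every group and every nilsemigroup in $\V$ is locally finite, let $S \in \V$ be generated by a finite set $X$, and suppose towards a contradiction that $S$ is infinite. My plan is to decompose $S$ along its Green $\gJ$-order and to locate the source of infinity in either a group or a nilsemigroup lying in $\V$. For each $\gJ$-class $J$ of $S$, write $I(J) = \{s \in S : J_s < J\}$ for the ideal of strictly $\gJ$-smaller elements and form the principal factor $J^{*} = (J \cup I(J))/I(J)$; as a Rees quotient of the subsemigroup $J \cup I(J) \subseteq S$, one has $J^{*} \in \V$. Since $\V$ is periodic, standard structure theory gives that $J^{*}$ is either a null semigroup or a completely $0$-simple Rees matrix semigroup $\mathcal{M}^{0}[G;I,\Lambda;P]$, whose structure group $G$ embeds in $S$ as a group $\gH$-class of $J$, hence $G \in \V$.

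In the completely $0$-simple case, the hypothesis on groups in $\V$ says that every finitely generated subgroup of $G$ is finite; combined with the fact that $S$ is generated by $X$, one argues that $G$ itself is finitely generated, hence finite, and that $I$ and $\Lambda$ are finite, giving $|J^{*}| < \infty$. In the null case, $J^{*}$ together with the underlying ideal assembles into a nilsemigroup in $\V$, so local finiteness of nilsemigroups forces the null portion of $S$ to be controlled. Thus each individual principal factor is finite.

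The remaining, and main, obstacle is to prove that $S$ has only finitely many $\gJ$-classes, hence is finite. This is where the finite axiomatic rank hypothesis must enter, since there are periodic varieties of infinite axiomatic rank whose groups and nilsemigroups are all locally finite yet which fail to be locally finite. My intended route is to factor $S$ by the congruence that collapses the ``group skeleton'' of $S$ (identifying every completely $0$-simple principal factor to its zero) to obtain a canonical nil quotient $\bar{S}$ of $S$, and then to argue that $\bar{S}$ itself lies in $\V$. Once this is done, $\bar{S}$ is a finitely generated nilsemigroup in $\V$, hence finite, which together with the finiteness of each principal factor yields $|S| < \infty$. The delicate point is showing $\bar{S} \in \V$: I would exploit that identities of $\V$ involve at most $n$ variables, so that their validity can be tested on substitutions of at most $n$ elements at a time, and combine this with a combinatorial-on-words argument in the spirit of Shirshov's height theorem or Zimin-word unavoidability to propagate the identities of $\V$ through the collapsing. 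This combinatorial core is the hardest part of the proof and would require substantially more work than the structural reductions above.
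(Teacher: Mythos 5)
First, be aware that the paper does not prove this statement at all: it is quoted as a black box, being a fragment of Sapir's Theorem~P from \cite{Sap87} (see also \cite[Chapter~3]{Sap14}), whose proof is long and difficult. So there is no in-paper argument to compare yours with; what you are attempting is a proof of Sapir's theorem itself.

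Judged on its own terms, your proposal has genuine gaps precisely where the real difficulty lives. First, in the completely $0$-simple case you assert that the structure group $G$ is finitely generated ``combined with the fact that $S$ is generated by $X$''; maximal subgroups of finitely generated semigroups need not be finitely generated, so this requires a substantial argument, and controlling the maximal subgroups of the relatively free semigroups of $\V$ is in fact one of the hard points of Sapir's proof. Second, the ``congruence that collapses the group skeleton'' is not a congruence: identifying each regular $\gJ$-class to a point is in general incompatible with multiplication. What is available is the Rees quotient $S/I$ by the ideal $I$ generated by $E(S)$, which for periodic $\V$ is a nilsemigroup in $\V$ and hence finite when $S$ is finitely generated (this is exactly the content of Lemmas~\ref{lm:nil} and~\ref{lm:extension} of the paper, and it needs no rank hypothesis). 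But finiteness of $S/I$ together with finiteness of each principal factor does not yield $|S|<\infty$: the ideal $I$ may a priori contain infinitely many $\gJ$-classes, all collapsed to the single zero of $S/I$, so your concluding inference fails. Third, and most importantly, the step you yourself flag as ``the hardest part'' --- propagating the identities of $\V$ through the collapsing by a Shirshov/Zimin-type combinatorial argument --- is not a detail to be filled in later; it is essentially the entire content of the theorem (the role of Zimin words and of the rank bound is visible in the paper's Proposition~\ref{prop:avoidable}). As it stands, your text is a reasonable plan of attack with some incorrect intermediate claims, not a proof.
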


It is easy to see that all groups in a periodic semigroup variety $\V$ form a semigroup variety themselves and so do all nilsemigroups in $\V$. Thus, Proposition~\ref{prop:sapir} completely reduces the problem of classifying locally finite semigroup varieties of finite axiomatic rank to two ``opposite'' special cases: the case of varieties consisting of periodic groups and the one of varieties consisting of nilsemigroups. (To emphasize the opposition observe that a periodic group could have been defined as a semigroup $S$ which has an element 1 such that $s1=1s=s$ for every $s\in S$ and a power of each element in $S$ is equal to 1. So, in a sense, nilsemigroups and periodic groups oppose to each other as 0 and 1.)

Sapir~\cite{Sap87} has found a deep and surprisingly elegant algorithmic characterization of locally finite varieties of finite axiomatic rank consisting of nilsemigroups. As for a classification of locally finite varieties of finite axiomatic rank consisting of periodic groups, this problem contains as a special case the classic Burnside problem which in the varietal language is nothing but the problem of classifying locally finite varieties among the Burnside varieties $\B_n$, $n=1,2,\dotsc$. (Recall that $\B_n$ consists of all groups of exponent dividing $n$.) This famous problem is commonly considered as being hopelessly difficult since infinite finitely generated groups of finite exponent belong to most complicated and mysterious objects of combinatorial algebra. The existence of such groups had remained unknown for a long time until it was finally established by Novikov and Adian (\!\cite{NoAd68}, see also \cite{Adi79}) for sufficiently large odd exponents and by Ivanov~\cite{Iva94} and independently by Lysenok~\cite{Lys96} for sufficiently large even exponents. Following \cite{Sap87}, we refer to infinite finitely generated groups of finite exponent as \emph{Novikov--Adian groups} in the sequel.

Having the above complication in mind, we intend to explore a natural relaxation of the property of being locally finite. First we recall that for a semigroup $S$, the notation $S^1$ stands for the least monoid containing $S$, that is\footnote{Here and throughout we use expressions like $A:=B$ to emphasize that $A$ is defined to be $B$.}, $S^1:=S$ if $S$ has an identity element and  $S^1:=S\cup\{1\}$ if $S$ has no identity element; in the latter case the multiplication in $S$ is extended to $S^1$ in a unique way such that the fresh symbol $1$ becomes the identity element in $S^1$.

The following five equivalence relations can be defined on every semigroup $S$:
\begin{itemize}
\item[] $x\,\mathrsfs{R}\,y \Leftrightarrow xS^1 = yS^1$, i.e., $x$ and $y$ generate the same right ideal;
\item[] $x\,\mathrsfs{L}\,y \Leftrightarrow S^1x = S^1y$, i.e., $x$ and $y$ generate the same left ideal;
\item[] $x\,\mathrsfs{J}\,y \Leftrightarrow S^1xS^1 = S^1yS^1$, i.e., $x$ and $y$ generate the same ideal;
\item[] $x\,\mathrsfs{H}\,y \Leftrightarrow xS^1 = yS^1 \land S^1x = S^1y$, i.e.,  $\mathrsfs{H} = \mathrsfs{R} \cap \mathrsfs{L}$;
\item[] $x\,\mathrsfs{D}\,y \Leftrightarrow (\exists z\in S)\ x\,\mathrsfs{R}\,z \land z\,\mathrsfs{L}\,y$, i.e., $\mathrsfs{D} = \mathrsfs{RL}$.
\end{itemize}
These relations, introduced by Green in 1951, cf. \cite{Gre51}, are collectively referred to as \emph{Green's relations}. They play a fundamental role in studying semigroups and, quoting Howie~\cite{How02}, are ``so all-pervading that, on encountering a new semigroup, almost the first question one asks is `What are the Green relations like?'"

Here we use Green's relations to introduce the following generalizations of local finiteness. Let $\K$ be one of the five Green relations $\gJ,\gD,\gL,\gR,\gH$. A semigroup $S$ is said to be \emph{$\K$-finite} if it has only finitely many $\K$-classes; otherwise, $S$ is said to be \emph{$\K$-infinite}. A semigroup variety $\V$ is \emph{locally $\K$-finite} if each finitely generated semigroup in $\V$ is $\K$-finite.

Clearly, locally finite varieties are locally $\K$-finite for each Green's relation $\K$. On the other hand, it is easy to see (cf.\ Lemma~\ref{lm:periodic} below) that  locally $\K$-finite varieties are periodic. Thus, locally $\K$-finite semigroup varieties occupy an intermediate position between locally finite and periodic varieties. The question of precisely locating this position appears to be quite natural within the general framework of Burnside type problems for semigroup varieties.

An additional a priori motivation for introducing local $\K$-finiteness came from the observation that, in every group, all Green's relations coincide with the universal relation whence all groups are locally $\K$-finite for each $\K$. Therefore, local $\K$-finiteness of a semigroup variety imposes no restriction on groups in this variety, and one could anticipate that this generalized local finiteness might ``bypass'' difficulties related to Novikov--Adian groups and lead to an ``absolute'' result rather than a characterization modulo groups in the flavor of Proposition~\ref{prop:sapir}. However, this tempting expectation has not been confirmed a posteriori: on the contrary, Novikov--Adian groups turn out to be behind our characterizations of locally $\K$-finite semigroup varieties. Namely, all the characterizations use the language of ``forbidden objects'' (when to a given property $\Theta$, say, of semigroup varieties, a certain list of ``forbidden'' semigroups is assigned so that a variety $\V$ satisfies $\Theta$ if and only if $\V$ excludes all members of the list), and for each $\K\in\{\gJ,\gD,\gL,\gR,\gH\}$, the ``forbidden objects'' corresponding to the property of being locally $\K$-finite are produced from Novikov--Adian groups by means of rather transparent constructions.

The paper is structured as follows. Section~\ref{sec:prelim} contains the necessary preliminaries. Section~\ref{sec:constructions} introduces four special families of infinite semigroups from which we later select our ``forbidden objects''. In Sections~\ref{sec:rlh} and~\ref{sec:dj} we provide characterizations of locally $\K$-finite varieties for $\K\in\{\gL,\gR,\gH\}$ and, respectively, $\K\in\{\gJ,\gD\}$. Section~\ref{sec:final} collects several concluding remarks and open questions.

\section{Preliminaries}
\label{sec:prelim}

We assume the reader's familiarity with some basic concepts of semigroup theory that can be found in the early chapters of any general
semigroup theory text such as, e.g., \cite{ClPr61,How95}. We also assume the knowledge of some rudiments of the theory of varieties such as the HSP-theorem and the concept of free objects; they all may be found, e.g., in~\cite[Chapter~II]{BuSa81}.

\subsection{Elementary facts about locally $\K$-finite varieties}
\label{subsec:elementary}

The Hasse diagram in Figure~\ref{fig:inclusions} shows inclusions between Green's relations that hold in every semigroup.
\begin{figure}[b]
\begin{center}
\unitlength=.60mm
  \begin{picture}(80,80)(10,0)
    \put(40,70){\circle*{3}}
    \put(42,72){$\mathrsfs{J}$}

    \put(40,50){\circle*{3}}
    \put(42,52){$\mathrsfs{D}$}

    \put(20,30){\circle*{3}}
    \put(10,32){$\mathrsfs{L}$}

    \put(60,30){\circle*{3}}
    \put(62,32){$\mathrsfs{R}$}

    \put(40,10){\circle*{3}}
    \put(40, 3){$\mathrsfs{H}$}

    \put(40,70){\line(0,-1){20}}
    \put(40,50){\line(-1,-1){20}}
    \put(40,50){\line(1,-1){20}}
    \put(20,30){\line(1,-1){20}}
    \put(60,30){\line(-1,-1){20}}
  \end{picture}
\caption{Inclusions between Green's relations}\label{fig:inclusions}
\end{center}
\end{figure}
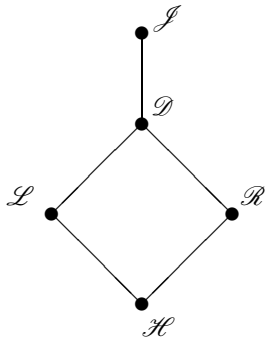
It is easy to see that if $\K_1,\K_2\in\{\gJ,\gD,\gL,\gR,\gH\}$ and $\K_1\subseteq\K_2$ in every semigroup, then each $\K_1$-finite semigroup is also $\K_2$-finite and each locally $\K_1$-finite variety is also locally $\K_2$-finite. From this and from the fact that $\mathrsfs{H} = \mathrsfs{R}\cap \mathrsfs{L}$ by definition, we immediately deduce the following relations between various versions of  $\K$-finiteness for individual semigroups and for semigroup varieties.

\begin{Lemma}
\label{lm:inclusions}
\emph{(i)} A semigroup is $\gH$-finite if and only if it is both $\gR$-finite and $\gL$-finite. A variety is locally $\gH$-finite if and only if it is both locally $\gR$-finite and locally $\gL$-finite.

\emph{(ii)} If a semigroup is either $\gR$-finite or $\gL$-finite, it is also $\gD$-finite. If a variety is either locally $\gR$-finite or locally $\gL$-finite, it is also locally $\gD$-finite.

\emph{(iii)} If a semigroup is $\gD$-finite, it is also $\gJ$-finite. If a variety is locally $\gD$-finite, it is also locally $\gJ$-finite.\qed
\end{Lemma}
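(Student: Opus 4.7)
The plan is to derive everything from two ingredients already laid down in the preceding paragraph: the inclusions visible in Figure~\ref{fig:inclusions}, and the general observation that $\K_1\subseteq\K_2$ in every semigroup implies both ``$\K_1$-finite $\Rightarrow$ $\K_2$-finite'' for semigroups and ``locally $\K_1$-finite $\Rightarrow$ locally $\K_2$-finite'' for varieties. With this machinery in hand, the three clauses reduce to routine bookkeeping.

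For clause (ii), since $\gR\subseteq\gD$ and $\gL\subseteq\gD$, applying the observation to each of these inclusions gives both the semigroup and varietal statements at once. Clause (iii) is the same story applied to $\gD\subseteq\gJ$. In both cases the varietal version is automatic from the semigroup version: a variety $\V$ is locally $\K$-finite by definition iff every finitely generated $S\in\V$ is $\K$-finite, so implications of the form ``$\K_1$-finite $\Rightarrow$ $\K_2$-finite'' lift to ``locally $\K_1$-finite $\Rightarrow$ locally $\K_2$-finite'' without any extra argument.

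Clause (i) needs slightly more care. The forward direction is handled exactly as above, using $\gH\subseteq\gR$ and $\gH\subseteq\gL$ to conclude that every $\gH$-finite semigroup is both $\gR$-finite and $\gL$-finite. For the converse — the only place where one uses something beyond the generic inclusion principle — I would invoke the equality $\gH=\gR\cap\gL$ from the definition. If $S$ has $r$ many $\gR$-classes and $\ell$ many $\gL$-classes, then each $\gH$-class of $S$ is of the form $R\cap L$ for some $\gR$-class $R$ and some $\gL$-class $L$, and hence $S$ has at most $r\ell$ many $\gH$-classes; thus $S$ is $\gH$-finite. Passing to varieties, this pointwise bound applied to each finitely generated $S\in\V$ yields the varietal equivalence. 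There is no real obstacle anywhere, which is presumably why the authors end the statement with \qed and omit the proof.
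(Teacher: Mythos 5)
Your proposal is correct and follows essentially the same route as the paper, which derives the lemma from the two facts stated in the paragraph immediately preceding it: the inclusions between Green's relations (each $\K_2$-class being a union of $\K_1$-classes when $\K_1\subseteq\K_2$) and the definition $\gH=\gR\cap\gL$. Your explicit $r\ell$ bound for the converse of (i) is exactly the detail the paper leaves implicit behind its \qed.
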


The converse of either of the claims in (ii) is not true (see Example~\ref{ex:cr} in the next section). As for (iii), the converse of the claim for varieties holds true, see Corollary~\ref{cor:dj} below. On the level of individual semigroups, $\gD$-finiteness is \textbf{not} equivalent to $\gJ$-finiteness, even in the finitely generated case. An example of a finitely generated semigroup which is $\gJ$-finite while not $\gD$-finite can be found in~\cite[Example~3.3]{SiSo16}.

The ``right'' part of the following observation has been mentioned in the literature, see, e.g., \cite[Section 10, p.400]{Rus98}; its ``bilateral'' part can be obtained by a similar argument.

\begin{Lemma}
\label{lm:ideals}
A semigroup is $\gJ$-finite \textup($\gR$-finite, $\gL$-finite\textup) if and only if it has finitely many ideals \textup(respectively, right ideals, left ideals).\qed
\end{Lemma}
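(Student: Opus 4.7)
The plan is to prove the equivalence by establishing a tight correspondence between $\gJ$-classes and principal (two-sided) ideals, and then noting that every ideal is a union of principal ones. The right-sided and left-sided cases proceed identically, with $\gR$ (resp.\ $\gL$) in place of $\gJ$ and the principal right (resp.\ left) ideals $xS^1$ (resp.\ $S^1x$) replacing $S^1xS^1$. I will write up only the bilateral version in detail.

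First I would recall from the definition that for $x,y\in S$, $x\,\gJ\,y$ if and only if $S^1xS^1=S^1yS^1$. Hence the map sending a $\gJ$-class $J_x$ to the principal ideal $S^1xS^1$ is a well-defined bijection between the set $S/\gJ$ of $\gJ$-classes and the set of principal ideals of $S$. Consequently, if $S$ has finitely many ideals, it has finitely many principal ideals, and therefore finitely many $\gJ$-classes.

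For the converse, the key observation is that every ideal $I$ of $S$ decomposes as
\[
I \;=\; \bigcup_{x\in I} S^1xS^1,
\]
because each $x\in I$ lies in $S^1xS^1\subseteq I$. Thus every ideal of $S$ is a union of principal ideals. If there are only finitely many $\gJ$-classes, say $n$ of them, then there are exactly $n$ principal ideals, so the number of ideals of $S$ is bounded by $2^n$, hence finite.

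The argument is entirely elementary and presents no real obstacle; the only mild point to keep in mind is to work inside $S^1$ rather than $S$, so that the identity $x\in S^1xS^1$ (and similarly $x\in xS^1$, $x\in S^1x$) holds unconditionally, which is what makes the decomposition of an arbitrary ideal as a union of principal ones immediate. The right-ideal and left-ideal versions are obtained by the same reasoning, replacing the role of $\gJ$ by $\gR$ and $\gL$ respectively and using $xS^1$ and $S^1x$ as the corresponding principal objects.
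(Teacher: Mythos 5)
Your proof is correct and is precisely the standard argument (bijection between $\K$-classes and principal ideals in one direction, and the decomposition of an arbitrary ideal as a union of principal ones, giving the $2^n$ bound, in the other). The paper itself omits the proof, citing Ru\v{s}kuc for the one-sided cases, and the intended folklore argument is the one you give.
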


We need the following folklore characterization of periodic semigroup varieties:
\begin{Lemma}
\label{lm:periodic varieties}
For a semigroup variety $\V$, the following are equivalent:
\begin{itemize}
  \item[(i)] $\V$ is periodic;
  \item[(ii)] $\V$ excludes $\mathbb{N}$, the additive semigroup of positive integers;
  \item[(iii)] $\V$ satisfies an identity of the form $x^m=x^{m+k}$ for some $m,k>0$.\qed
\end{itemize}
\end{Lemma}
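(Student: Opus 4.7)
The plan is to prove the three implications (iii)$\Rightarrow$(i)$\Rightarrow$(ii)$\Rightarrow$(iii) in a cycle, using only elementary facts about free objects in varieties.

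For (iii)$\Rightarrow$(i), suppose $\V$ satisfies $x^m=x^{m+k}$ for some $m,k>0$. Then for every $S\in\V$ and every $s\in S$, the substitution $x\mapsto s$ yields $s^m=s^{m+k}$, so the monogenic subsemigroup $\langle s\rangle$ is contained in $\{s,s^2,\dotsc,s^{m+k-1}\}$ and hence is finite. Thus $\V$ is periodic.

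For (i)$\Rightarrow$(ii), observe that $\mathbb{N}$ is itself monogenic (generated by $1$) and infinite, so $\mathbb{N}$ is not a periodic semigroup; since a periodic variety contains only periodic semigroups, $\V$ cannot contain $\mathbb{N}$.

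The main step is (ii)$\Rightarrow$(iii), which is where the universal-algebraic input (HSP theorem, free objects) is needed. Consider the relatively free semigroup $F_{\V}(x)$ on one generator in $\V$; it belongs to $\V$ and is monogenic, generated by $x$. If no identity of the form $x^m=x^{m+k}$ were to hold in $\V$, then in particular none would hold in $F_\V(x)$, so the elements $x,x^2,x^3,\dotsc$ would be pairwise distinct. The map $n\mapsto x^n$ would then be an isomorphism between $\mathbb{N}$ and $F_\V(x)$, placing $\mathbb{N}$ in $\V$ and contradicting (ii). Hence some identity $x^m=x^{m+k}$ must hold in $\V$. The only subtlety here is the existence and correct description of $F_\V(x)$, which is guaranteed by the HSP theorem cited in the preliminaries; no real obstacle arises.
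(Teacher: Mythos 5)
Your proof is correct. The paper states this lemma as folklore and omits the proof entirely (note the \qed in the statement itself), and your cyclic argument --- with the key step (ii)$\Rightarrow$(iii) carried out via the relatively free monogenic semigroup $F_{\V}(x)$, in which $x^p=x^q$ holds precisely when the identity $x^p=x^q$ holds throughout $\V$ --- is exactly the standard argument one would supply.
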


\begin{Lemma}
\label{lm:periodic}
For all $\K\in\{\gJ,\gD,\gL,\gR,\gH\}$, locally $\K$-finite varieties are periodic.
\end{Lemma}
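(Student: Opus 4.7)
The plan is to reduce the statement to a single Green's relation and then exhibit a concrete obstruction. By Lemma \ref{lm:inclusions}, every $\K$-finite semigroup is $\gJ$-finite for each $\K\in\{\gJ,\gD,\gL,\gR,\gH\}$ (the $\gH$-case passes through $\gL$ or $\gR$, then through $\gD$, then to $\gJ$). Consequently, every locally $\K$-finite variety is automatically locally $\gJ$-finite, so it suffices to prove the statement for $\K=\gJ$.

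Next, I would invoke Lemma \ref{lm:periodic varieties}: to show that a locally $\gJ$-finite variety $\V$ is periodic, it is enough to show that $\V$ excludes the additive semigroup $\mathbb{N}$ of positive integers. Note that $\mathbb{N}$ is monogenic (generated by $1$), hence finitely generated, so if $\mathbb{N}\in\V$ then local $\gJ$-finiteness of $\V$ forces $\mathbb{N}$ to be $\gJ$-finite. The whole argument therefore collapses to a single direct computation: I would check that $\mathbb{N}$ has infinitely many $\gJ$-classes.

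For this last step, compute the principal ideal generated by $n\in\mathbb{N}$. Since the operation is addition, we have
\[
\mathbb{N}^1+n+\mathbb{N}^1 \;=\; \{n,n+1,n+2,\dots\},
\]
and these sets are pairwise distinct as $n$ varies over $\mathbb{N}$. Hence distinct positive integers lie in distinct $\gJ$-classes, so $\mathbb{N}$ has infinitely many $\gJ$-classes, i.e.\ is $\gJ$-infinite. This contradicts local $\gJ$-finiteness, showing $\mathbb{N}\notin\V$, and the proof is complete by Lemma \ref{lm:periodic varieties}.

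There is no real obstacle here; the whole argument is essentially a one-line reduction plus the observation that the principal ideals in $\mathbb{N}$ strictly descend. The only small care needed is to make the reduction to the $\gJ$-case explicit, since $\gJ$ is the \emph{coarsest} of the five Green's relations and hence $\gJ$-finiteness is the \emph{weakest} of the five finiteness conditions, which is precisely why it suffices to treat this case.
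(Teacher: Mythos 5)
Your proof is correct and follows essentially the same route as the paper: both arguments reduce to the $\gJ$-case via Lemma~\ref{lm:inclusions}, observe that distinct positive integers generate distinct ideals of $(\mathbb{N},+)$ so that $\mathbb{N}$ is $\gJ$-infinite yet monogenic, and conclude with Lemma~\ref{lm:periodic varieties}. Your explicit computation of the principal ideals $\{n,n+1,\dots\}$ just spells out what the paper leaves as ``clearly''.
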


\begin{proof}
If two integers $n,m\in\mathbb{N}$ are $\gJ$-related in $\mathbb{N}$, then clearly $n=m$. Thus, $\mathbb{N}$ is $\gJ$-infinite. Since $\mathbb{N}$ is generated by the number 1, locally $\gJ$-finite varieties must exclude $\mathbb{N}$, and by Lemma~\ref{lm:periodic varieties}, they are periodic. By Lemma~\ref{lm:inclusions} so are also locally $\K$-finite varieties for all $\K\in\{\gD,\gL,\gR,\gH\}$.
\end{proof}

Combining Lemma~\ref{lm:periodic} with the classic fact that $\gD=\gJ$ in every periodic semigroup (see, e.g., \cite{KoWa57}), we obtain

\begin{Corollary}
\label{cor:dj}
A variety is locally $\gD$-finite if and only if it is locally $\gJ$-finite.\qed
\end{Corollary}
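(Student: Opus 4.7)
The plan is straightforward, since the statement really only requires putting together facts already available in the excerpt, plus the classical theorem that $\gD=\gJ$ holds in every periodic semigroup.

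First I would dispatch the easy direction. Lemma~\ref{lm:inclusions}(iii) already asserts that every locally $\gD$-finite variety is locally $\gJ$-finite, so there is nothing to do there.

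For the nontrivial direction, suppose $\V$ is locally $\gJ$-finite. I would apply Lemma~\ref{lm:periodic} (for $\K=\gJ$) to conclude that $\V$ is periodic, which by definition means every member of $\V$ is a periodic semigroup. Now let $S$ be any finitely generated member of $\V$. Then $S$ is periodic, and, by hypothesis, $S$ is $\gJ$-finite. Invoking the classical Koch--Wallace result (cited to~\cite{KoWa57} in the hint preceding the corollary) that $\gD=\gJ$ in every periodic semigroup, I conclude that $\gD=\gJ$ holds on $S$, so $S$ has the same number of $\gD$-classes as $\gJ$-classes, namely finitely many. Hence $S$ is $\gD$-finite, and as $S$ was an arbitrary finitely generated member of $\V$, the variety $\V$ is locally $\gD$-finite.

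There is no real obstacle here; the only point of substance is the appeal to the Koch--Wallace theorem, and that is precisely the input the hint advertises. The argument is essentially a one-line combination of Lemma~\ref{lm:periodic}, the definition of a periodic variety, and $\gD=\gJ$ for periodic semigroups.
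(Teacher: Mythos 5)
Your proof is correct and is essentially the argument the paper itself gives: the easy direction is Lemma~\ref{lm:inclusions}(iii), and the converse combines Lemma~\ref{lm:periodic} with the Koch--Wallace fact that $\gD=\gJ$ in every periodic semigroup.
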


The following fact also is an easy consequence of semigroup folklore but, since it plays an important role in this paper, we include its proof for the sake of completeness.
\begin{Lemma}
\label{lm:nil}
If $\K\in\{\gJ,\gD,\gL,\gR,\gH\}$ and $\V$ is a locally $\K$-finite variety, then nilsemigroups in $\V$ are locally finite.
\end{Lemma}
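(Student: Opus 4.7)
The plan is to reduce to the case $\K=\gJ$ and then establish the sharper semigroup-theoretic statement that every finitely generated $\gJ$-finite nilsemigroup is finite. By Lemma~\ref{lm:inclusions}, local $\K$-finiteness implies local $\gJ$-finiteness for every $\K\in\{\gJ,\gD,\gL,\gR,\gH\}$, so it suffices to treat $\K=\gJ$. A finitely generated subsemigroup of a nilsemigroup in $\V$ is again a finitely generated nilsemigroup in $\V$, so I may take $S\in\V$ to be finitely generated; then $S$ is $\gJ$-finite and, by Lemma~\ref{lm:ideals}, has only finitely many two-sided ideals.

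The central observation is that each power $S^k$ is a two-sided ideal of $S$, so the descending chain $S\supseteq S^2\supseteq S^3\supseteq\cdots$ lies within a finite set of ideals and therefore stabilizes: there exists $n$ with $I:=S^n=S^{n+k}$ for all $k\geq 0$, which gives $I^2=I$. The key claim is that $I=\{0\}$. Granting this, $S^n=\{0\}$, and since $S$ is generated by some finite set $A$, every non-zero element of $S$ arises as the product of fewer than $n$ letters from $A$; there are only finitely many such products, so $S$ is finite.

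To prove the claim, I would argue by contradiction: suppose $x\in I$ with $x\neq 0$, and fix $m$ strictly greater than the number of $\gJ$-classes of $S$. Since $x\in I^m$, we can write $x=y_1y_2\cdots y_m$ with each $y_i\in I$; each partial product $p_i:=y_1\cdots y_i$ is then non-zero, for otherwise $x$ would be zero. The pigeonhole principle yields indices $i<j$ with $p_i\,\gJ\,p_j$ in $S$. Setting $w:=y_{i+1}\cdots y_j$, we have $p_j=p_iw$, while $p_i\in S^1p_jS^1$ provides $u,v\in S^1$ with $p_i=up_jv=up_iwv$. Iterating yields $p_i=u^k p_i(wv)^k$ for every $k\geq 1$. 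Since $w$ is a non-empty product of elements of $S$, the element $wv$ lies in $S$, and the nilsemigroup hypothesis forces $(wv)^N=0$ for some $N$; hence $p_i=u^N p_i(wv)^N=0$, contradicting $p_i\neq 0$.

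The main technical step is the pigeonhole argument on $\gJ$-classes of the partial products, combined with the idempotency $I^2=I$ that makes arbitrarily long factorizations available; everything else is elementary manipulation using the nilsemigroup axiom. Notice that the variety hypothesis is used only through $\gJ$-finiteness of $S$, so the argument in fact yields the variety-free statement that every finitely generated $\gJ$-finite nilsemigroup is finite.
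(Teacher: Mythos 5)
Your proof is correct, but it follows a genuinely different route from the paper's. The paper proves the sharper and more economical fact that in \emph{any} nilsemigroup the relation $\gJ$ is the equality relation: from $x=syt$, $y=uxv$ one gets $x=(su)^nx(vt)^n$ for all $n$, and if $x\ne y$ then one of $su,vt$ lies in $S$ and so has a zero power, forcing $x=y=0$. Hence every $\gJ$-finite nilsemigroup is finite outright, with no appeal to finite generation or to Lemma~\ref{lm:ideals}. You instead use finite generation essentially: you stabilize the chain $S\supseteq S^2\supseteq\cdots$ inside the finite lattice of ideals to get an idempotent ideal $I=S^n$, kill $I$ by a pigeonhole on the $\gJ$-classes of partial products, and then bound $S$ by words of length $<n$ in the generators. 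Each step checks out (in particular $p_i=u^kp_i(wv)^k$ with $wv\in S$ is the same iteration trick the paper applies directly to a pair of $\gJ$-related elements), and the reduction to $\K=\gJ$ via Lemma~\ref{lm:inclusions} matches the paper. What your version buys is a self-contained "finitely generated $+$ $\gJ$-finite $\Rightarrow$ finite" statement with an explicit nilpotency bound $S^n=\{0\}$; what it costs is the extra machinery (Lemma~\ref{lm:ideals}, the chain argument) and the weaker intermediate conclusion, since the paper's observation that $\gJ$ is trivial on nilsemigroups gives finiteness of every $\gJ$-finite nilsemigroup, finitely generated or not.
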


\begin{proof}
Let $S$ be a nilsemigroup and let $x,y\in S$ are $\gJ$-related. This means that $x=syt$ and $y=uxv$ for some $s,t,u,v\in S^1$. Then $x=suxvt$, and repeatedly substituting $suxvt$ for $x$ in the right hand side of the latter equality, we see that $x=(su)^nx(vt)^n$ for every $n>0$. If $x\ne y$, then either $su\in S$ or $vt\in S$, and since $S$ is a nilsemigroup,  a power of each element in $S$ is equal to 0. Thus, for $n$ large enough, either $(su)^n=0$ or $(vt)^n=0$, and in any case we have $x=0$ and then also $y=0$, a contradiction. Hence, $\gJ$-related elements of a nilsemigroup must be equal, and therefore, every infinite nilsemigroup is $\gJ$-infinite. From this and Lemma~\ref{lm:inclusions}, we obtain that
finitely generated nilsemigroups in $\V$ are finite.
\end{proof}

We say that a semigroup $S$ is a \emph{locally finite extension} of its ideal $J$ if the Rees quotient $S/J$ is locally finite and denote by $E(S)$ the set of all idempotents of $S$.

\begin{Lemma}
\label{lm:extension}
If $\K\in\{\gJ,\gD,\gL,\gR,\gH\}$ and $\V$ is a locally $\K$-finite variety, then every semigroup $S\in\V$ is a locally finite extension of any of its ideals containing $E(S)$.
\end{Lemma}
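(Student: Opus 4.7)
The plan is to reduce the claim to Lemma~\ref{lm:nil} by showing that, under the hypotheses, the Rees quotient $S/J$ is a nilsemigroup lying in $\V$. Once this is done, Lemma~\ref{lm:nil} immediately gives that $S/J$ is locally finite, which is precisely the definition of $S$ being a locally finite extension of $J$.

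The first step is to exploit periodicity. Since $\V$ is locally $\K$-finite, Lemma~\ref{lm:periodic} tells us that $\V$ is periodic, so by Lemma~\ref{lm:periodic varieties}, $S$ satisfies an identity $x^m=x^{m+k}$ for suitable $m,k>0$. A standard argument then shows that for every $x\in S$, some power $x^N$ is an idempotent, and hence $x^N\in E(S)\subseteq J$. Therefore every element of $S\setminus J$ has a power that lies in $J$, which means that in $S/J$ every element has a power equal to the zero of $S/J$.

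The second step is to verify that $S/J$ belongs to $\V$. This is immediate because $S/J$ is a homomorphic image of $S\in\V$, and varieties are closed under homomorphic images. Together with the previous step, this shows that $S/J$ is a nilsemigroup in $\V$.

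Finally, applying Lemma~\ref{lm:nil} to $S/J$, we conclude that $S/J$ is locally finite, as required. There is no real obstacle in this argument: the only thing to be careful about is the combination of the hypothesis $E(S)\subseteq J$ with periodicity, which is exactly what is needed to ensure that nonzero elements of $S/J$ are nilpotent rather than merely periodic.
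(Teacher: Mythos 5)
Your proof is correct and follows essentially the same route as the paper: periodicity of $\V$ (Lemma~\ref{lm:periodic}) gives each element an idempotent power, which lies in $E(S)\subseteq J$, so the Rees quotient $S/J$ is a nilsemigroup in $\V$ and is locally finite by Lemma~\ref{lm:nil}. The extra details you supply (invoking Lemma~\ref{lm:periodic varieties} and noting closure of $\V$ under homomorphic images) are correct but are exactly the steps the paper leaves implicit.
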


\begin{proof}
By Lemma~\ref{lm:periodic} $S$ is periodic whence each element in $S$ has an idempotent power. Hence the Rees quotient of $S$ over any ideal containing $E(S)$ is a nilsemigroup which must be locally finite by Lemma~\ref{lm:nil}.
\end{proof}

Combining Lemmas~\ref{lm:periodic} and~\ref{lm:nil} with Proposition~\ref{prop:sapir}, we immediately obtain

\begin{Corollary}
\label{cor:locfin}
Let $\K\in\{\gJ,\gD,\gL,\gR,\gH\}$ and let $\V$ be a variety of finite axiomatic rank that contains no Novikov--Adian groups. The variety $\V$ is locally $\K$-finite if and only if it is locally finite. \qed
\end{Corollary}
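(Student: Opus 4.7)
The forward direction is immediate from the fact noted just after Lemma~\ref{lm:inclusions} that every locally finite variety is locally $\K$-finite for each Green's relation $\K$, so the real content is the converse. My plan is to verify the three hypotheses of Sapir's Proposition~\ref{prop:sapir} for $\V$ and then invoke it directly.

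First I would observe that $\V$ is periodic by Lemma~\ref{lm:periodic}, which already gives the first hypothesis. Second, Lemma~\ref{lm:nil} immediately says that every nilsemigroup in $\V$ is locally finite, which is the second hypothesis of Proposition~\ref{prop:sapir}. So the only nontrivial point is to argue that all groups in $\V$ are locally finite.

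For this third step I would use the assumption that $\V$ has finite axiomatic rank together with periodicity. By Lemma~\ref{lm:periodic varieties}(iii), $\V$ satisfies an identity of the form $x^m=x^{m+k}$ for some $m,k>0$. In any group $G\in\V$, this identity forces every element to have order dividing $k$, so $G$ has exponent dividing $k$. Hence every finitely generated group in $\V$ is a finitely generated group of finite exponent; if such a group were infinite it would be, by definition, a Novikov--Adian group, contradicting the hypothesis on $\V$. Therefore every finitely generated group in $\V$ is finite, i.e., all groups in $\V$ are locally finite.

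All three hypotheses of Proposition~\ref{prop:sapir} are now in place, and applying it yields that $\V$ is locally finite, completing the backward direction. The argument is essentially an assembly of already established pieces: the only step that requires any genuine thought is the passage from ``no Novikov--Adian groups and finite axiomatic rank'' to ``groups in $\V$ are locally finite'', and this step relies crucially on the fact that periodicity together with finite axiomatic rank forces a uniform bound on the exponent of every group in $\V$, so that the definition of a Novikov--Adian group applies.
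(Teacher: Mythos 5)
Your proposal is correct and follows essentially the same route as the paper, which likewise obtains the corollary by combining Lemma~\ref{lm:periodic}, Lemma~\ref{lm:nil}, and Proposition~\ref{prop:sapir}; your third step (absence of Novikov--Adian groups forces all groups in $\V$ to be locally finite) is exactly the step the paper leaves implicit. One minor remark: the uniform exponent bound on groups in $\V$ already follows from periodicity alone via Lemma~\ref{lm:periodic varieties}(iii), so finite axiomatic rank is needed only to invoke Proposition~\ref{prop:sapir}, not for that step.
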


In view of Corollary~\ref{cor:locfin} our quest for locally $\K$-finite varieties should concentrate on varieties that contain Novikov--Adian groups.

\subsection{A property of Novikov--Adian groups}
\label{subsec:nag}

We will need the following property of semigroup varieties containing a Novikov--Adian group.

\begin{Lemma}
\label{lm:nag}
If a variety $\V$ contains a Novikov--Adian group, then $\V$ contains also a Novikov--Adian group $G$ such that some infinite finitely generated subgroup $H$ of $G$ and some $y\in G$ satisfy $H\cap y^{-1}Hy=\{1\}$.
\end{Lemma}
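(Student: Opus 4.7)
The plan is to pass from the given Novikov--Adian group in $\V$ to a suitable quotient with trivial center, and then take $G$ to be a relatively free group of appropriate rank in the variety this quotient generates, with $H$ spanned by all but one free generator and $y$ being the remaining generator; a retraction argument then forces $H\cap y^{-1}Hy=\{1\}$. The harder step is the initial reduction.

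Phase 1 (\emph{centerless reduction}). Let $G_0\in\V$ be Novikov--Adian, let $Z^*$ denote its hypercenter (the stable value of the transfinite upper central series), and set $G_0':=G_0/Z^*$. Then $G_0'\in\V$ as a quotient of $G_0$, is finitely generated, of finite exponent, and satisfies $Z(G_0')=\{1\}$ by construction. Crucially, $G_0'$ remains infinite: otherwise $Z^*$ has finite index in $G_0$, hence is finitely generated; since $Z^*$ is hypercentral (the upper central series of $G_0$ restricts to a central series of $Z^*$) and finitely generated, it must be nilpotent, and a finitely generated nilpotent group of finite exponent is finite, making $G_0$ itself finite and contradicting the hypothesis. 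Thus $G_0'$ is a centerless Novikov--Adian group in $\V$.

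Phase 2 (\emph{free construction and retraction}). Let $k$ be the minimum number of generators of $G_0'$ and let $G:=F_{k+1}(\var(G_0'))$ be the relatively free group of rank $k+1$ in $\var(G_0')$ with free generators $x_1,\dots,x_{k+1}$. Then $G\in\V$, is finitely generated of finite exponent, and infinite: the subgroup generated by $x_1,\dots,x_k$ is canonically isomorphic to $F_k(\var(G_0'))$ and already surjects onto the infinite group $G_0'$. Set $H:=\langle x_1,\dots,x_k\rangle$ (infinite and finitely generated) and $y:=x_{k+1}$. The universal property yields a retraction $\pi\colon G\to H$ with $\pi(x_i)=x_i$ for $i\le k$ and $\pi(x_{k+1})=1$. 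If $h\in H\cap y^{-1}Hy$, say $h=y^{-1}h'y$ with $h'\in H$, applying $\pi$ gives $h=h'$, so $[h,x_{k+1}]=1$ in $G$. Since $G$ is relatively free in $\var(G_0')$, this relation is in fact an identity of $\var(G_0')$; specializing in $G_0'$ at arbitrary $g_1,\dots,g_k$ and letting $g_{k+1}$ range over $G_0'$ forces $h(g_1,\dots,g_k)\in Z(G_0')=\{1\}$ for every choice. Thus $G_0'$ satisfies the identity $h\equiv 1$, so $h=1$ already in $H\cong F_k(\var(G_0'))$, and therefore $H\cap y^{-1}Hy=\{1\}$.

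The main obstacle is Phase 1: an arbitrary Novikov--Adian group may carry a large center, while the retraction computation of Phase 2 critically uses $Z(G_0')=\{1\}$. The hypercenter quotient is the natural remedy, but verifying that it remains infinite relies on the (standard but non-trivial) fact that a finitely generated nilpotent group of finite exponent is finite. Once Phase 1 is in place, Phase 2 is a short, essentially formal, computation with identities of the relatively free group.
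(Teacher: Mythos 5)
Your proof is correct, and while it shares its engine with the paper's argument, it distributes the work differently. The common core is the retraction trick on a relatively free group: in $F_{k+1}$ of the relevant variety, killing the extra generator shows that any $h\in H\cap y^{-1}Hy$ must equal its conjugate $h'$ and hence commute with the extra free generator, which forces $h$ to take central values under every specialization. The paper applies this with the variety $\var(G_0)$ of the \emph{original} Novikov--Adian group: it concludes that $K:=H_0\cap x_0^{-1}H_0x_0$ lies in the center of the relatively free group $F$, then passes to the quotient $F/K$, and must invoke Schmidt's theorem (an extension of a locally finite group by a finite group is locally finite) to guarantee that $H_0/K$ stays infinite. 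You instead pre-process $G_0$ by factoring out its hypercenter, so that the ``centrality'' conclusion immediately yields $h=1$ inside the relatively free group and no further quotient is needed. The price of your route is the generalized-nilpotency input in Phase~1 --- that a finitely generated hypercentral group is nilpotent (via ``hypercentral implies locally nilpotent'', which you use tacitly and should cite, e.g.\ from Robinson's book) and that a finitely generated nilpotent group of finite exponent is finite; these replace Schmidt's theorem in the paper's version, and the trade is roughly even. A small presentational bonus of your arrangement is that the final $G$ is the relatively free group itself rather than a quotient of it. All the individual steps check out: the quotient $G_0/Z^*$ is centerless and remains an infinite finitely generated group of finite exponent in $\V$, the subgroup $\langle x_1,\dots,x_k\rangle$ of $F_{k+1}(\var(G_0'))$ is indeed canonically a copy of $F_k(\var(G_0'))$, and a relation among free generators of a relatively free group is a law of the variety, so your specialization argument is sound.
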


\begin{proof}
Let $G_0\in\V$ be a Novikov--Adian group with generators $g_1,\dots,g_n$. Denote by $F$ the group freely generated by the elements $x_0,x_1,\dots,x_n$ in the variety generated by $G_0$. Let $H_0$ be the subgroup of $F$ generated by $x_1,\dots,x_n$. Since the group $G_0$ is the image of $H_0$ under the homomorphism that extends the map $x_i\mapsto g_i$, $i=1,\dots,n$, the subgroup $H_0$ is infinite. Consider the subgroup $K:=H_0\cap x_0^{-1}H_0x_0$. If $h=h(x_1,\dots,x_n)$ is an arbitrary element in $K$, then $h$ can be also represented as $h=x_0^{-1}h'x_0$ for a suitable element $h'=h'(x_1,\dots,x_n)\in H_0$. For every $f\in F$, the map that fixes each free generator $x_i$, $i=1,\dots,n$, and sends $x_0$ to $f$ extends to an endomorphism of $F$. Applying this endomorphism to the equality $h=x_0^{-1}h'x_0$, we conclude that $h=f^{-1}h'f$ whence $fh=h'f$. In particular, if we choose the identity element of $F$ to play the role of $f$, we conclude that $h=h'$. Therefore, $fh=hf$ for every $f\in F$ which means that $h$ belongs to the center of $F$. Thus, the subgroup $K$ lies in the center of $F$; in particular, $K$ is Abelian and normal in $F$. Then the subgroup $H:=H_0/K$ of the quotient group $G:=F/K$ is infinite and finitely generated. (Indeed, assuming that $H$ is finite, we would derive from Schmidt's classic theorem \cite[p.429]{Rob96} that $H_0$ is finite as a finitely generated extension of the subgroup $K$, which is locally finite as an Abelian periodic group, by a finite group.) Clearly, if we let $y:=x_0K$, we get $H\cap y^{-1}Hy=\{1\}$.  Since the group $F$ lies in the variety $\V$, so does its quotient $G$.
\end{proof}

\subsection{Three 3-element semigroups}
\label{subsec:3-element}

The following semigroups defined on the set $\{e,a,0\}$ very often pop up in the literature on semigroup varieties:
\begin{itemize}
\item[] $N_2^1$, with $e^2=e$, $ea=ae=a$, and all other products equal $0$;
\item[] $N_2^\ell$, with $e^2=e$, $ea=a$, and all other products equal $0$;
\item[] $N_2^r$, with $e^2=e$, $ae=a$, and all other products equal $0$.
\end{itemize}
Each of these semigroups is obtained by adjoining an idempotent (denoted by $e$) to the 2-element zero semigroup $N_2:=\{a,0\}$;
the idempotent serves as a two-sided identity in $N_2^1$, as a left identity in $N_2^\ell$, and a right identity in $N_2^r$,
hence the notation.

Here we collect some known results involving the three semigroups that will be used in the present paper. Notice that in the sources to which we refer, notations for these semigroups vary; in particular, in \cite{PaVo05,PaVo06a,PaVo06b,Vol89,Vol00} the semigroup $N_2^1$ was denoted by $C$ and the semigroups $N_2^\ell$ and $N_2^r$ were denoted by $P$ and $P^*$ respectively.

By a \emph{divisor} of a semigroup $S$ we mean any homomorphic image of a subsemigroup of $S$, and by $\Gr S$ we denote the union of all subgroups of $S$. Observe that $\Gr S$ contains all idempotents of $S$. The following result is a plain semigroup version of \cite[Theorem~3.2]{Vol00}\footnote{There is a misprint in the formulation of this result in~\cite{Vol00}: ``left ideal'' should read ``right ideal''.}. (The paper \cite{Vol00} deals with epigroups but every periodic semigroup is an epigroup.) Since the proof of this result has been omitted in~\cite{Vol00}, we provide it here for the sake of completeness.

\begin{Lemma}
\label{lm:grSideal}
Let $S$ be a periodic semigroup. The set $\Gr S$ is a right ideal of $S$ if and only if none of the semigroups $N_2^1$ and $N_2^\ell$ are among the divisors of $S$.
\end{Lemma}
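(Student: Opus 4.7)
The plan is to prove the two implications separately. The forward direction is a short argument that exploits periodicity of $T$, while the converse requires an explicit construction of a Rees-quotient divisor inside a two-generated subsemigroup of $S$.

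For the forward direction, suppose $N \in \{N_2^1, N_2^\ell\}$ arises as a divisor of $S$ via a surjective homomorphism $\phi \colon T \to N$ with $T$ a subsemigroup of $S$. Since $T$ is periodic, some idempotent power of any preimage of $e$ is an idempotent $f \in T$ with $\phi(f) = e$; I also pick any $u \in \phi^{-1}(a)$. Then $f \in \Gr S$, and I claim $fu \notin \Gr S$. Indeed, if $fu$ were in a subgroup $H_h$, then $h$ would equal $(fu)^m$ for some $m \geq 1$, and $\phi(h) = a^m$ would have to be idempotent in $N$; since $a^2 = 0$ in $N$, this forces $m \geq 2$ and $\phi(h) = 0$. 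The identity $fu = fu \cdot h$ then yields $\phi(fu) = a \cdot 0 = 0$, contradicting $\phi(fu) = a$. Hence $\Gr S$ is not a right ideal.

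For the converse, assume $\Gr S$ is not a right ideal, fix $g \in \Gr S$ and $s \in S$ with $gs \notin \Gr S$, and let $e$ be the idempotent of the maximal subgroup containing $g$. Then $e \cdot (gs) = gs \notin \Gr S$, so after relabelling I may assume that $e$ is an idempotent, $a \notin \Gr S$, and $ea = a$. The plan is to exhibit the desired divisor inside $V := \langle e, a \rangle$, splitting into two cases according to whether $ae = a$ (Case~A) or $ae \neq a$ (Case~B).

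In Case~A, $e$ acts as a two-sided identity on $a$, so $V = \{e\} \cup \langle a \rangle$, and I would take $I := \{a^k \in V : k \geq 2\}$ and verify $V/I \cong N_2^1$. In Case~B, a routine rewriting of words in $\{e, a\}^+$ using $e^2 = e$ and $ea = a$ shows that every element of $V$ has one of the forms $e$, $a^k$, or $a^k e$ with $k \geq 1$; I would then take $I := \{a^k : k \geq 2\} \cup \{a^k e : k \geq 1\}$ and verify $V/I \cong N_2^\ell$. In each case the closure of $I$ under multiplication and the reading off of the quotient's multiplication table are straightforward. The main obstacle is to check that $\{e, a\} \cap I = \emptyset$: one must rule out each of the identifications $e = a^k$, $a = a^k$ (for $k \geq 2$), $e = a^k e$, and $a = a^k e$ (for $k \geq 1$). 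The first three are incompatible with $a \notin \Gr S$, since combining any such equality with $ea = a$ produces a relation of the form $a = a^{k+1}$ with $k \geq 1$, which forces $a$ into a subgroup; the last collapses to $ae = a$ after right-multiplying by $e$ and using $e^2 = e$, contradicting the Case~B hypothesis.
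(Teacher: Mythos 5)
Your proof is correct and follows essentially the same route as the paper: the converse direction builds the same two-generated subsemigroup $\langle e,a\rangle$ with $e$ idempotent, $ea=a$, $a\notin\Gr S$, and exhibits $N_2^1$ or $N_2^\ell$ as a Rees quotient, with your checks that $e,a\notin I$ playing the role of the paper's ``routine'' verification that the complement of $\{e,a\}$ is an ideal (including the paper's separate case $ae=e$, which your exclusion of $e=a^ke$ absorbs). The only cosmetic difference is in the forward direction, where you chase elements through the homomorphism directly instead of observing, as the paper does, that the right-ideal property is equivalent to the divisor-inherited identity $s^{\omega+1}t=(s^{\omega+1}t)^{\omega+1}$, which visibly fails in $N_2^1$ and $N_2^\ell$.
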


\begin{proof}
\emph{Necessity}. Each element $s\in S$ has an idempotent power which we denote by $s^\omega$, following the convention adopted in the finite semigroup theory. We let $s^{\omega+1}:=ss^\omega$. Then $s\in\Gr S$ if and only if $s=s^{\omega+1}$, and $s^{\omega+1}$ belongs to $\Gr S$ for every $s\in S$. These observations readily imply that the set $\Gr S$ forms a right ideal in $S$ if and only if
\begin{equation}
\label{eq:grSideal}
s^{\omega+1}t=(s^{\omega+1}t)^{\omega+1}
\end{equation}
for all $s,t\in S$. The condition \eqref{eq:grSideal} is inherited by the divisors of $S$; on the other hand, substituting $e$ for $s$ and $a$ for $t$, we see that \eqref{eq:grSideal} fails in both $N_2^1$ and $N_2^\ell$. Hence neither $N_2^1$ nor $N_2^\ell$ divides $S$.

\smallskip

\emph{Sufficiency}. Suppose that $\Gr S$ is not a right ideal of $S$. Then there exist $s,t\in S$ such that $s\in\Gr S$ and $st\notin\Gr S$. Let $e:=s^\omega$ and $a:=st$. Then $ea=a$. Consider the subsemigroup $T$ of $S$ generated by $e$ and $a$ and let $J:=T\setminus\{e,a\}$. It can be routinely checked that $J$ is an ideal in $T$. Consider the product $ae$. If $ae=e$, we multiply through by $a$ on the right and get $a^2=a$ which is impossible since $a$ is not a group element. Thus, either $ae=a$ or $ae\in J$. In the former case the Rees quotient $T/J$ is isomorphic to $N_2^1$ while in the latter one the quotient is isomorphic to $N_2^\ell$. Hence either $N_2^1$ or $N_2^\ell$ divides $S$.
\end{proof}

Combining Lemma~\ref{lm:grSideal} with its dual, we obtain a plain semigroup version of \cite[Corollary~3.3]{Vol00}:

\begin{Corollary}
\label{cor:grSideal}
Let $S$ be a periodic semigroup. The set $\Gr S$ is an ideal of $S$ if and only if none of the semigroups $N_2^1$, $N_2^\ell$, and $N_2^r$ are among the divisors of $S$.\qed
\end{Corollary}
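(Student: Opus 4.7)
The plan is to derive this corollary by a direct application of Lemma~\ref{lm:grSideal} together with its left-right dual, using the fact that a subset of $S$ is an ideal if and only if it is simultaneously a left ideal and a right ideal.

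First I would observe that the class of semigroups in question is closed under left-right duality: the opposite semigroup $S^{\mathrm{op}}$ is periodic iff $S$ is, and $\Gr(S^{\mathrm{op}})=\Gr(S)$. Moreover, among the three ``forbidden'' 3-element semigroups, the opposite of $N_2^{\ell}$ is $N_2^{r}$ and vice versa, while $N_2^{1}$ is self-dual (since $e$ is a two-sided identity for $a$). Since the class of divisors of $S^{\mathrm{op}}$ is precisely the class of opposites of divisors of $S$, applying Lemma~\ref{lm:grSideal} to $S^{\mathrm{op}}$ yields the dual statement: $\Gr S$ is a \emph{left} ideal of $S$ if and only if neither $N_2^{1}$ nor $N_2^{r}$ divides $S$.

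Next I would combine the two statements. A subset is a two-sided ideal precisely when it is both a left and a right ideal, so $\Gr S$ is an ideal of $S$ iff it is a right ideal and a left ideal. By Lemma~\ref{lm:grSideal}, being a right ideal excludes $N_2^{1}$ and $N_2^{\ell}$ as divisors; by its dual, being a left ideal excludes $N_2^{1}$ and $N_2^{r}$ as divisors. The union of these forbidden lists is exactly $\{N_2^{1},N_2^{\ell},N_2^{r}\}$, which gives the corollary.

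There is essentially no obstacle here, since all the substantive work has already been done in the proof of Lemma~\ref{lm:grSideal}; the only point worth flagging is the routine verification that the divisor relation commutes with forming opposite semigroups, so that the dual form of the lemma is legitimate.
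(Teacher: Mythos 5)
Your proposal is correct and matches the paper's approach exactly: the paper derives the corollary by combining Lemma~\ref{lm:grSideal} with its left--right dual, which is precisely what you do, and your verification that $N_2^\ell$ and $N_2^r$ are mutually opposite while $N_2^1$ is self-dual is the right (routine) detail to check.
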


Let $R_2$ stand for the 2-element right zero semigroup.

\begin{Lemma}[\cite{PaVo06a}, Lemma~2.8]
\label{lm:pdivisor}
The direct product $N_2^1\times R_2$ has the semigroup $N_2^\ell$ as a divisor.\qed
\end{Lemma}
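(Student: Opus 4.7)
The plan is to exhibit a subsemigroup $T$ of $N_2^1 \times R_2$ and an ideal $I$ of $T$ such that the Rees quotient $T/I$ is isomorphic to $N_2^\ell$. Let me write $R_2=\{f,g\}$, so that $xy=y$ for all $x,y\in R_2$. The conceptual point driving the construction is that, although $e$ is a two-sided identity for $a$ in $N_2^1$, the right-zero multiplication in $R_2$ destroys the right identity property in the product (an element $(e,f)$ cannot act as a right identity on $(a,g)$ because the second coordinate collapses to $f$), while the left identity property is preserved. This is precisely the asymmetry between $N_2^1$ and $N_2^\ell$.

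First I would let $T$ be the subsemigroup of $N_2^1 \times R_2$ generated by $(e,f)$ and $(a,g)$. A short computation with the generators yields
\[
(e,f)^2=(e,f),\quad (e,f)(a,g)=(a,g),\quad (a,g)(e,f)=(a,f),\quad (a,g)^2=(0,g),
\]
and then $(a,f)^2=(0,f)$, after which no new elements appear. Hence $T=\{(e,f),(a,g),(a,f),(0,f),(0,g)\}$.

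Next I would take $I:=\{(a,f),(0,f),(0,g)\}$ and verify that $I$ is an ideal of $T$. This is a finite case-check: any product involving an element of $I$ has first coordinate in $\{0,a\}$, and the only way to obtain first coordinate $a$ from such a product is to multiply $(a,f)$ on the right by $(e,f)$ or on the left by $(e,f)$, which in both cases returns $(a,f)\in I$. All other products land with first coordinate $0$ and hence stay in $I$.

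Finally, I would identify the Rees quotient. The set $T/I$ has three elements: the class $\{(e,f)\}$, the class $\{(a,g)\}$, and the zero class $I$. The nontrivial products are $(e,f)^2=(e,f)$ and $(e,f)(a,g)=(a,g)$, while $(a,g)(e,f)=(a,f)\in I$ and $(a,g)^2=(0,g)\in I$. These relations are exactly the defining relations of $N_2^\ell$ (with $(e,f)\leftrightarrow e$, $(a,g)\leftrightarrow a$, $I\leftrightarrow 0$), so $T/I\cong N_2^\ell$, which gives $N_2^\ell$ as a divisor of $N_2^1\times R_2$.

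There is no real obstacle here beyond choosing the right generators; the only thing to be careful about is not to collapse the elements $(a,g)$ and $(a,f)$ into the same class, since the whole point is that they remain distinguishable after passing to the quotient (one becomes $a$, the other becomes $0$), and this is what prevents the quotient from being $N_2^1$.
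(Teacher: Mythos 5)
Your proof is correct: all the products you list check out ($T$ is indeed the five-element set you describe, $I$ is an ideal, and the Rees quotient has exactly the multiplication table of $N_2^\ell$ under the stated identification). The paper itself gives no proof of this lemma, merely citing \cite{PaVo06a}, and your construction --- taking the subsemigroup generated by $(e,f)$ and $(a,g)$ and killing the ideal containing $(a,g)(e,f)=(a,f)$ --- is the natural direct verification, exploiting precisely the asymmetry you identify (the right-zero factor destroys the right-identity property while preserving the left one).
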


Let $L_2$ denote the 2-element left zero semigroup. We say that a semigroup $S$ is a \emph{semigroup with central idempotents} if $es=se$ for all $e\in E(S)$ and $s\in S$.

\begin{Lemma}[\cite{Vol89}, Lemma~2]
\label{lm:central}
Every semigroup that has neither $N_2^\ell$ nor $N_2^r$ among its divisors and neither $R_2$ nor $L_2$ among its subsemigroups is a semigroup with central idempotents.\qed
\end{Lemma}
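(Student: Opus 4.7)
The approach is to prove the contrapositive: if some $e \in E(S)$ fails to commute with some $s \in S$, then one of the four forbidden configurations must appear in $S$. Set $a := es$, $b := se$, $p := ese$; the identities $e^2 = e$, $ea = a$, $be = b$, $ae = eb = p$ are immediate. The list of forbidden objects is self-dual under the interchange $L_2 \leftrightarrow R_2$, $N_2^\ell \leftrightarrow N_2^r$, so, passing to the opposite semigroup if necessary, I may assume $a \neq p$; otherwise $a = p = b$, yielding $es = ese = se$, which contradicts the hypothesis.

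The argument then splits into three cases. If $p = e$, a short computation using $ese = e$ gives $a^2 = (ese)s = es = a$, so $\{e, a\}$ is closed with multiplication table $e^2 = e$, $ea = a$, $ae = e$, $a^2 = a$ --- a copy of $R_2$. If $p \neq e$ and $a^2 = a$, then $e, a, p$ are pairwise distinct and $p^2 = a^2 e = ae = p$, $pa = a(ea) = a^2 = a$, $ap = (aa)e = a^2 e = p$, so $\{a, p\}$ is another copy of $R_2$. Both cases give immediate contradictions with the hypothesis.

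The main case is $p \neq e$ and $a^2 \neq a$. Here the plan is to exhibit $N_2^\ell$ as a divisor of the subsemigroup $T := \langle e, a \rangle$ of $S$. Set $I := T \setminus \{e, a\}$ and let $\varphi \colon T \to N_2^\ell = \{e, a, 0\}$ send $e \mapsto e$, $a \mapsto a$, $I \to 0$. The generator products of $T$ match those of $N_2^\ell$ precisely when $p \in I$ (automatic from the case) and $a^2 \in I$ (which requires $a^2 \neq e$); once $I$ is shown to be an ideal of $T$, the quotient $T/I \cong N_2^\ell$ completes the contradiction.

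The principal obstacles are therefore (i) the sub-case $a^2 = e$, where $\{e, a, p\}$ is closed and essentially behaves like a cyclic group of order $2$ with an auxiliary element, so one must invoke the dual element $b = se$ and the opposite warm-up calculation to surface one of $L_2$, $R_2$, or $N_2^r$; and (ii) the verification that no word of length $\geq 2$ in the alphabet $\{e, a\}$ evaluates in $S$ to either $e$ or $a$ beyond the forced reductions $e^n = e$ and $e^n a = a$. Each candidate exotic collapse --- $a^n = a$ for $n \geq 3$, $a^n = e$, or $aea = a$ --- produces short multiplication tables that, via the same mechanism as in the warm-up cases, force one of the four forbidden objects into $S$. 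This finite but intricate enumeration is the technical heart of the proof, and is where the absence of all four forbidden objects is simultaneously exploited.
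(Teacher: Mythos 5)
The paper does not prove this lemma at all --- it is quoted from [Vol89, Lemma~2] with a \qed{} --- so there is no in-paper argument to compare against; your proposal has to stand on its own. Your setup and the two warm-up cases are correct: from $ea=a$, $ae=p\neq a$ one does get $R_2$ inside $S$ when $p=e$ (via $a^2=a$) and when $p\neq e$, $a^2=a$ (via the pair $\{a,p\}$), and the reduction to the configuration ``$ea=a\neq ae$'' via the opposite semigroup is legitimate.

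However, in the main case the proposal stops exactly where the lemma's content lies. Writing $T=\langle e,a\rangle$, the relations $e^2=e$, $ea=a$ reduce every element of $T$ to the form $a^n$ or $a^ne$, so the assertion that $I=T\setminus\{e,a\}$ is an ideal is precisely the assertion that none of the collapses $a^n=a$ ($n\ge 3$), $a^n=e$ ($n\ge 2$), $a^me=a$ or $a^me=e$ ($m\ge 2$) occurs --- and you declare this ``finite but intricate enumeration'' to be the technical heart of the proof without carrying it out. That is a genuine gap, not a routine omission: each collapse needs its own argument (e.g.\ for $a^n=a$ one must pass to the idempotent $f=a^{n-1}$ and show that $\{f,fe\}$ is a right-zero pair unless $fe=f$, in which case $ae=(af)e=a(fe)=af=a$ gives a direct contradiction; for $a^me=e$ one checks that $q=a^m$ satisfies $q^2=q$, $eq=q$, $qe=e$, so $\{e,q\}$ is a copy of $R_2$). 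Moreover, the partial guidance you do give is off target: the sub-case $a^2=e$, which you flag as a principal obstacle requiring the dual element $b=se$, is in fact immediate, since $a^2=e$ forces $ae=a^3=ea=a$, contradicting $ae\neq a$; your list of ``exotic collapses'' omits $a^me\in\{e,a\}$ entirely while including $aea=a$, which is nothing but $a^2=a$ and is already excluded by the case hypothesis. The strategy can be completed along these lines, but as written the decisive verification is asserted rather than proved.
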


Combining Lemma~\ref{lm:pdivisor} and its dual with Lemma~\ref{lm:central}, we obtain

\begin{Corollary}
\label{cor:central}
If a semigroup variety $\V$ contains the semigroup $N_2^1$ but excludes the semigroups $N_2^\ell$ and $N_2^r$, then $\V$ consists of semigroups with central idempotents.\qed
\end{Corollary}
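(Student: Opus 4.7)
The plan is to reduce the corollary to Lemma~\ref{lm:central} by verifying, for an arbitrary semigroup $S \in \V$, the four exclusion hypotheses that lemma requires: $S$ should have neither $N_2^\ell$ nor $N_2^r$ among its divisors, and neither $R_2$ nor $L_2$ among its subsemigroups. Once this is established, Lemma~\ref{lm:central} immediately delivers that every $S \in \V$ has central idempotents.

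The exclusion of $N_2^\ell$ and $N_2^r$ as divisors is essentially free: since $\V$ is a variety, it is closed under taking homomorphic images of subsemigroups, so any divisor of a member of $\V$ lies in $\V$; the hypothesis that $\V$ excludes $N_2^\ell$ and $N_2^r$ therefore passes to divisors of every $S \in \V$.

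The nontrivial verification is the exclusion of $R_2$ and $L_2$ as subsemigroups, and this is where Lemma~\ref{lm:pdivisor} enters. I would argue by contradiction: if some $S \in \V$ contained $R_2$ as a subsemigroup, then $R_2 \in \V$ (varieties are closed under subsemigroups), and since $N_2^1 \in \V$ by hypothesis, also $N_2^1 \times R_2 \in \V$. By Lemma~\ref{lm:pdivisor}, this direct product has $N_2^\ell$ as a divisor, so $N_2^\ell \in \V$, contradicting the assumption. The case of $L_2$ is symmetric: one invokes the dual of Lemma~\ref{lm:pdivisor}, stating that $N_2^1 \times L_2$ has $N_2^r$ as a divisor, to derive the analogous contradiction with the exclusion of $N_2^r$.

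There is no real obstacle here; the argument is a short bookkeeping exercise combining the varietal closure properties with the two structural lemmas already in place. The only point that deserves explicit mention when writing the proof is that the dual of Lemma~\ref{lm:pdivisor} is valid because left-right duality is an involution on the class of semigroups that preserves direct products and sends $N_2^1$ to itself while interchanging $R_2 \leftrightarrow L_2$ and $N_2^\ell \leftrightarrow N_2^r$.
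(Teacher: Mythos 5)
Your proposal is correct and is exactly the argument the paper intends: the paper gives no written proof beyond the remark that the corollary follows by ``combining Lemma~\ref{lm:pdivisor} and its dual with Lemma~\ref{lm:central},'' and your write-up supplies precisely the bookkeeping that remark compresses (divisor exclusion is inherited by members of the variety, and a copy of $R_2$ or $L_2$ inside some $S\in\V$ would put $N_2^\ell$ or $N_2^r$ into $\V$ via the direct product with $N_2^1$). No gaps.
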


\subsection{Completely regular semigroups}
\label{subsec:cr}

Recall that a semigroup $S$ is called \emph{completely regular} is $S$ is a union of groups, in other words, if $S=\Gr S$. Here we state some classic facts about completely regular semigroups in the form convenient for the usage in the present paper. The first of these facts is basically Clifford's theorem (see \cite[Theorem~4.6]{ClPr61} or \cite[Theorem~4.1.3]{How95}). Observe that \emph{completely simple} semigroups can be defined as completely regular semigroups on which the relation $\gD$ coincides with the universal relation. By a \emph{semilattice} we mean a semigroup satisfying the identities $xy=yx$ and $x=x^2$.

\begin{Proposition} Let $S$ be a completely regular semigroup.

\emph{(i)} The relation $\gD$ on $S$ coincides with $\gJ$ and is a congruence on $S$.

\emph{(ii)} The quotient $S/\gD$ is a semilattice and the $\gD$-classes are completely simple semigroups \textup(called \emph{completely simple components} of $S$\textup).

\emph{(iii)} Green's relations $\gR$ and $\gL$ on $S$ are unions of the respective relations on the completely simple components of $S$.\qed
\label{prop:clifford}
\end{Proposition}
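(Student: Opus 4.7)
My plan is to establish the proposition as the classical Clifford structural theorem for completely regular semigroups, using the fact that each $x\in S$ lies in a maximal subgroup $H_{e_x}$ with identity $e_x$ and group inverse $x^{-1}$ satisfying $xx^{-1}=x^{-1}x=e_x$ and $xe_x=e_xx=x$.

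For part (i), $S$ is periodic (each $x$ has itself as an idempotent power inside $H_{e_x}$), so the Koch--Wallace theorem invoked before Corollary~\ref{cor:dj} gives $\gD=\gJ$. The congruence claim reduces to showing that $\gJ$ is preserved under left and right multiplication. Given $x\gJ y$ with factorizations $x=pyq$ and $y=uxv$, I would argue: for any $z\in S$, the idempotent $(xz)^{\omega}$ lies in the same $\gH$-class (hence same $\gJ$-class) as $xz$, and using the group inverses of $x,y,z$ together with the substitutions $x=pyq$, $y=uxv$, one exhibits elements of $S^1$ realizing $(xz)^{\omega}$ inside $S^1(yz)S^1$ and vice versa, whence $xz\gJ yz$; the case of left multiplication is handled symmetrically.

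For part (ii), since $x\gH x^2$ for all $x\in S$ (both lie in $H_{e_x}$), the quotient $S/\gD$ is idempotent, i.e., a band. Commutativity $xy\gJ yx$ is established by a parallel argument: the idempotents $(xy)^\omega$ and $(yx)^\omega$ are $\gD$-equivalent via manipulations with group inverses, placing each in the principal two-sided ideal of the other. Hence $S/\gD$ is a semilattice. Each $\gD$-class $T_\alpha$ is then a subsemigroup (the preimage of $\alpha$ under the semilattice quotient), inherits complete regularity from $S$, and has $\gD|_{T_\alpha}$ universal by construction, so $T_\alpha$ is a completely simple semigroup.

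For part (iii), the inclusions $\gR,\gL\subseteq\gD$ force any $\gR$- or $\gL$-related pair to lie in a single completely simple component $T_\alpha$. It remains to show that $\gR$ and $\gL$ computed inside $T_\alpha$ coincide with the restrictions of the ambient relations from $S$. If $x,y\in T_\alpha$ and $xS^1=yS^1$, write $y=xs$ with $s\in S^1$; then the semilattice structure of $S/\gD$ (specifically $\alpha\beta\alpha=\alpha$, where $\beta$ is the $\gD$-class of $s$ and $\alpha\beta=\alpha$ holds because $xs\in T_\alpha$) forces $e_x s e_y\in T_\alpha$ and $y=x(e_x s e_y)$, yielding $xT_\alpha^1=yT_\alpha^1$; the symmetric argument handles $\gL$. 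The main technical obstacle is the congruence claim in part (i); once that is in hand, the remaining assertions follow routinely from the semilattice decomposition combined with the internal group structure of each $\gD$-class.
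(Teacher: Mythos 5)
The paper itself gives no argument for this proposition: it is quoted as Clifford's classical theorem with pointers to \cite{ClPr61} and \cite{How95}, so what you are really being asked for is a reconstruction of that classical proof. Your reconstruction opens with a false step. A completely regular semigroup need not be periodic: the additive group of integers is a group, hence completely regular, yet no positive power of $1$ is idempotent. Consequently you cannot invoke the Koch--Wallace theorem (which gives $\gD=\gJ$ only in \emph{periodic} semigroups) to prove part (i), and the $\omega$-powers $(xz)^{\omega}$, $(xy)^{\omega}$ that your arguments for (i) and (ii) manipulate simply do not exist in general. Note that the paper is itself careful to write ``periodic completely regular'' wherever periodicity is genuinely needed (e.g.\ in Theorem~\ref{thm:cr}), precisely because complete regularity alone does not supply it, and Corollary~\ref{cor:cr-ljf} applies the present proposition to completely regular semigroups that are not assumed periodic. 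What complete regularity does give you is, for each $x$, the identity $e_x$ of the maximal subgroup containing $x$, with $e_x\mathrel{\gH}x$; but $e_x$ is not a power of $x$, and $\gD=\gJ$ has to come out of the structure theory rather than out of periodicity.

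The remaining skeleton is the standard textbook route (prove $a\mathrel{\gJ}a^2$ and $ab\mathrel{\gJ}ba$, deduce that $\gJ$ is a semilattice congruence, identify the classes as completely simple semigroups, then restrict $\gR$ and $\gL$), and your part (iii) computation $y=x(e_xse_y)$ with $e_xse_y\in T_\alpha$ is correct once (i) and (ii) are available. But the two load-bearing steps are exactly the ones you leave at the level of ``one exhibits elements of $S^1$'': compatibility of $\gJ$ with multiplication, and the fact that each class $T_\alpha$ is completely simple \emph{as a semigroup in its own right} (saying that the restriction of the ambient $\gD$ to $T_\alpha$ is ``universal by construction'' does not address the relation $\gD$ computed inside $T_\alpha$, which is what the definition of completely simple requires). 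The usual way to close both is to show that $J_{ab}$ is the meet of $J_a$ and $J_b$ in the $\gJ$-order, using $(ab)^2=a(ba)b$ together with $r\mathrel{\gJ}r^2$; this yields the congruence property, idempotency and commutativity of the quotient at one stroke, and then each $T_\alpha$ is seen to be a simple completely regular semigroup by writing $y=(e_yu)x(ve_y)$ whenever $y=uxv$ with $u,v\in S^1$, hence completely simple. As written, the proposal does not compile into a proof; repaired along these lines it becomes the classical argument that the paper cites.
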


\begin{Corollary}
\label{cor:cr-ljf}
Every finitely generated completely regular semigroup is $\gJ$-finite.
\end{Corollary}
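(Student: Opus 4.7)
The plan is to exploit the structure theorem for completely regular semigroups given just above. Let $S$ be a completely regular semigroup generated by a finite set $X$. By part~(i) of Proposition~\ref{prop:clifford}, the relation $\gJ$ on $S$ coincides with $\gD$, so it suffices to show that $S$ has only finitely many $\gD$-classes.

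By part~(ii) of Proposition~\ref{prop:clifford}, $\gD$ is a congruence on $S$ and the quotient $S/\gD$ is a semilattice. Since the canonical homomorphism $S \to S/\gD$ is surjective, $S/\gD$ is generated as a semigroup by the finite set $\{x/\gD : x \in X\}$. The number of $\gD$-classes of $S$ therefore equals the cardinality of the semilattice $S/\gD$, so the whole claim reduces to the observation that a finitely generated semilattice is finite.

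The latter is elementary: in any semilattice, multiplication is commutative and idempotent, so every product of generators depends only on the \emph{set} of generators appearing in it. Hence a semilattice generated by $n$ elements has at most $2^n - 1$ distinct elements, one for each nonempty subset of the generating set. Applying this with $n = |X|$ gives that $|S/\gD| \le 2^{|X|} - 1 < \infty$, which is the desired $\gJ$-finiteness of $S$.

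I do not foresee a real obstacle here; the only thing to double-check is that the surjection $S \twoheadrightarrow S/\gD$ indeed transports a generating set of $S$ to a generating set of $S/\gD$, which is immediate for any semigroup homomorphism, and that the bound on finitely generated semilattices is applied correctly.
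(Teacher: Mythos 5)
Your argument is correct and follows exactly the paper's own route: pass to the semilattice $S/\gD$, note it is finitely generated and hence finite, and use $\gD=\gJ$ from Proposition~\ref{prop:clifford}(i). The only difference is that you spell out the elementary $2^{|X|}-1$ bound for finitely generated semilattices, which the paper leaves implicit.
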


\begin{proof}
Let $S$ be a finitely generated completely regular semigroup. Then the semilattice $S/\gD$ is finitely generated and hence finite. Thus, $S$ has only finitely many $\gD$-classes, and since $\gD=\gJ$ on $S$, we see that $S$ is $\gJ$-finite.
\end{proof}

\begin{Proposition}
\label{prop:rees-suchkevich}
\emph{(i)} \emph{(Rees--Sushkevich's Theorem, see \cite[Theorem~3.5]{ClPr61} or \cite[Theorem~3.3.1]{How95})} For every completely simple semigroup $S$, there exist non-empty sets $I$ and $\Lambda$ \textup(the \emph{index sets}\textup), a group $G$ \textup(the \emph{structure group}\textup), and a $\Lambda\times I$-matrix $P=(p_{\lambda i})$ with entries in $G$ \textup(the \emph{sandwich matrix}\textup) such that $S$ is isomorphic to the \emph{Rees matrix semigroup} $M(G;I,\Lambda;P)$ defined as the set $I\times G\times\Lambda$ equipped with the multiplication
\[
(i,g,\lambda)(j,h,\mu):=(i,gp_{\lambda j}h,\mu).
\]

\emph{(ii)} \emph{(See, e.g., \cite[Lemma~3.2]{ClPr61})} Green's relations $\gR$ and $\gL$ on the Rees matrix semigroup $M(G;I,\Lambda;P)$ are characterized as follows:
\begin{align*}
(i,g,\lambda)\mathrel{\gR}(j,h,\mu)&\ \text{ if and only if }\ i=j;\\
(i,g,\lambda)\mathrel{\gL}(j,h,\mu)&\ \text{ if and only if }\ \lambda=\mu.&&\rule{5.6cm}{0pt}\qed
\end{align*}
\end{Proposition}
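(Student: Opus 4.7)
The plan is to prove part (i) by exploiting the ``egg-box'' structure of the single $\gD$-class $S$ and part (ii) by direct computation from the multiplication rule.

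For part (i), since $S$ is completely simple, it consists of a single $\gD$-class, and since $S$ is completely regular, every $\gH$-class is a group. Fix an idempotent $e\in S$ and set $G:=H_e$, the maximal subgroup at $e$. Let $I$ index the $\gR$-classes of $S$ and $\Lambda$ index the $\gL$-classes, with $i_0\in I$ and $\lambda_0\in\Lambda$ the indices of the $\gR$- and $\gL$-classes of $e$. Because $\gR\circ\gL=\gD$ is universal on $S$, the intersection $H_{i\lambda}:=R_i\cap L_\lambda$ is non-empty for every $(i,\lambda)\in I\times\Lambda$; a standard argument using Green's lemma (right and left translations inducing bijections between $\gH$-classes within the same $\gR$- and $\gL$-class respectively) shows each $H_{i\lambda}$ is a group isomorphic to $G$. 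Next, pick representatives $q_i\in H_{i\lambda_0}$ with $q_{i_0}=e$ and $r_\lambda\in H_{i_0\lambda}$ with $r_{\lambda_0}=e$. A direct verification using Green's lemma shows that the map
\[
\varphi\colon I\times G\times\Lambda\longrightarrow S,\qquad (i,g,\lambda)\longmapsto q_i g r_\lambda,
\]
is a bijection onto $S$. Defining the sandwich matrix $P=(p_{\lambda i})$ by $p_{\lambda i}:=r_\lambda q_i\in G$, one computes
\[
(q_i g r_\lambda)(q_j h r_\mu)=q_i g(r_\lambda q_j)h r_\mu=q_i(gp_{\lambda j}h)r_\mu,
\]
so $\varphi$ is an isomorphism from $M(G;I,\Lambda;P)$ onto $S$.

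For part (ii), the multiplication rule immediately shows that right multiplication preserves the first coordinate and left multiplication preserves the third, so $(i,g,\lambda)\mathrel{\gR}(j,h,\mu)$ forces $i=j$ and $(i,g,\lambda)\mathrel{\gL}(j,h,\mu)$ forces $\lambda=\mu$. Conversely, given two triples sharing the first coordinate, one exhibits explicit elements of the form $(k,x,\nu)$ that carry one to the other using the invertibility of $p_{\lambda j}$ in $G$; the $\gL$-case is symmetric.

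I expect the main technical obstacle to be the proof that each $H_{i\lambda}$ is a group isomorphic to $G$ (equivalently, the verification that $\varphi$ is a bijection): one must carefully apply Green's lemma to argue that the translations $x\mapsto q_i x$ and $x\mapsto xr_\lambda$ restrict to bijections between $\gH$-classes and compose appropriately so that the factorization $q_i g r_\lambda$ is unique. Once this ``coordinatization'' is in hand, both the associative law check and the characterization of $\gR$ and $\gL$ reduce to routine manipulations using that the entries $p_{\lambda i}$ are units in $G$.
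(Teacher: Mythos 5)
Your argument is correct and is precisely the classical coordinatization proof of the Rees--Sushkevich theorem (egg-box structure, Green's lemma, the map $(i,g,\lambda)\mapsto q_igr_\lambda$ with $p_{\lambda i}:=r_\lambda q_i$) together with the routine verification of part (ii); this is exactly the argument in the sources the paper cites. The paper itself supplies no proof here --- it states the proposition as a known classical fact with references --- so there is nothing to contrast beyond noting that your sketch matches the standard textbook treatment.
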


Even though representing completely simple semigroups as Rees matrix constructs has not been unavoidable for our proofs, we prefer to utilize Rees coordinates wherever this may straighten and simplify technicalities.

For every semigroup variety $\V$, we denote by $\CR(\V)$ the class of its completely regular members. We need also the following folklore fact.

\begin{Lemma}
\label{lm:crv}
If $\V$ is a periodic variety, the class $\CR(\V)$ also forms a variety.
\end{Lemma}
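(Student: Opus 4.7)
The plan is to show that $\CR(\V)$ is actually a subvariety of $\V$, by exhibiting a single identity whose addition to the identities defining $\V$ cuts out precisely the completely regular members.

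First I would invoke Lemma~\ref{lm:periodic varieties} to obtain positive integers $m,k$ such that $\V$ satisfies the identity $x^m=x^{m+k}$. Setting $t:=mk$, an easy induction on this identity gives $x^t=x^{t+jk}$ for every $j\ge 0$ in every $S\in\V$; since $2t=t+(m)\cdot k$, this yields $x^{2t}=x^t$, i.e.\ $x^t$ is an idempotent for every $x\in S$. Thus in any $S\in\V$ the element $x^t$ coincides with the unique idempotent power $x^\omega$ of $x$ used in Section~\ref{subsec:3-element}.

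Next I would recall the standard fact that an element $x$ of a periodic semigroup lies in a subgroup if and only if $x=x\cdot x^\omega$, i.e.\ $x=x^{t+1}$. (One direction is immediate; conversely, if $x=x^{t+1}$ then $x$ lies in the maximal subgroup with identity $x^t$.) Consequently, for $S\in\V$, being completely regular (i.e.\ $S=\Gr S$) is equivalent to satisfying the identity
\[
x=x^{t+1}.
\]
Therefore $\CR(\V)$ is exactly the subclass of $\V$ defined by this one extra identity, which makes $\CR(\V)$ a subvariety of $\V$ and hence a variety.

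The only delicate point is the passage from the variety identity $x^m=x^{m+k}$ to a single exponent $t$ that serves as the idempotent power uniformly across all members of $\V$; once $t=mk$ is identified, the rest is routine. No use of Novikov--Adian groups or Sapir's theorem is needed here.
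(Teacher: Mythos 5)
Your proof is correct and follows essentially the same route as the paper: both identify $\CR(\V)$ as the subvariety of $\V$ cut out by a single identity of the form $x=x^{n+1}$, using Lemma~\ref{lm:periodic varieties} to get $x^m=x^{m+k}$ and the standard fact that an element lies in a subgroup iff it is fixed by multiplication with its idempotent power. The only difference is cosmetic: the paper observes that cancelling $x^m$ inside the group containing $x$ already gives the sharper identity $x=x^{k+1}$, whereas you use the larger exponent $t=mk$; either choice works.
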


\begin{proof}
By Lemma~\ref{lm:periodic varieties}, $\V$ satisfies an identity of the form $x^m=x^{m+k}$ for some $m,k>0$. If a completely regular semigroup satisfies $x^m=x^{m+k}$, it satisfies also the identity $x=x^{k+1}$, and it is easy to see that every semigroup satisfying $x=x^{k+1}$ is completely regular. Therefore the class $\CR(\V)$ is exactly the subvariety of $\V$ defined within $\V$ by the identity $x=x^{k+1}$.
\end{proof}

\subsection{Finitely generated semigroups and small extensions}
\label{subsec:Rees}

Let $S$ be a semigroup and let $T$ be a subsemigroup of $S$. If the set $S\setminus T$ is finite, we say (following Jura~\cite{Jur78}) that  $T$ is a \emph{large subsemigroup} of $S$, and $S$ is a \emph{small extension} of $T$. It is known that many properties of semigroups are inherited by small extensions and by large subsemigroups, see \cite{CaMa13} for a comprehensive survey. We will need two results of this sort.

\begin{Lemma}[\cite{Jur78}]
\label{lm:Jura}
Every large subsemigroup of a finitely generated semigroup is finitely generated.\qed
\end{Lemma}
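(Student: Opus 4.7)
The plan is to exhibit a finite subset $Y \subseteq T$ that generates $T$, where the size of $Y$ is controlled by the finite set $F := S \setminus T$ together with a finite generating set of $S$. Specifically, I would fix a finite generating set $X$ of $S$, set $N := 2|F|+1$, and take $Y$ to be the set of all elements $t \in T$ that admit at least one factorization $t = x_{i_1} x_{i_2} \cdots x_{i_k}$ with $x_{i_j} \in X$ and $k \leq N$. Since $X$ is finite, only finitely many such words exist, so $Y$ is finite.

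To show that $Y$ generates $T$, I would use strong induction on $n(t)$, the length of a shortest $X$-factorization of $t$. The base case $n(t) \leq N$ places $t$ in $Y$ directly. For the inductive step, write a shortest factorization $t = x_1 x_2 \cdots x_n$ with $n > N$, and examine the prefixes $p_k := x_1 \cdots x_k$ and suffixes $s_{k+1} := x_{k+1} \cdots x_n$ for $1 \leq k \leq n-1$. If some such $k$ satisfies $p_k \in T$ and $s_{k+1} \in T$ simultaneously, then $t = p_k s_{k+1}$ splits $t$ into two elements of $T$ whose $X$-factorizations have lengths $k$ and $n-k$, both strictly less than $n$; the inductive hypothesis places both factors, hence $t$, in $\langle Y \rangle$.

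The remaining possibility is that for every $k \in \{1,\ldots,n-1\}$ at least one of $p_k$, $s_{k+1}$ lies in $F$. Setting $A := \{k : p_k \in F\}$ and $B := \{k : s_{k+1} \in F\}$, this forces $A \cup B = \{1,\ldots,n-1\}$, whence $|A|+|B| \geq n-1 > 2|F|$ and at least one of $|A|, |B|$ exceeds $|F|$. If $|A| > |F|$, the pigeonhole principle gives $i < j$ in $A$ with $p_i = p_j \in F$, and excising the block between these prefixes yields the strictly shorter factorization $t = x_1 \cdots x_i x_{j+1} \cdots x_n$, contradicting the minimality of $n(t)$; the case $|B| > |F|$ is symmetric, using equal suffixes. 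Thus the splitting case must occur, and the induction closes.

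The main obstacle is calibrating the threshold $N$: the value $2|F|+1$ is exactly what the double pigeonhole over prefixes and suffixes requires, and it simultaneously guarantees that the ``indecomposable'' elements of $T$ — those for which no good split exists — all have $X$-length at most $N$ and therefore fall into $Y$. Everything else, including verifying that substitutions of equal prefixes preserve the value of $t$ and that the split factors lie below $n(t)$ in the induction order, is routine bookkeeping once this threshold is fixed.
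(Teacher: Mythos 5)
Your proof is correct. Note that the paper itself gives no argument for this lemma --- it is stated with an immediate \qed and attributed to Jura --- so there is nothing internal to compare against; your write-up supplies a complete, self-contained proof. Every step checks out: $Y$ is finite; in a shortest $X$-factorization $x_1\cdots x_n$ of $t\in T$ with $n>2|F|+1$, the assumption that every cut point $k$ has $p_k\in F$ or $s_{k+1}\in F$ forces, via $|A|+|B|\ge n-1>2|F|$, a repeated prefix or repeated suffix inside $F$, and splicing out the intervening block does produce a genuine shorter factorization of the same element (since $p_i=p_j$ gives $t=p_js_{j+1}=p_is_{j+1}$), contradicting minimality; hence a cut with both halves in $T$ exists, and both halves have strictly smaller $n(\cdot)$, so strong induction closes. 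The one place worth double-checking --- that $i\ge 1$ and $j\le n-1$ so both spliced pieces are nonempty --- holds because $A,B\subseteq\{1,\dots,n-1\}$. Your route differs from the classical one (Jura; see also Ru\v{s}kuc and the Cain--Maltcev survey cited as \cite{CaMa13}), which exhibits a smaller explicit generating set consisting of elements of $T$ of the form $uav$ with $a$ a generator of $S$ and $u,v\in (S\setminus T)\cup\{1\}$, and verifies generation by tracking which prefixes and suffixes of a word over $X$ fall into $T$. Your minimality-plus-pigeonhole argument trades that explicit description for a cruder length bound $N=2|F|+1$ (so a larger, but still finite, generating set), in exchange for an induction that is arguably easier to make airtight.
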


\begin{Lemma}[\cite{Rus98}, Sections 10 and 11]
\label{lm:small extension}
For each $\K\in\{\gL,\gR,\gJ\}$, every small extension of a $\K$-finite semigroup is $\K$-finite.\qed
\end{Lemma}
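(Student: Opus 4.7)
The plan is to pass through Lemma~\ref{lm:ideals}, which recasts $\gJ$-finiteness (respectively $\gR$-finiteness, $\gL$-finiteness) as the finiteness of the set of two-sided (respectively right, left) ideals. Once this reformulation is in place, the lemma becomes a routine counting argument exploiting the finiteness of $S\setminus T$.

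Fix $\K\in\{\gJ,\gR,\gL\}$ and let $T$ be a large subsemigroup of a semigroup $S$. Write $\mathcal{I}(T)$ and $\mathcal{I}(S)$ for the collections of $\K$-type ideals of $T$ and of $S$ respectively, and assume $\mathcal{I}(T)$ is finite. For every $I\in\mathcal{I}(S)$, the intersection $I\cap T$ is either empty or a $\K$-type ideal of $T$: if $x\in I\cap T$ and $t\in T$, then the relevant one-sided product (both one-sided products, in the two-sided case) lies in $I$ because $I$ absorbs elements of $S$, and in $T$ because $T$ is a subsemigroup, hence in $I\cap T$. Consequently the assignment
\[
I \longmapsto \bigl(I\cap T,\ I\setminus T\bigr)
\]
is an injection from $\mathcal{I}(S)$ into $\bigl(\mathcal{I}(T)\cup\{\emptyset\}\bigr)\times \mathcal{P}(S\setminus T)$, since $I$ is recovered as the union of its two components. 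Both factors are finite, so $\mathcal{I}(S)$ is finite and $S$ is $\K$-finite by another application of Lemma~\ref{lm:ideals}.

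There is essentially no obstacle here: once one spots that a $\K$-type ideal of $S$ is determined by a $\K$-type ideal of $T$ together with an arbitrary subset of the finite complement $S\setminus T$, the proof reduces to a boundedness count. The only mild subtlety is that not every element of the codomain of the displayed injection need arise from some $I\in\mathcal{I}(S)$, but this is irrelevant since only an upper bound on $|\mathcal{I}(S)|$ is required. It is worth noting that this argument does \emph{not} extend to $\K\in\{\gH,\gD\}$, reflecting the absence of an analogous ``ideal count'' characterization for those relations.
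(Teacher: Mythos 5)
Your argument is correct and complete: the reduction via Lemma~\ref{lm:ideals} to counting ideals, the observation that $I\cap T$ is a (possibly empty) ideal of $T$ of the appropriate type, and the injection $I\mapsto(I\cap T,\,I\setminus T)$ together give a clean bound $|\mathcal{I}(S)|\le(|\mathcal{I}(T)|+1)\cdot 2^{|S\setminus T|}$. Note, however, that the paper does not prove this lemma at all --- it is imported from Ru\v{s}kuc~\cite{Rus98} with a \textup{\qedsymbol} and no argument --- so there is no internal proof to compare against; what you have supplied is a self-contained elementary proof of the direction actually needed (small extension of a $\K$-finite semigroup), which is weaker than the two-way equivalence between $\K$-finiteness of $S$ and of a large subsemigroup $T$ that Ru\v{s}kuc establishes, and correspondingly much shorter. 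Your decomposition is in fact the same device the authors themselves deploy later in the sufficiency halves of Proposition~\ref{thm:rlf} (the decomposition~\eqref{eq:decomposition}) and Theorem~\ref{thm:central} (the decomposition~\eqref{eq:decomposition2}), so it fits the paper's toolkit exactly. Your closing remark is also accurate: $\gH$ and $\gD$ admit no ideal-counting characterization, and the paper gets the $\gH$ case (Corollary~\ref{cor:small extension}) only indirectly, via $\gH$-finite $\Leftrightarrow$ $\gR$-finite and $\gL$-finite.
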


Since a semigroup is $\gH$-finite if and only if it is both $\gL$-finite and $\gR$-finite, Lemma~\ref{lm:small extension} implies a similar result for $\gH$-finiteness:

\begin{Corollary}
\label{cor:small extension}
Every small extension of a $\gH$-finite semigroup is $\gH$-finite.\qed
\end{Corollary}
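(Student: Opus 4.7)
The plan is to leverage the two ingredients already available in the preceding text: Lemma~\ref{lm:inclusions}(i), which decomposes $\gH$-finiteness into the conjunction of $\gR$-finiteness and $\gL$-finiteness, and Lemma~\ref{lm:small extension}, which guarantees that small extensions preserve $\gL$-finiteness and $\gR$-finiteness separately. Since the target property has just been characterized as the intersection of two properties each of which is known to be closed under the operation in question, the corollary should follow by a one-line combination argument.

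Concretely, I would let $S$ be a small extension of an $\gH$-finite semigroup $T$, so $S\setminus T$ is finite by definition. First, apply Lemma~\ref{lm:inclusions}(i) to $T$ to conclude that $T$ is simultaneously $\gR$-finite and $\gL$-finite. Next, apply the $\gR$-case of Lemma~\ref{lm:small extension} to deduce that $S$, being a small extension of the $\gR$-finite semigroup $T$, is itself $\gR$-finite; the symmetric argument with the $\gL$-case yields that $S$ is $\gL$-finite. Finally, invoke Lemma~\ref{lm:inclusions}(i) in the other direction on $S$ to conclude that $S$ is $\gH$-finite.

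There is essentially no obstacle here, since the statement is a formal consequence of the two quoted facts; the only thing worth checking is that Lemma~\ref{lm:inclusions}(i) is stated as an equivalence (``$\gH$-finite if and only if both $\gR$-finite and $\gL$-finite''), so both implications are indeed available for free use. No appeal to periodicity, to the structure of Green's classes, or to any semigroup-theoretic construction is needed, which is why the result can be presented as a corollary rather than an independent lemma.
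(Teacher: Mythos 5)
Your argument is correct and is precisely the one the paper intends: the corollary is derived from Lemma~\ref{lm:small extension} together with the equivalence of $\gH$-finiteness with the conjunction of $\gR$-finiteness and $\gL$-finiteness (Lemma~\ref{lm:inclusions}(i)). Nothing is missing.
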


We also need two further properties of finitely generated semigroups. The first of them comes from \cite[Theorem 8.2]{RRW98}. It gives a sufficient (in fact, also necessary) condition for the direct product of a finite semigroup with an infinite one to be finitely generated.

\begin{Lemma}
\label{lm:RRW}
Let $S$ be a finite semigroup with $S^2=S$. Then for every finitely generated semigroup $T$, the direct product $S\times T$  is finitely generated.\qed
\end{Lemma}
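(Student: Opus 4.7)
The plan is to exhibit an explicit finite generating set for $S\times T$, relying on the hypothesis $S^2=S$ to balance the lengths of factorizations on the two coordinates. Let $X=\{t_1,\dotsc,t_n\}$ be a finite generating set of $T$, and take
\[
A:=\{(s,t_i)\mid s\in S,\ i=1,\dotsc,n\}\subseteq S\times T.
\]
Since $S$ is finite, $A$ is finite, so it suffices to show that $A$ generates $S\times T$.

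First I would note that $S^2=S$ implies $S^k=S$ for every $k\geq 1$: indeed, $S^k\subseteq S$ is automatic, and by induction $S^{k+1}=S\cdot S^k=S\cdot S=S$. Hence every element of $S$ admits a factorization into any prescribed positive number of factors from $S$. Now pick an arbitrary $(s,t)\in S\times T$ and write $t=t_{i_1}t_{i_2}\cdots t_{i_k}$ with $k\geq 1$. If $k=1$, then $(s,t)=(s,t_{i_1})$ lies in $A$ directly. If $k\geq 2$, use $S^k=S$ to factor $s=s_1s_2\cdots s_k$ with $s_j\in S$; then
\[
(s,t)=(s_1,t_{i_1})(s_2,t_{i_2})\cdots(s_k,t_{i_k}),
\]
a product of elements of $A$.

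There is essentially no hard step here: the only subtlety is realizing that one must put every pair $(s,t_i)$ into the generating set (rather than, say, just one copy of $S$ and one copy of $X$), because the coordinates have to be split into the same number of factors and the first coordinate can be anything in $S$ when $k=1$. The role of the hypothesis $S^2=S$ is precisely to guarantee that, for a word of length $k$ on the $T$-side, one can always match it with a length-$k$ factorization on the $S$-side, regardless of how small $k$ is relative to $|S|$.
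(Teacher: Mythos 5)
Your proof is correct: the key observation that $S^2=S$ forces $S^k=S$ for all $k\ge 1$, so that any length-$k$ factorization of $t$ over the generators of $T$ can be matched by a length-$k$ factorization of $s$, is exactly the standard argument behind this result. The paper itself gives no proof here, merely citing \cite[Theorem 8.2]{RRW98}, and your argument is essentially the one used there, so there is nothing further to compare.
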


The second property is the following simple observation.
\begin{Lemma}
\label{lm:fg}
Let $S$ be a finitely generated semigroup and $T$ a subsemigroup of $S$ such that the complement $S\setminus T$ forms an ideal in $S$.
Then $T$ is finitely generated.
\end{Lemma}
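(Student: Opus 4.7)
The plan is to start from a finite generating set $X$ of $S$ and extract a generating set for $T$ by a simple partition argument exploiting the ideal hypothesis on $S\setminus T$.

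First I would write $X=X_T\sqcup X_I$, where $X_T:=X\cap T$ and $X_I:=X\cap(S\setminus T)$; since $X$ is finite, so is $X_T$. The key observation is that no element of $T$ can be written as a product of elements of $X$ that uses any letter from $X_I$. Indeed, if $t\in T$ admitted an expression $t=x_1x_2\cdots x_n$ with $x_i\in X$ and some $x_j\in X_I\subseteq S\setminus T$, then, writing the product as $(x_1\cdots x_{j-1})\,x_j\,(x_{j+1}\cdots x_n)$ (with the understanding that empty prefixes/suffixes are omitted), the hypothesis that $S\setminus T$ is an ideal of $S$ forces $t\in S\setminus T$, contradicting $t\in T$. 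Thus every element of $T$ is a product of letters from $X_T$ alone, and $X_T$ generates $T$, which is therefore finitely generated.

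There is really no substantial obstacle here; the whole argument rests on the single, basic fact that both left- and right-ideal absorption propagate through any subword containing a forbidden letter. The only mild point to get right is handling the corner cases (the product might consist of a single letter, or the offending letter might be at one of the endpoints), which is why I would phrase the inclusion as $(x_1\cdots x_{j-1})\,x_j\,(x_{j+1}\cdots x_n)\in S\setminus T$ with the convention that empty factors are dropped, rather than invoking $S^1$, to stay strictly within $S$.
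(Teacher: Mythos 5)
Your proof is correct and is essentially identical to the paper's: both partition a finite generating set $X$ of $S$ into $X\cap T$ and $X\cap(S\setminus T)$ and use the ideal property of $S\setminus T$ to conclude that any product involving a generator outside $T$ stays outside $T$, so $X\cap T$ generates $T$. The extra care you take with endpoint cases is fine but not needed beyond what the paper's one-line argument already conveys.
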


\begin{proof}
Let $J:=S\setminus T$. If $X$ is a finite generating set of $S$, then every product of elements of $X$ involving a factor from $X\cap J$ must belong to $J$. Therefore every element of $T$ is a product of factors from $X\cap T$, that is, $X\cap T$ generates $T$.
\end{proof}

\section{Two constructions}
\label{sec:constructions}

As mentioned in Section~\ref{sec:intro}, our characterization of locally $\K$-finite varieties uses the language of ``forbidden objects''. In this subsection, we describe two constructions that yield our ``forbidden objects'' as special instances.

Our first construction is as follows. Let $G$ be a group and let $H$ be a subgroup in $G$. Denote by $L_H(G)$ the union of $G$ with the set $G_H=\{gH\mid g\in G\}$ of the left cosets of $H$ in $G$ and define a multiplication on $L_H(G)$ by keeping products in $G$ and letting for all $g_1,g_2\in G$,
\[
g_1(g_2H):=g_1g_2H\ \text{ and }\ (g_1H)g_2=(g_1H)(g_2H):=g_1H.
\]
Note that we view the coset $gH$ as different from $g$ even if $H$ is the trivial subgroup $E:=\{1\}$! It is easy to check that $L_H(G)$ becomes a semigroup in which $G$ is the group of units and $G_H$ is an ideal consisting of left zeros. In the dual way, for every group $G$ and its subgroup $H$, we define the semigroup $R_H(G)$ which is the union of $G$, being the group of units, and the set ${}_HG=\{Hg\mid g\in G\}$ of the right cosets of $H$ in $G$, being an ideal that consists of right zeros. Thus, the multiplication on $R_H(G)$ extends the multiplication in $G$ and is such that for all $g_1,g_2\in G$,
\[
(Hg_1)g_2:=Hg_1g_2\ \text{ and }\ g_1(Hg_2)=(Hg_1)(Hg_2):=Hg_2.
\]

Semigroups of the form $L_H(G)$ and $R_H(G)$ first appeared in~\cite{Ras81} for the special case where $G$ is a finite cyclic group and $H=E$; on the other hand, the semigroups are one-sided instances of a more general ``two-sided'' construction from~\cite[Section~1]{PaVo96}.

Our second construction looks similar to the first but in fact results in objects with different properties. Let $G$ be a group and let $H$ be a subgroup in $G$. Denote by $L^\flat_H(G)$ the union of the set $L_H(G)$ with the set $\{0\}$ where $0$ is a fresh symbol and define a multiplication on $L^\flat_H(G)$ by keeping products in $G$, letting
\[
 g_1(g_2H):=g_1g_2H\ \text{ and }\ (g_1H)g_2:=g_1H
\]
for all $g_1,g_2\in G$, and setting all other products equal to $0$. As above, we view the coset $gH$ as different from $g$ even if $H=E$. It is easy to check that $L^\flat_H(G)$ becomes a semigroup in which $G$ is the group of units and the union $G_H\cup\{0\}$ forms an ideal being a zero semigroup.

Observe that even though $L^\flat_H(G)$ is equal to $L_H(G)\cup\{0\}$ as a set, the semigroup $L^\flat_H(G)$ is \textbf{not} the same as the semigroup $L_H(G)$ with 0 adjoined. For instance, $L^\flat_H(G)$ has only two idempotents, namely, 0 and 1 (the identity element of the group $G$), in contrast to $L_H(G)$ with 0 adjoined since in the latter semigroup, besides 0 and 1, the whole set $G_H$ consists of idempotents.

In the dual way, for every group $G$ and its subgroup $H$, we define the semigroup $R^\flat_H(G)$ which is the union of $G$, being the group of units, the set ${}_HG$ of the right cosets of $H$ in $G$, and the singleton $\{0\}$, the union of the latter two being an ideal and a zero semigroup. The multiplication on $R^\flat_H(G)$ extends the multiplication in $G$ and is such that for all $g_1,g_2\in G$,
\[
(Hg_1)g_2:=Hg_1g_2\ \text{ and }\ g_1(Hg_2):=Hg_2,
\]
while all other products are equal to $0$.

Semigroups of the form $L^\flat_H(G)$ and $R^\flat_H(G)$ first appeared in~\cite{PaVo06a} (for the special case where $G$ is a finite cyclic group and $H=E$) under the names $L_1(G)$ and respectively $R_1(G)$.

We need a few properties of semigroups $L_H(G)$ and $L^\flat_H(G)$. The first of them readily follows from the constructions.

\begin{Lemma}
\label{lm:r-inf}
Each of the semigroups $L_H(G)$ and $L^\flat_H(G)$ is generated by any generating set of $G$ together with the coset $H$ and has the singletons $\{gH\}$, $g\in G$, as separate $\gR$-classes. In particular, if $G$ is a Novikov--Adian group and $H$ is its subgroup of infinite index, the semigroups $L_H(G)$ and $L^\flat_H(G)$ are finitely generated and $\gR$-infinite.\qed
\end{Lemma}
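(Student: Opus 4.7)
The plan is to verify each assertion by a direct unpacking of the defining multiplications, in the order they appear in the statement. For the generation claim, let $X$ be any generating set of $G$ and set $Y := X \cup \{H\}$, where $H$ is read as the distinguished coset $1\cdot H$. Products of factors from $X$ reproduce all of $G$, and then the identity $g\cdot H = gH$ (valid in both constructions by the rule $g_1(g_2H)=g_1g_2H$) shows that every coset $gH$ is obtained by one further multiplication. In $L^\flat_H(G)$ the extra element $0$ is reached already as $H\cdot H$, since by the rule ``all other products equal $0$'' one has $(g_1H)(g_2H)=0$.

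For the $\gR$-class claim, both semigroups contain the identity $1\in G$, so $S^1=S$ and it suffices to compute $(gH)\,S$. In $L_H(G)$ the coset $gH$ is a left zero of $S$, whence $(gH)\,S=\{gH\}$; these principal right ideals are then pairwise distinct singletons, one per coset, so each $\{gH\}$ forms its own $\gR$-class. In $L^\flat_H(G)$ the same computation on elements of $G$, combined with $(gH)(g'H)=0$ and $(gH)\cdot 0=0$, yields $(gH)\,S^1=\{gH,0\}$. I would then check that no other element of the semigroup has this principal right ideal: any $x\in G$ satisfies $xS^1=S^1$, which has strictly more than two elements, while $0\cdot S^1=\{0\}$; and distinct cosets give distinct two-element sets $\{gH,0\}$. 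Hence each $\{gH\}$ is again a full $\gR$-class.

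The ``in particular'' clause is then immediate: a Novikov--Adian group is by definition finitely generated, so $X$ may be taken finite and $Y=X\cup\{H\}$ is a finite generating set of either semigroup; if $H$ has infinite index in $G$, the coset space $G_H$ is infinite, producing infinitely many pairwise distinct $\gR$-classes by the previous step. There is no genuine obstacle here; the proof is essentially bookkeeping with the multiplication tables. The only subtlety worth flagging is the convention emphasized earlier in the paper that the coset $H$ is treated as a semigroup element distinct from $1\in G$ even when $H=E$, which is precisely what forces $H$ to appear in the generating set and guarantees that the elements $gH$ are genuinely indexed by $G/H$.
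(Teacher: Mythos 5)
Your proof is correct and is exactly the routine verification the paper has in mind: the paper omits the proof entirely, remarking only that the lemma ``readily follows from the constructions,'' and your unpacking of the multiplication rules (including the observation that $0=H\cdot H$ in the flat case and the computation of the principal right ideals $(gH)S^1$) is the intended bookkeeping. No gaps.
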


We denote the semigroups $L_E(G)$, $R_E(G)$, $L^\flat_E(G)$, and $R^\flat_E(G)$ where $E$ is the trivial subgroup by respectively $ L(G)$, $R(G)$, $L^\flat(G)$, and $R^\flat(G)$.

Now we are in a position to exhibit a locally $\gD$-finite variety which is neither locally $\gR$-finite nor locally $\gL$-finite and a finitely generated $\gD$-finite semigroup which is neither $\gR$-finite nor $\gL$-finite. (This is the example announced in Subsection~\ref{subsec:elementary}: it shows that the converse of either of the claims of Lemma~\ref{lm:inclusions}(ii) fails.)
\begin{Example}
\label{ex:cr}
Let $n\in\mathbb{N}$ be such that the Burnside variety $\B_n$ is not locally finite (for instance, any odd $n\ge 665$ will work, cf. \cite{Adi79}). Then the variety $\CR_n$ of the completely regular semigroups being unions of groups from $\B_n$ is locally $\gD$-finite by Corollary~\ref{cor:cr-ljf}. On the other hand, if $G\in\B_n$ is a Novikov--Adian group, then the semigroups $L(G)$ and $R(G)$ lie in $\CR_n$. By Lemma~\ref{lm:r-inf} $L(G)$ is finitely generated and $\gR$-infinite, and by the dual of this lemma $R(G)$ is finitely generated and $\gL$-infinite. Hence $\CR_n$ is neither locally $\gR$-finite nor locally $\gL$-finite. Moreover, the direct product $L(G)\times R(G)$ is easily seen to be a finitely generated $\gD$-finite semigroup which is neither $\gR$-finite nor $\gL$-finite.
\end{Example}

\begin{Lemma}
\label{lm:subgroups}
Let $G$ be a group and let $H$ and $K$ be subgroups in $G$ with $H\subseteq K$. Then $L_K(G)$ is a homomorphic image of $L_H(G)$ and $L^\flat_K(G)$ is a homomorphic image of $L^\flat_H(G)$.
\end{Lemma}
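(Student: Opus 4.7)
The plan is to write down explicit surjective homomorphisms and verify them case by case; no deep idea is needed because the map is essentially forced by the coset inclusion.

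For the first claim, I define $\varphi\colon L_H(G)\to L_K(G)$ by $\varphi(g):=g$ for $g\in G$ and $\varphi(gH):=gK$ for $g\in G$. The first step is to check that $\varphi$ is well-defined on cosets: if $g_1H=g_2H$ then $g_1^{-1}g_2\in H\subseteq K$, hence $g_1K=g_2K$. Surjectivity is immediate because every element of $L_K(G)$, whether a group element or a $K$-coset, lies in the image. For the homomorphism property there are four types of products to check, matching the four cases in the definition of the multiplication in $L_H(G)$: the group-group product is preserved since $\varphi|_G$ is the identity, and the remaining three products
\[
g_1\cdot(g_2H)=g_1g_2H,\qquad (g_1H)\cdot g_2=g_1H,\qquad (g_1H)(g_2H)=g_1H
\]
are mapped respectively to $g_1g_2K$, $g_1K$, $g_1K$, which coincide with the corresponding products in $L_K(G)$.

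For the second claim, I extend $\varphi$ to $\varphi^\flat\colon L^\flat_H(G)\to L^\flat_K(G)$ by further setting $\varphi^\flat(0):=0$. The verifications above carry over for the products that do not involve $0$, with one modification: in $L^\flat_H(G)$ the product $(g_1H)(g_2H)$ equals $0$ rather than $g_1H$, and this is sent to $0=(g_1K)(g_2K)$, the corresponding product in $L^\flat_K(G)$. Any product involving $0$ on either side, or any other product declared to be $0$ in $L^\flat_H(G)$, is mapped to $0$, which is exactly the value of the corresponding product in $L^\flat_K(G)$ (since the ideal $G_K\cup\{0\}$ is likewise a zero semigroup). Surjectivity is again clear.

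There is no genuine obstacle; the only point requiring any care is the bookkeeping distinguishing the coset $gH$ (respectively $gK$) from the group element $g$, which is harmless here because $\varphi^\flat$ separately sends group elements to group elements and cosets to cosets.
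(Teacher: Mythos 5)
Your proof is correct and follows exactly the same route as the paper: the same map $\varphi(g):=g$, $\varphi(gH):=gK$ (extended by $\varphi(0):=0$ in the flat case), with the paper leaving the case-by-case verification to the reader. Your explicit check of well-definedness on cosets (via $H\subseteq K$) is a detail the paper omits but is a worthwhile addition.
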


\begin{proof}
Define the map $\varphi_{H,K}\colon L_H(G)\to L_K(G)$ by letting, for every $g\in G$,
\[
\varphi_{H,K}(g):=g, \quad \varphi_{H,K}(gH):=gK.
\]
It is easy to see that $\varphi_{H,K}$ is a homomorphism of $L_H(G)$ onto $L_K(G)$. In the ``flat'' case, it suffices to extend $\varphi_{H,K}$ to $L^\flat_H(G)$ by letting $\varphi_{H,K}(0):=0$. The so extended map is then routinely verified to be a homomorphism of $L^\flat_H(G)$ onto $L^\flat_K(G)$.
\end{proof}

\begin{Lemma}
\label{lm:conjugate}
Let $G$ be a group and let $H$ and $K$ be conjugate subgroups in $G$. Then $L_H(G)$ and $L_K(G)$ are isomorphic, and so are  $L^\flat_H(G)$ and $L^\flat_K(G)$.
\end{Lemma}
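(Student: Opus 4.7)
The plan is to build an explicit semigroup isomorphism $\varphi\colon L_H(G)\to L_K(G)$ by extending the identity on the group of units via a $G$-equivariant bijection between the coset spaces, and then to extend this to the flat variants by mapping zero to zero.

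Write $K=g^{-1}Hg$ for some fixed $g\in G$. The key observation is that although $H$ and $K$ need not be comparable (so the homomorphism of Lemma~\ref{lm:subgroups} is unavailable), the left coset spaces $G_H$ and $G_K$ are isomorphic as $G$-sets under left translation, because the $G$-stabilizer of a base coset determines the action up to isomorphism, and $H$ and $K$ are conjugate. Concretely, I would define
\[
\varphi(x):=x \text{ for every } x\in G, \qquad \varphi(xH):=xgK \text{ for every } xH\in G_H.
\]
First I would check well-definedness of $\varphi$ on $G_H$: if $xH=x'H$, then $x^{-1}x'\in H$, hence $(xg)^{-1}(x'g)=g^{-1}(x^{-1}x')g\in g^{-1}Hg=K$, so $xgK=x'gK$. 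Injectivity on cosets is the same calculation read backwards, and surjectivity onto $G_K$ is immediate since every coset $yK$ equals $(yg^{-1})gK=\varphi(yg^{-1}H)$. Combined with the identity on $G$, this gives a bijection $L_H(G)\to L_K(G)$.

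Next I would verify that $\varphi$ preserves all four kinds of products dictated by the construction of $L_H$ and $L_K$. Products inside $G$ are trivial. For $x_1\in G$ and $x_2H\in G_H$,
\[
\varphi(x_1\cdot x_2H)=\varphi(x_1x_2H)=x_1x_2gK=x_1\cdot(x_2gK)=\varphi(x_1)\varphi(x_2H).
\]
For $x_1H\in G_H$ and $x_2\in G$ (or $x_2H\in G_H$), both sides collapse to $\varphi(x_1H)=x_1gK$ because left cosets act as left zeros in both $L_H(G)$ and $L_K(G)$. Thus $\varphi$ is an isomorphism.

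For the flat case, I would set $\varphi(0):=0$ and reuse the formulas above on the remaining elements. The only additional multiplication rule to check is $(x_1H)(x_2H)=0$ in $L_H^\flat(G)$, which is mirrored by $(x_1gK)(x_2gK)=0$ in $L_K^\flat(G)$, and the sink equations $0s=s0=0$, which are trivially preserved. I expect no genuine obstacle: once the correct formula $\varphi(xH)=xgK$ is identified (the right-translation factor $g$ is needed to make the well-definedness calculation $g^{-1}Hg\subseteq K$ work), the rest is a mechanical verification of four multiplication tables.
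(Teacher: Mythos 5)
Your proof is correct and follows essentially the same route as the paper: an explicit bijection between $L_H(G)$ and $L_K(G)$ (respectively their flat variants) verified against the four types of products, with the well-definedness of the coset map being the only point that needs the conjugacy hypothesis. The only cosmetic difference is that you fix the group of units pointwise and right-translate coset representatives by the conjugating element, whereas the paper conjugates the group of units as well; your normalization makes the well-definedness computation $g^{-1}(x^{-1}x')g\in K$ particularly transparent.
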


\begin{proof}
Let $y\in G$ be such that $K=yHy^{-1}$. Define a map $\varphi\colon L_H(G)\to L_K(G)$ as follows: for all $g\in G$,
\begin{equation}
\label{eq:conjugate}
\varphi(g):=ygy^{-1},\quad \varphi(gH):=ygK.
\end{equation}
Clearly, $\varphi$ is a bijection. Take any $g_1,g_2\in G$. Then we have
\begin{gather*}
\varphi(g_1g_2)=yg_1g_2y^{-1}=yg_1y^{-1}yg_2y^{-1}=\varphi(g_1)\varphi(g_2),\\
\varphi(g_1(g_2H))=yg_1g_2K=yg_1y^{-1}(yg_2K)=\varphi(g_1)\varphi(g_2H),\\
\varphi((g_1H)g_2)=yg_1K=(yg_1K)yg_2y^{-1}=\varphi(g_1H)\varphi(g_2),\\
\varphi((g_1H)(g_2H))=yg_1K=(yg_1K)(yg_2K)=\varphi(g_1H)\varphi(g_2H).
\end{gather*}
Hence $\varphi$ is an isomorphism.

For the ``flat'' case, we define $\varphi\colon L^\flat_H(G)\to L^\flat_K(G)$ keeping the rules~\eqref{eq:conjugate} and adding the rule $\varphi(0):=0$. Again, $\varphi$ is a bijection, and the verification that $\varphi$ respects the multiplication works the same, except the last line that should be substituted by the following two:
\begin{gather*}
\varphi((g_1H)(g_2H))=\varphi(0)=0=(yg_1K)(yg_2K)=\varphi(g_1H)\varphi(g_2H),\\
\varphi(xy)=\varphi(0)=0=\varphi(x)\varphi(y)\ \text{ whenever $x=0$ or $y=0$}.
\end{gather*}
Hence $\varphi$ is an isomorphism also in this case.
\end{proof}

\begin{Lemma}
\label{lm:intersection}
Let $G$ be a group, let $H_i$, $i\in I$, be subgroups in $G$, and $H:=\bigcap_{i\in I}H_i$. Then the semigroup $L_H(G)$ is a subdirect product of the semigroups $L_{H_i}(G)$, $i\in I$, and the semigroup $L^\flat_H(G)$ is a subdirect product of the semigroups $L^\flat_{H_i}(G)$, $i\in I$.
\end{Lemma}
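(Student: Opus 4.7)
The plan is to use the family of homomorphisms supplied by Lemma~\ref{lm:subgroups} and assemble them into a single embedding of $L_H(G)$ (respectively $L^\flat_H(G)$) into the product of the $L_{H_i}(G)$ (respectively $L^\flat_{H_i}(G)$). Since $H \subseteq H_i$ for every $i \in I$, Lemma~\ref{lm:subgroups} provides surjective homomorphisms $\varphi_i := \varphi_{H,H_i} \colon L_H(G) \to L_{H_i}(G)$ acting by $\varphi_i(g) = g$ on $G$ and $\varphi_i(gH) = gH_i$ on cosets (and $\varphi_i(0) = 0$ in the flat case). I would then define
\[
\psi \colon L_H(G) \to \prod_{i\in I} L_{H_i}(G), \qquad \psi(x) := (\varphi_i(x))_{i\in I},
\]
and note that $\psi$ is a homomorphism whose composition with each canonical projection equals the surjective map $\varphi_i$; hence, once injectivity is established, $\psi$ exhibits $L_H(G)$ as a subdirect product, as desired.

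The injectivity check reduces to a short case analysis on the two (respectively three) strata of $L_H(G)$. If $x,y \in G$, then already $\varphi_i(x) = x$ and $\varphi_i(y) = y$ force $x = y$; if exactly one of $x,y$ lies in $G$, their images in any $L_{H_i}(G)$ distinguish them, since the group of units and the ideal of left cosets are disjoint. The essential case is $x = g_1H$ and $y = g_2H$: then $\psi(x) = \psi(y)$ means $g_1H_i = g_2H_i$ for every $i$, equivalently $g_2^{-1}g_1 \in H_i$ for every $i$, whence $g_2^{-1}g_1 \in \bigcap_{i\in I} H_i = H$ and so $g_1H = g_2H$. This last step is the only place where the hypothesis $H = \bigcap_i H_i$ is actually used, and it is the whole content of the lemma.

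For the flat version one defines $\psi$ by the same formula, extended by $\psi(0) = (0,0,\dots)$. The only additional case to examine is when one of $x,y$ equals $0$: here the coordinate $\varphi_i$ sends $0$ to $0$ while mapping $G \cup G_H$ into $L_{H_i}(G) \setminus \{0\}$, so $\psi(x) = \psi(y)$ with $x = 0$ forces $y = 0$. No step should present a real obstacle; the argument is essentially the observation that the family $\{\varphi_{H,H_i}\}$ already separates points of $L_H(G)$ precisely because cosets of an intersection of subgroups are detected coordinatewise.
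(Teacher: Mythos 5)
Your proposal is correct and follows essentially the same route as the paper: both use the surjections $\varphi_{H,H_i}$ from Lemma~\ref{lm:subgroups} and reduce the separation of points to the observation that distinct cosets of $H=\bigcap_i H_i$ are distinguished by some $H_i$. The only cosmetic difference is that you phrase this as injectivity of the induced map into the product, while the paper phrases it as the family $\{\varphi_i\}$ separating elements.
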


\begin{proof}
For each $i\in I$, let $\varphi_i:=\varphi_{H,H_i}$ where $\varphi_{H,H_i}\colon L_H(G)\to L_{H_i}(G)$ is the homomorphism of $L_H(G)$ onto $L_{H_i}(G)$ defined in the proof of Lemma~\ref{lm:subgroups}. In order to show that the homomorphisms $\varphi_i$ separate the elements of $L_H(G)$, it clearly suffices to verify that the homomorphisms separate any two different left cosets of $H$ in $G$. If $g_1H\ne g_2H$, we have $g_1^{-1}g_2\notin H$ whence $g_1^{-1}g_2\notin H_i$ for some $i\in I$. Then we get $g_1H_i\ne g_2H_i$, that is, $\varphi_i(g_1H)\ne\varphi_i(g_2H)$.

In the ``flat'' case, the same argument works.
\end{proof}

\begin{Lemma}
\label{lm:quotient}
Let $G$ be a group, let $N$ be a normal subgroup in $G$, and $\ovl{G}:=G/N$. Then the semigroup $L(\ovl{G})$ is a homomorphic image of the semigroup $L_N(G)$, and the semigroup $L^\flat(\ovl{G})$ is a homomorphic image of the semigroup $L^\flat_N(G)$.
\end{Lemma}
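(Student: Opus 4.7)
The plan is to build the desired homomorphism $\psi\colon L_N(G)\to L(\ovl{G})$ directly from the canonical projection $\pi\colon G\to\ovl{G}=G/N$. On the group part, set $\psi(g):=\pi(g)$; on the ideal part, send the coset $gN\in G_N$ (viewed as an element of $L_N(G)$) to the element $\pi(g)E$ of the ideal of $L(\ovl{G})$, where $E$ denotes the trivial subgroup of $\ovl{G}$. The key subtlety to keep straight is that in $L_N(G)$ the symbol $gN$ is an element of the ideal $G_N$, while in $L(\ovl{G})$ the symbol $\pi(g)E$ is the formal coset attached to the element $\pi(g)$ of $\ovl{G}$; these are different semigroups, but $\pi$ naturally matches them up.

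Next I would verify that $\psi$ is well-defined: if $gN=g'N$ as elements of $G_N$, then $g^{-1}g'\in N$, hence $\pi(g)=\pi(g')$ and so $\pi(g)E=\pi(g')E$. Surjectivity is immediate from the surjectivity of $\pi$. The homomorphism property reduces to four routine cases that mirror the defining rules of the construction: products $g_1g_2$ in $G$ go through since $\pi$ is a group homomorphism; products of the form $g_1\cdot(g_2N)=g_1g_2N$ are preserved because $\pi(g_1)\cdot(\pi(g_2)E)=\pi(g_1g_2)E$; and products $(g_1N)g_2=(g_1N)(g_2N)=g_1N$ are preserved because in $L(\ovl{G})$ we likewise have $(\pi(g_1)E)\pi(g_2)=(\pi(g_1)E)(\pi(g_2)E)=\pi(g_1)E$ (the right-hand cofactor is simply absorbed in both semigroups).

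For the flat case, I would extend $\psi$ to $\psi^\flat\colon L^\flat_N(G)\to L^\flat(\ovl{G})$ by declaring $\psi^\flat(0):=0$. Well-definedness and surjectivity are inherited. The only additional checks are those involving $0$: products with a factor $0$ are sent to $0$ on both sides by definition, and the new nonzero-to-zero case is $(g_1N)(g_2N)=0$ in $L^\flat_N(G)$, which is matched by $(\pi(g_1)E)(\pi(g_2)E)=0$ in $L^\flat(\ovl{G})$ (recall the flat construction annihilates any product of two coset symbols). All other nonzero products are handled exactly as in the non-flat case.

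There is no real obstacle here; the statement is essentially bookkeeping that the natural projection through $\pi$ respects the two ``layered'' constructions. The one place to be careful is the flat version, where one must remember that $(g_1N)(g_2N)=0$ rather than $g_1N$, so that the image rule $\psi^\flat(0)=0$ is forced and consistent with the corresponding vanishing product in $L^\flat(\ovl{G})$.
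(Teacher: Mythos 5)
Your proposal is correct and follows essentially the same route as the paper: both define the map by $g\mapsto \ovl{g}$ and $gN\mapsto \ovl{g}E$ (with $0\mapsto 0$ in the flat case) and verify it is an onto homomorphism. The paper leaves the case checks as routine, whereas you spell them out, including the well-definedness on cosets and the matching of $(g_1N)(g_2N)=0$ with $(\ovl{g_1}E)(\ovl{g_2}E)=0$ in the flat version; this is just a more explicit rendering of the same argument.
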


\begin{proof}
For $g\in G$ let $\ovl{g}$ stand for the image of $g$ under the natural homomorphism $G\to\ovl{G}$. We extend this homomorphism to a map $\varphi\colon L_{N}(G)\to L(\ovl{G})$ by letting $\varphi(gN)=\ovl{g}E$ where $E$ stands for the trivial subgroup of $\ovl{G}$. (Recall that we have adopted the convention that the coset $\ovl{g}E$ is viewed as different from $\ovl{g}$.) It is clear that $\varphi$ is onto, and it is easy to check that $\varphi$ is a homomorphism.

For the ``flat'' case, we only have to extend $\varphi$ by letting $\varphi(0):=0$.
\end{proof}

We are ready to prove a reduction result that simplifies our characterization of the locally $\K$-finite varieties. Given a semigroup $S$, let $\var S$ stand for the variety generated by $S$.

\begin{Proposition}
\label{prop:reduction}
Let $G$ be a Novikov--Adian group and let $H$ be its subgroup of infinite index. Then the variety $\var{L_{H}(G)}$ contains a semigroup of the form $L(\ovl{G})$ and the variety $\var{L^\flat_{H}(G)}$ contains a semigroup of the form $L^\flat(\ovl{G})$ where $\ovl{G}$ is a Novikov--Adian quotient of the group $G$.
\end{Proposition}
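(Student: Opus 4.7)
The strategy is to identify a normal subgroup $N$ of $G$ such that the semigroup $L_N(G)$ already lies in $\var{L_H(G)}$; then Lemma~\ref{lm:quotient} applied to $N$ will automatically deliver $L(\ovl{G})\in\var{L_H(G)}$ for $\ovl{G}:=G/N$, and similarly in the flat case. The natural candidate is the \emph{core} of $H$, namely $N:=\bigcap_{g\in G}gHg^{-1}$, which is the largest normal subgroup of $G$ contained in $H$.

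First I would verify $L_N(G)\in\var{L_H(G)}$. By Lemma~\ref{lm:conjugate}, for every $g\in G$ the semigroup $L_{gHg^{-1}}(G)$ is isomorphic to $L_H(G)$, so all of them lie in $\var{L_H(G)}$. Since $N=\bigcap_{g\in G}gHg^{-1}$, Lemma~\ref{lm:intersection} exhibits $L_N(G)$ as a subdirect product of the semigroups $L_{gHg^{-1}}(G)$, $g\in G$, and therefore $L_N(G)\in\var{L_H(G)}$. Now $N$ is normal, so Lemma~\ref{lm:quotient} produces $L(\ovl{G})$ as a homomorphic image of $L_N(G)$, whence $L(\ovl{G})\in\var{L_H(G)}$.

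The remaining task, and the only point that requires a little care, is to check that $\ovl{G}=G/N$ really is a Novikov--Adian group. Being a quotient of the finitely generated group $G$, the group $\ovl{G}$ is itself finitely generated; being a quotient of a group of finite exponent $n$, it satisfies the same identity $x^n=1$ and so has finite exponent. The one delicate point is that $\ovl{G}$ must be \textbf{infinite}: this is the potential obstacle. It is, however, easy to resolve, because $N\subseteq H$ implies $[G:N]\geq[G:H]$, and the latter index is infinite by hypothesis. Hence $\ovl{G}$ is infinite and therefore a Novikov--Adian group.

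Finally, the ``flat'' case requires no separate argument: Lemmas~\ref{lm:conjugate},~\ref{lm:intersection}, and~\ref{lm:quotient} all have flat versions already contained in their statements, so the very same chain of reductions with $L$ replaced by $L^\flat$ throughout shows $L^\flat(\ovl{G})\in\var{L^\flat_H(G)}$ for the same Novikov--Adian quotient $\ovl{G}=G/N$.
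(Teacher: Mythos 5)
Your proposal is correct and follows essentially the same route as the paper: both take $N$ to be the core $\bigcap_{y\in G}yHy^{-1}$ of $H$, realize $L_N(G)$ as a subdirect product of the conjugates $L_{yHy^{-1}}(G)\cong L_H(G)$ via Lemmas~\ref{lm:intersection} and~\ref{lm:conjugate}, and then apply Lemma~\ref{lm:quotient}. Your explicit check that $\ovl{G}=G/N$ is a Novikov--Adian group (infinite because $[G:N]\ge[G:H]$) spells out a point the paper dispatches with ``obviously,'' but the argument is identical.
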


\begin{proof}
Let $N:=\bigcap_{y\in G}yHy^{-1}$. Obviously, the subgroup $N$ is normal and of infinite index in $G$. By Lemma~\ref{lm:intersection}, the semigroup $L_N(G)$ is a subdirect product of the semigroups $L_{yHy^{-1}}(G)$, $y\in G$, and by Lemma~\ref{lm:conjugate} each of these semigroups is isomorphic to the semigroup $L_{H}(G)$. Hence $L_N(G)$ belongs to the variety $\var{L_{H}(G)}$. If we denote the quotient group $G/N$ by $\ovl{G}$, then $\ovl{G}$ is a Novikov--Adian group and, by Lemma~\ref{lm:quotient}, the semigroup $L(\ovl{G})$ is a homomorphic image of $L_{N}(G)$ whence $L(\ovl{G})$ also belongs to $\var{L_{H}(G)}$. The same argument applies to the ``flat'' case.
\end{proof}

We formulate and prove the next result for semigroups of the form $L^\flat(G)$ only. (Its analogue for semigroups of the form $L(G)$ holds true but we do not need it.)

\begin{Lemma}
\label{lm:nag0}
If a variety $\V$ contains a semigroup of the form $L^\flat(G_0)$ where $G_0$ is a Novikov--Adian group, then $\V$ contains also a semigroup of the form $L^\flat(G)$ where $G$ is a Novikov--Adian group such that some infinite finitely generated subgroup $H$ of $G$ and some $y\in G$ satisfy $H\cap y^{-1}Hy=\{1\}$.
\end{Lemma}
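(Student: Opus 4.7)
My plan is to lift Lemma~\ref{lm:nag}'s group-theoretic construction to the level of $L^\flat$-semigroups, using the machinery of Lemmas~\ref{lm:subgroups}--\ref{lm:quotient}. Since $G_0$ is the group of units of $L^\flat(G_0)$, the hypothesis forces $G_0\in\V$, hence $\var G_0\subseteq\V$. Let $g_1,\ldots,g_n$ generate $G_0$, let $F$ be the relatively free group in $\var G_0$ on $x_0,x_1,\ldots,x_n$, put $H_0:=\langle x_1,\ldots,x_n\rangle\le F$ and $K:=H_0\cap x_0^{-1}H_0x_0$. The proof of Lemma~\ref{lm:nag} already establishes that $K$ is central in $F$, that $G:=F/K$ is a Novikov--Adian group, that $H:=H_0/K$ is an infinite finitely generated subgroup, and that $y:=x_0K$ satisfies $H\cap y^{-1}Hy=\{1\}$. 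So the entire task reduces to showing $L^\flat(G)\in\V$.

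The crucial step is to verify that $L^\flat(F)\in\V$. Every group homomorphism $\varphi\colon F\to G_0$ extends to a semigroup homomorphism $L^\flat(\varphi)\colon L^\flat(F)\to L^\flat(G_0)$ via $f\mapsto\varphi(f)$, $fE\mapsto\varphi(f)E$ and $0\mapsto 0$; the four defining product rules of $L^\flat$ are preserved by a direct inspection. Collecting these maps yields a single homomorphism
\[
\Phi\colon L^\flat(F)\longrightarrow L^\flat(G_0)^{\mathrm{Hom}(F,G_0)}.
\]
Because $F$ is relatively free in $\var G_0$, the family $\{\varphi\colon F\to G_0\}$ separates the points of $F$, and thus the corresponding family $\{L^\flat(\varphi)\}$ separates the points of $L^\flat(F)$: the three ``strata'' (group units, left cosets, zero) are preserved by every $L^\flat(\varphi)$ and are already distinguished inside each factor $L^\flat(G_0)$, while the separation within each stratum follows from the separation of points in $F$. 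Hence $\Phi$ is an embedding and $L^\flat(F)\in\V$.

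From here the argument is a routine assembly of the preceding lemmas. Lemma~\ref{lm:subgroups} applied to $E\subseteq H_0$ yields $L^\flat_{H_0}(F)\in\V$, whence Lemma~\ref{lm:conjugate} gives $L^\flat_{yH_0y^{-1}}(F)\in\V$ for every $y\in F$. The centrality of $K$ inside $H_0\subseteq F$ forces $K=\bigcap_{y\in F}yH_0y^{-1}$: the inclusion $\supseteq$ comes from taking $y=1$ and $y=x_0^{-1}$ in the definition of $K$, while $K\subseteq yH_0y^{-1}$ follows from $K=yKy^{-1}\subseteq yH_0y^{-1}$. So Lemma~\ref{lm:intersection} delivers $L^\flat_K(F)\in\V$, and finally Lemma~\ref{lm:quotient} gives $L^\flat(G)=L^\flat(F/K)\in\V$, completing the proof. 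The only non-routine step, and the main obstacle, is the embedding $\Phi$: once its well-definedness and injectivity are confirmed, the rest of the proof is a chain of invocations of Lemmas~\ref{lm:subgroups}--\ref{lm:quotient}.
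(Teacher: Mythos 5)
Your proof is correct, and its central step --- realizing $L^\flat(F)$ as a subdirect power of $L^\flat(G_0)$ via the separating family of homomorphisms $F\to G_0$ supplied by the relative freeness of $F$ in $\var G_0$, and then importing the group-theoretic facts about $K$, $H_0/K$ and $x_0K$ from the proof of Lemma~\ref{lm:nag} --- is exactly the paper's argument. The only divergence is the final descent from $L^\flat(F)$ to $L^\flat(F/K)$: the paper simply applies its opening observation that every onto group homomorphism extends to an onto semigroup homomorphism of the corresponding $L^\flat$-semigroups, taking the natural map $F\to F/K$, whereas you route through Lemmas~\ref{lm:subgroups}, \ref{lm:conjugate}, \ref{lm:intersection} and~\ref{lm:quotient} using the identity $K=\bigcap_{y\in F}yH_0y^{-1}$, which your centrality argument correctly justifies. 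That detour is valid --- it is in effect a rerun of Proposition~\ref{prop:reduction} --- but the direct one-line extension of the quotient homomorphism does the same job.
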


\begin{proof}
Observe that, for any groups $G_1,G_2$, each onto homomorphism $\varphi\colon G_1\to G_2$ extends in an obvious way to an onto homomorphism $\ovl{\varphi}\colon L^\flat(G_1)\to L^\flat(G_2)$: we let $\ovl{\varphi}(g):=\varphi(g)$, $\ovl{\varphi}(gE):=\varphi(g)E$ for every $g\in G_1$, and $\ovl{\varphi}(0):=0$.

Now we follow the proof of Lemma~\ref{lm:nag}. Let the group $G_0$ be generated by $g_1,\dots,g_n$. Denote by $F$ the group freely generated by the elements $x_0,x_1,\dots,x_n$ in the variety generated by $G_0$. The group $F$ is known to be approximated by $G_0$ which means that there is a family $\Phi$ of homomorphisms from $F$ onto $G_0$ such that, for every pair $f_1,f_2$ of distinct elements in $F$, there exists a homomorphism $\varphi\in\Phi$ with $\varphi(f_1)\ne\varphi(f_2)$. Each homomorphism $\varphi\in\Phi$ can be extended to an onto homomorphism $\ovl{\varphi}\colon L^\flat(F)\to L^\flat(G_0)$, and it is clear that the homomorphisms $\ovl{\varphi}$ where $\varphi$ runs over $\Phi$ separate the elements of $L^\flat(F)$. Thus, $L^\flat(F)$ is a subdirect power of the semigroup $L^\flat(G_0)$, whence $L^\flat(F)$ lies in the variety $\V$. As shown in the proof of Lemma~\ref{lm:nag}, if $H_0$ is the subgroup of the group $F$ generated by $x_1,\dots,x_n$ and $K:=H_0\cap x_0^{-1}H_0x_0$, then $K$ is a normal subgroup of $F$, and the quotient group $G:=F/K$ is a Novikov--Adian group that possesses an infinite finitely generated subgroup $H$ (namely, $H_0/K$) and an element $y$ (namely, $x_0K$) satisfying $H\cap y^{-1}Hy=\{1\}$. The natural homomorphism $F\to G$ extends to an onto homomorphism $L^\flat(F)\to L^\flat(G)$, whence the semigroup $L^\flat(G)$ lies in $\V$.
\end{proof}

The next result, unlike the previous one, holds only for semigroups of the form $L^\flat(G)$ while its analogue for semigroups of the form $L(G)$ does not hold in general.

\begin{Lemma}
\label{lm:right-left}
For every group $G$, the semigroup $R^\flat(G)$ divides $L^\flat(G)\times L^\flat(G)$.
\end{Lemma}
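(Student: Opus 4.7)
The plan is to exhibit a subsemigroup $T$ of $L^\flat(G) \times L^\flat(G)$ together with a surjective homomorphism $\varphi\colon T \to R^\flat(G)$. The guiding observation is that a right coset $Eg \in R^\flat(G)$ must simultaneously absorb any group element on the left ($h \cdot Eg = Eg$) and transform under right multiplication as $Eg \cdot h = E(gh)$; no single element of $L^\flat(G)$ exhibits this combined behaviour, so the encoding is forced to use the two coordinates cooperatively: one coordinate will supply the left-absorption (via a left coset $aE$, which absorbs on the right in $L^\flat(G)$), while the other will carry the label $g$ as a genuine group element (so that right multiplication updates it correctly).

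Concretely, I take $T$ to be the disjoint union of three families: \emph{(A)} diagonal pairs $(g,g)$ with $g \in G$, which will represent $g \in R^\flat(G)$; \emph{(B)} pairs $(aE, ag)$ with $a,g \in G$, which will all represent $Eg$; and \emph{(C)} pairs $(0,c)$ with $c \in G$, which will all represent $0$. I then define $\varphi$ by $\varphi(g,g) := g$, $\varphi(aE, ag) := Eg$, and $\varphi(0,c) := 0$. Since $E$ is the trivial subgroup, the formal coset $aE$ determines $a$ uniquely, hence the ``invariant'' $g = a^{-1}(ag)$ is recovered from the pair $(aE, ag)$; so $\varphi$ is unambiguously defined and obviously surjective onto $R^\flat(G)$.

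The remaining work is a routine case analysis of the product combinations, which simultaneously shows that $T$ is closed under componentwise multiplication and that $\varphi$ respects it. The essential cases are $(g,g)(aE, ag') = (gaE, gag')$, of type (B) with invariant $g'$, matching $g \cdot Eg' = Eg'$; dually, $(aE, ag)(g',g') = (aE, agg')$, of type (B) with invariant $gg'$, matching $Eg \cdot g' = Egg'$; and the product of two type (B) pairs, namely $(aE, ag)(a'E, a'g') = (0, aga'g')$, lies in type (C), matching $Eg \cdot Eg' = 0$. The remaining cases, especially those involving type (C), are immediate and all correspond to the zero behaviour in $R^\flat(G)$.

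The main conceptual obstacle is recognising that a \emph{direct} embedding $R^\flat(G) \hookrightarrow L^\flat(G) \times L^\flat(G)$ cannot exist because of the absorption/translation incompatibility described above, so a proper quotient is unavoidable: the auxiliary parameter $a$ in the representation $(aE, ag)$ of $Eg$ must be washed out by $\varphi$. Once one identifies $g = a^{-1}(ag)$ as the invariant of the diagonal left action of $G$ on pairs of left cosets, the construction essentially writes itself and the rest reduces to bookkeeping.
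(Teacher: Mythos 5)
Your proof is correct and is essentially the paper's own argument: your subsemigroup $T$ (diagonal pairs, pairs $(aE,ag)$, and pairs $(0,c)$) coincides with the paper's $T=\{(g,g)\mid g\in G\}\cup(\ovl{G}\times G)\cup(\{0\}\times G)$, and your map sending $(aE,ag)$ to $Eg$ is exactly the paper's $\varphi((\ovl{g},h)):=\wtil{g^{-1}h}$, with the same case analysis. (Your motivational aside that no direct embedding $R^\flat(G)\hookrightarrow L^\flat(G)\times L^\flat(G)$ exists is inessential to the argument and, as stated for every $G$, fails for the trivial group, where the two semigroups are isomorphic.)
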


\begin{proof}
We adapt the proof of \cite[Lemma 2.5]{PaVo05}.

Recall that $L^\flat(G)$ has as base set the union of $G$ with the set of the left cosets of the trivial subgroup $E:=\{1\}$ in $G$ and the singleton $\{0\}$. In the sequel we denote the left coset $gE$ by $\ovl{g}$ and the set of all these cosets by $\ovl{G}$. In this notation the multiplication in $L^\flat(G)$ becomes as follows: it extends the multiplication in $G$,
\[
g\ovl{h}:=\ovl{gh},\quad \ovl{g}h:=\ovl{g},
\]
for all $g,h\in G$, and all other products are equal to $0$.

Similarly, by denoting the right coset $Eg$ by $\wtil{g}$, we can describe the base set of $R^\flat(G)$ as $G \cup \{ \wtil{g} \mid g \in G \} \cup \{ 0 \}$. In this case the multiplication extends the multiplication in $G$,
\[
g\wtil{h}:=\wtil{h},\quad \wtil{g}h:=\wtil{gh},
\]
for all $g,h\in G$, and all other products are equal to $0$.

Let $T \subset L^\flat(G) \times L^\flat(G)$ be defined by
\[
T:= \{ (g,g) \mid g \in G \} \cup (\ovl{G} \times G) \cup (\{ 0\} \times G).
\]
It is straightforward to check that $T$ is a subsemigroup of $L^\flat(G)\times L^\flat(G)$.

Let the map $\varphi\colon T \to R^\flat(G)$ be defined by
\[
\varphi((g,g)):= g,\quad \varphi((\ovl{g},h)):= \wtil{g^{-1}h}, \quad \varphi((0,g)):= 0
\]
for $g,h \in G$. Obviously, $\varphi$ is onto. It it easy to check that $\varphi$ is a homomorphism. Indeed, we have
\begin{gather*}
\varphi((g,g)(\ovl{h},k)) = \varphi((\ovl{gh},gk)) = \wtil{h^{-1}k} = g\wtil{h^{-1}k} = \varphi((g,g))\varphi((\ovl{h},k)),\\
\varphi((\ovl{h},k)(g,g)) = \varphi((\ovl{h},kg)) = \wtil{h^{-1}kg} = \wtil{h^{-1}k}g = \varphi((\ovl{h},k))\varphi((g,g)),\\
\varphi((\ovl{g},h)(\ovl{k},\ell)) = \varphi((0,h\ell)) = 0 = \wtil{g^{-1}h}\wtil{k^{-1}\ell} = \varphi((\ovl{g},h))\varphi((\ovl{k},\ell))
\end{gather*}
for all $g,h,k,\ell \in G$, and the remaining cases are immediate. Hence $\varphi$ is an onto homomorphism whence $R^\flat(G)$ is a homomorphic image of $L^\flat(G)\times L^\flat(G)$.
\end{proof}

The following result now follows immediately from Lemma \ref{lm:right-left} and its dual.

\begin{Corollary}
\label{cor:same_variety}
The equality $\var{R^\flat(G)} = \var{L^\flat(G)}$ holds for every group $G$.\qed
\end{Corollary}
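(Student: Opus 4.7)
The plan is to observe that $\var{R^\flat(G)} = \var{L^\flat(G)}$ falls out immediately from Lemma~\ref{lm:right-left} together with its left-right dual (which asserts that $L^\flat(G)$ divides $R^\flat(G)\times R^\flat(G)$ by the symmetric construction: reverse the roles of left and right cosets in the subsemigroup $T$ and in the map $\varphi$, so that $\ovl{g}$ and $\wtil{g}$ swap roles throughout). Since the statement and proof of that dual are obtained by purely mechanical symmetrization of the preceding lemma, I would simply invoke it without reproducing the calculation.

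Given these two divisibility facts, I would then appeal to the HSP-theorem. Lemma~\ref{lm:right-left} presents $R^\flat(G)$ as a homomorphic image of a subsemigroup of $L^\flat(G)\times L^\flat(G)$; since the class $\var{L^\flat(G)}$ is closed under direct products, subsemigroups, and homomorphic images, it must contain $R^\flat(G)$, and hence contain every semigroup in $\var{R^\flat(G)}$. This gives the inclusion $\var{R^\flat(G)}\subseteq\var{L^\flat(G)}$. Applying the dual of Lemma~\ref{lm:right-left} in exactly the same way yields the reverse inclusion $\var{L^\flat(G)}\subseteq\var{R^\flat(G)}$, and the two inclusions together give the claimed equality.

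There is no real obstacle here; the only point worth flagging is the use of the word \emph{divides} in Lemma~\ref{lm:right-left}, which in the paper's terminology means ``homomorphic image of a subsemigroup'' and therefore exactly matches what the HSP closure of a variety requires. Consequently the proof is a one-line deduction from Lemma~\ref{lm:right-left} and its dual, and I would write it as such.
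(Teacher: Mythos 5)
Your proposal is correct and matches the paper exactly: the paper derives this corollary immediately from Lemma~\ref{lm:right-left} and its dual, with the HSP closure of a variety under divisors being the (implicit) justification. Nothing further is needed.
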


Note however that the equality $\var{R(G)} = \var{L(G)}$ fails even for groups with a very simple structure such as groups of prime order (see \cite{PaVo06a}).

The final result of this section compares varieties of the form $\var{L(G)}$ (or those of the form $\var{L^\flat(G)}$) when $G$ varies.

\begin{Lemma}
\label{lm:difference}
Let $G_1$ and $G_2$ be groups of finite exponent such that $\var G_1\ne\var G_2$. Then $\var L(G_1)\ne\var L(G_2)$ and $\var L^\flat(G_1)\ne\var L^\flat(G_2)$.
\end{Lemma}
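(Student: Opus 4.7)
The plan is to prove the contrapositive of each of the two statements: assuming $\var L(G_1) = \var L(G_2)$ (respectively $\var L^\flat(G_1) = \var L^\flat(G_2)$), I will show that $G_1$ and $G_2$ satisfy exactly the same semigroup identities, so $\var G_1 = \var G_2$. The mechanism is to encode every semigroup identity $u(x_1,\dots,x_k) = v(x_1,\dots,x_k)$ as an identity $\tilde u = \tilde v$ whose validity in $L(G)$ is equivalent to the validity of $u=v$ in the group $G$; the encoding prepends a ``neutralizing'' prefix that behaves as the identity on substitutions by elements of $G$ but absorbs both sides on any substitution involving an element outside $G$.

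Concretely, pick $n$ to be a common multiple of the exponents of $G_1$ and $G_2$ (both finite by hypothesis) with $n \ge 2$---the case of two trivial groups is immediate---and set
\[
\tilde u := x_1^n x_2^n \cdots x_k^n u,\qquad \tilde v := x_1^n x_2^n \cdots x_k^n v.
\]
For any group $G$ whose exponent divides $n$, I would verify, by case analysis on an arbitrary substitution $x_i \mapsto a_i\in L(G)$, the equivalence $L(G) \models \tilde u = \tilde v$ if and only if $G \models u = v$. If every $a_i$ lies in $G$, then $a_i^n = 1$, the prefix disappears, and the identity reduces to $u=v$ in $G$. Otherwise, let $i_0$ be the least index with $a_{i_0}\in G_E$; the initial factors $a_1^n,\dots,a_{i_0-1}^n$ equal $1$, while $a_{i_0}^n = a_{i_0}$ is a left zero that absorbs everything to its right, forcing $\tilde u(a) = \tilde v(a) = a_{i_0}$ regardless of $u$ and $v$. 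For $L^\flat(G)$ the same prefix works provided $n \ge 2$: if some $a_i = 0$ both sides vanish, while in the remaining ``mixed'' case the equality $(gE)(hE) = 0$ in $L^\flat(G)$ gives $a_{i_0}^n = 0$, again collapsing both sides.

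With the equivalence established symmetrically for $G = G_1$ and $G = G_2$, the chain $G_1 \models u=v$ iff $L(G_1) \models \tilde u = \tilde v$ iff $L(G_2) \models \tilde u = \tilde v$ iff $G_2 \models u=v$ yields $\var G_1 = \var G_2$, and the $L^\flat$ version is identical. I expect the main obstacle to be locating the right prefix---once the combination $x_1^n \cdots x_k^n$ is identified, the verification reduces to a direct case analysis---together with the minor bookkeeping of ensuring that a single $n$ works for both groups (handled by choosing a common multiple of their exponents) and of checking the collapsing behaviour separately in each of the two constructions, since in $L(G)$ the set $G_E$ is a left-zero band whereas in $L^\flat(G)$ the set $G_E\cup\{0\}$ is a null semigroup.
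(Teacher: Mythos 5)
Your proposal is correct and uses essentially the same mechanism as the paper's proof: an absorbing prefix ($x_1^n\cdots x_k^n$ in your version, $(x_1\cdots x_m)^n$ in the paper's, with $n$ a common multiple of the exponents and $n\ge 2$) that evaluates trivially on substitutions inside $G$ and collapses both sides to a left zero in $L(G)$, respectively to $0$ in $L^\flat(G)$, otherwise. The only differences are cosmetic: you argue the contrapositive and establish a two-way equivalence for every semigroup identity, while the paper transfers a single separating identity directly.
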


\begin{proof}
Since $\var G_1\ne\var G_2$, there is an identity $u=v$ which holds in one of the groups and fails in the other. For certainty, let $u=v$ hold in $G_1$ and fail in $G_2$.
Denote by $n$ the least common multiple of the exponents of $G_1$ and $G_2$ and observe that $n>1$ since the group $G_2$ is non-trivial. The identity $x^n=1$ holds in both $G_1$ and $G_2$. Using this identity, one can rewrite the group words $u$ and $v$ such that they will contain only positive powers of their letters, that is, we may (and will) assume that $u$ and $v$ are semigroup words. Let $\{x_1,\dots,x_m\}$ be set of all letters that occur in either $u$ or $v$ and consider the identity
\begin{equation}
\label{eq:difference}
(x_1\cdots x_m)^nu=(x_1\cdots x_m)^nv.
\end{equation}
Clearly, this identity fails in each of the semigroups $L(G_2)$ and $L^\flat(G_2)$ as $G_2$ is a subgroup in each of them and \eqref{eq:difference} is equivalent to $u=v$ in every group of exponent dividing $n$. We claim that  \eqref{eq:difference} holds in both $L(G_1)$ and $L^\flat(G_1)$. Indeed, suppose that the letters $x_1,\dots,x_m$ are evaluated in one of these semigroups. If all their values belong to $G_1$, the values of $u$ and $v$ are equal, and hence, so are the values of $(x_1\cdots x_m)^nu$ and $(x_1\cdots x_m)^nv$. If some letter is evaluated at an element beyond $G_1$, the value of the word $(x_1\cdots x_m)^n$ will be a left zero in the case of $L(G_1)$ and 0 in the case of $L^\flat(G_1)$ --- the latter conclusion relies on the facts that $n>1$ and $L^\flat(G_1)\setminus G_1$ is an ideal being a zero semigroup. In either of the cases, the values of $(x_1\cdots x_m)^nu$ and $(x_1\cdots x_m)^nv$ coincide with that of $(x_1\cdots x_m)^n$. Thus, the identity \eqref{eq:difference} separates $\var L(G_1)$ from $\var L(G_2)$ and $\var L^\flat(G_1)$ from $\var L^\flat(G_2)$.
\end{proof}

\section{Locally $\K$-finite varieties for $\K\in\{\gL,\gR,\gH\}$}
\label{sec:rlh}

\subsection{Reduction to varieties of completely regular semigroups.} The first, relatively easy, step in our characterization of locally $\K$-finite varieties for $\K$ being one of the relations $\gL,\gR,\gH$ reduces the problem to the completely regular case.

\begin{Proposition}
\label{prop:c_out}
Let $\K\in\{\gL,\gR,\gH\}$ and let $\V$ be a locally $\K$-finite variety that contains a Novikov--Adian group. Then $\V$ excludes the semigroup $N_2^1$.
\end{Proposition}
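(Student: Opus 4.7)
The plan is a proof by contradiction: I suppose $N_2^1 \in \V$. By hypothesis $\V$ contains a Novikov--Adian group, so Lemma~\ref{lm:nag} supplies such a group $G \in \V$ together with an infinite finitely generated subgroup $H = \langle h_1, \dots, h_n \rangle$ and an element $y \in G$ satisfying $H \cap y^{-1}Hy = \{1\}$; conjugating by $y$ also gives $H \cap yHy^{-1} = \{1\}$. Since $\V$ is closed under direct products, $N_2^1 \times G$ lies in $\V$, and I will exhibit inside it a finitely generated subsemigroup $T$ that is simultaneously $\gR$-infinite and $\gL$-infinite, and hence $\gH$-infinite by Lemma~\ref{lm:inclusions}(i); each possibility contradicts $\V$ being locally $\K$-finite for the corresponding $\K \in \{\gL, \gR, \gH\}$.

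Let $T$ be the subsemigroup of $N_2^1 \times G$ generated by the $n+2$ elements $(e, h_1), \dots, (e, h_n), (a, 1), (a, y)$; clearly $T$ is finitely generated. Periodicity of $G$ ensures that $H$ coincides with the subsemigroup of itself generated by the $h_i$, so $(e, h) \in T$ for every $h \in H$. The relation $a^2 = 0$ in $N_2^1$ forces every product in $T$ involving two or more $(a, \cdot)$-factors into $\{0\} \times G$; a routine inventory of the remaining products then shows
\[
T = \bigl(\{e\} \times H\bigr) \cup \bigl(\{a\} \times (H \cup HyH)\bigr) \cup \bigl(\{0\} \times M\bigr),
\]
where $M$ denotes the submonoid of $G$ generated by $H \cup \{y\}$.

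The crux is the ideal computation inside $T$. For any $(a, g) \in T$ a direct calculation, which uses that $hM = M$ for each $h \in H$ (valid because $h^{-1} \in H \subseteq M$), yields $(a, g)T^1 = (\{a\} \times gH) \cup (\{0\} \times gM)$. Consequently $(a, g_1) \mathrel{\gR} (a, g_2)$ in $T$ iff $g_1 H = g_2 H$. Now examine the family $\{(a, h_0 y) : h_0 \in H\} \subseteq T$: if $h_0 y H = h_0' y H$ then $h_0^{-1} h_0' \in H \cap yHy^{-1} = \{1\}$, so $h_0 = h_0'$. Because $H$ is infinite, this already produces infinitely many $\gR$-classes in $T$. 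The mirror computation of $T^1(a, g)$, combined with the family $\{(a, y h_0) : h_0 \in H\}$ and the hypothesis $H \cap y^{-1}Hy = \{1\}$, shows in the same way that $T$ is $\gL$-infinite.

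The main obstacle I expect is pinning down the description of $T$ displayed above. The quadratic collapse $(a, g_1)(a, g_2) = (0, g_1 g_2)$ keeps the middle layer $\{a\} \times (H \cup HyH)$ unusually thin, and one must verify carefully that no further product in $T$ escapes this layer into $\{a\} \times G$ (otherwise the right-ideal computation could lose control). Once this bookkeeping is secured and the auxiliary fact $hM = M$ for $h \in H$ is noted, the coset-separation step that drives the contradiction is forced by the very way Lemma~\ref{lm:nag} was engineered.
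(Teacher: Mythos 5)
Your proof is correct and takes essentially the same route as the paper: assume $N_2^1\in\V$, invoke Lemma~\ref{lm:nag}, form a finitely generated subsemigroup of $N_2^1\times G$, and use $H\cap y^{-1}Hy=\{1\}$ to separate infinitely many $\gR$-classes among the elements $(a,hy)$ (and dually infinitely many $\gL$-classes among the $(a,yh)$). The only cosmetic differences are your extra generator $(a,1)$ and the explicit computation of the principal one-sided ideals, where the paper instead argues directly on the possible form of the multiplier $(e,h_3)$.
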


\begin{proof}
Arguing by contradiction, assume that $\V$ contains the semigroup $N_2^1$. By Lemma~\ref{lm:nag} $\V$ contains a group $G$ such that some infinite finitely generated subgroup $H$ of $G$ and some $y\in G$ satisfy $H\cap y^{-1}Hy=\{1\}$. Let $g_1,\dots,g_n$ generate the subgroup $H$. Consider the direct product $N_2^1\times G\in\V$ and let $S$ be the subsemigroup of $N_2^1\times G$ generated by the pairs $(e,g_i)$, $i=1,\dots,n$, and $(a,y)$. It is easy to see that
\[
S=\left(\{e\}\times H\right)\cup\left(\{a\}\times HyH\right)\cup J,
\]
where $J$ stands for the set of all pairs from $S$ whose first entry is equal to 0. Suppose that two pairs $(a,h_1y)$ and $(a,h_2y)$, where $h_1,h_2\in H$, happen to be $\gR$-related in $S$. Then
\[
(a,h_1y)=(a,h_2y)(e,h_3)=(a,h_2yh_3)
\]
for some $h_3\in H$. Hence $h_1y=h_2yh_3$, and therefore, $y^{-1}h_2^{-1}h_1y=h_3$. Since $H\cap y^{-1}Hy=\{1\}$, we conclude that $h_3=1$ whence $h_1=h_2$. This means that $(a,h_1y)$ and $(a,h_2y)$ cannot be $\gR$-related in $S$ whenever $h_1\ne h_2$. We see that $S$ has at least $|H|$ different $\gR$-classes. Since the group $H$ is infinite, the variety $\V$ fails to be locally $\gR$-finite. Similarly, $S$ has at least $|H|$ different $\gL$-classes whence $\V$ is not locally $\gL$-finite as well. We have thus reached the desired contradiction.
\end{proof}

\begin{Proposition}
\label{prop:p_out}
Let $\K\in\{\gR,\gH\}$ and let $\V$ be a locally $\K$-finite variety that contains a Novikov--Adian group. Then $\V$ excludes the semigroup $N_2^\ell$.
\end{Proposition}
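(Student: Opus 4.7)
The approach mirrors the proof of Proposition~\ref{prop:c_out}, but the asymmetric multiplication of $N_2^\ell$ (where $ea=a$ but $ae=0$) yields a cleaner argument. Arguing by contradiction, suppose $N_2^\ell\in\V$, pick any Novikov--Adian group $G\in\V$ with generators $g_1,\dots,g_n$, and let $S$ be the subsemigroup of the direct product $N_2^\ell\times G\in\V$ generated by $(e,g_1),\dots,(e,g_n)$ together with $(a,1_G)$.

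First I would describe $S$ explicitly by inspecting words in these generators. Since $a\cdot e=a\cdot a=0$ in $N_2^\ell$, the first coordinate of a product $(u_1,\dots,u_k)$ of generators equals $e$ when every $u_i$ is of the form $(e,g_j)$, equals $a$ when exactly one $(a,1_G)$ occurs and it sits at the end, and equals $0$ otherwise. Hence
\[
S=\bigl(\{e\}\times G\bigr)\cup\bigl(\{a\}\times G\bigr)\cup J,
\]
where $J$ is contained in $\{0\}\times G$ and is an ideal of $S$. In particular, $S$ is finitely generated and belongs to $\V$.

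The heart of the argument is to show that any two distinct elements of $\{a\}\times G$ lie in distinct $\gR$-classes of $S$. Suppose $(a,g)\mathrel{\gR}(a,g')$, so $(a,g)=(a,g')s$ for some $s\in S^1$. For every $s\in S$ the first coordinate of $(a,g')s$ is $a\cdot(\text{first coord of }s)$, which equals $0$ regardless of whether that first coordinate is $e$, $a$, or $0$. Hence $s$ must be $1$, giving $g=g'$. Thus $S$ has at least $|G|$ distinct $\gR$-classes; since $G$ is infinite, $S$ is $\gR$-infinite, contradicting local $\gR$-finiteness of $\V$. The case $\K=\gH$ follows immediately from $\gH\subseteq\gR$.

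There is no serious obstacle here, and in particular Lemma~\ref{lm:nag} is \emph{not} needed: the relation $ae=0$ in $N_2^\ell$ already blocks any right-multiplication identifications, without having to arrange $H\cap y^{-1}Hy=\{1\}$ as in Proposition~\ref{prop:c_out}. The same asymmetry also clarifies why the proposition is restricted to $\K\in\{\gR,\gH\}$: for $\gL$, the equality $(e,h)(a,g)=(a,hg)$ forces the whole of $\{a\}\times G$ into a single $\gL$-class, so the analogous argument simply does not apply to $N_2^\ell$ on the left (it would, dually, apply to $N_2^r$).
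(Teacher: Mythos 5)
Your proof is correct and follows essentially the same idea as the paper's: the key point in both is that $(a,g)x$ lands in $\{0\}\times G$ for every $x$, so the elements of $\{a\}\times G$ fall into $|G|$ distinct $\gR$-classes. The only (harmless) difference is that the paper works with the full product $N_2^\ell\times G$, using Lemma~\ref{lm:RRW} to get finite generation, whereas you pass to an explicitly finitely generated subsemigroup and thereby avoid that lemma.
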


\begin{proof}
Let $G$ be a Novikov--Adian group in $\V$. Arguing by contradiction, assume that $\V$ contains the semigroup $N_2^\ell$. The direct product $S:=N_2^\ell\times G\in\V$ is finitely generated by Lemma~\ref{lm:RRW}. Since $(a,g)x\in\{0\}\times G$ for every $x\in S$, no pairs $(a,g)$ and $(a,h)$ with $g\ne h$ can be $\gR$-related in $S$. Thus, $S$ has at least $|G|$ different $\gR$-classes whence $\V$ is not locally $\gR$-finite, a contradiction.
\end{proof}

The following result gives the desired reduction to the completely regular case for the problem of classifying locally $\gH$-finite semigroup varieties provided they contain  Novikov--Adian groups.

\begin{Proposition}
\label{thm:hlf}
Let $\V$ be a variety that contains a Novikov--Adian group. The variety $\V$ is locally $\gH$-finite if and only if so is the variety $\CR(\V)$ and every semigroup in $\V$ is a locally finite extension of a periodic completely regular ideal.
\end{Proposition}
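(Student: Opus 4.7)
The plan is to establish both directions using the infrastructure already developed, especially Lemmas~\ref{lm:extension}, \ref{lm:crv}, \ref{lm:Jura}, \ref{lm:grSideal}, Corollaries~\ref{cor:grSideal}, \ref{cor:small extension}, and Propositions~\ref{prop:c_out}, \ref{prop:p_out} (together with the latter's left-right dual).

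\medskip

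\noindent\textbf{Necessity.} Assume $\V$ is locally $\gH$-finite. Since $\V$ is periodic by Lemma~\ref{lm:periodic}, Lemma~\ref{lm:crv} gives that $\CR(\V)$ is a subvariety of $\V$, and any subvariety of a locally $\gH$-finite variety is obviously locally $\gH$-finite. Now let $S\in\V$. To locate a periodic completely regular ideal of $S$ containing $E(S)$, the natural candidate is $\Gr S$. By Proposition~\ref{prop:c_out} the variety $\V$ excludes $N_2^1$; by Proposition~\ref{prop:p_out} applied with $\K=\gH$ it excludes $N_2^\ell$; and by the left-right dual of Proposition~\ref{prop:p_out} (local $\gH$-finiteness is self-dual, being the conjunction of local $\gR$- and local $\gL$-finiteness by Lemma~\ref{lm:inclusions}(i)) it also excludes $N_2^r$. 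Thus none of $N_2^1,N_2^\ell,N_2^r$ divides $S$, so Corollary~\ref{cor:grSideal} shows that $\Gr S$ is an ideal of $S$. By construction $\Gr S$ is a completely regular semigroup containing all idempotents of $S$ and is periodic (as a subsemigroup of the periodic semigroup $S$). Finally, Lemma~\ref{lm:extension} yields that $S$ is a locally finite extension of $\Gr S$.

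\medskip

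\noindent\textbf{Sufficiency.} Assume $\CR(\V)$ is locally $\gH$-finite and every $S\in\V$ is a locally finite extension of a periodic completely regular ideal. Let $S\in\V$ be finitely generated and pick a periodic completely regular ideal $J$ of $S$ with $S/J$ locally finite. Being finitely generated and locally finite, $S/J$ is finite, so $S\setminus J$ is finite and $J$ is a large subsemigroup of $S$ in the sense of Jura. By Lemma~\ref{lm:Jura}, $J$ is itself finitely generated. Since $J$ is completely regular and periodic, it lies in $\CR(\V)$ (here one uses that in a periodic variety $\CR(\V)$ is defined by $x=x^{k+1}$, as in the proof of Lemma~\ref{lm:crv}). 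The hypothesis then gives that $J$ is $\gH$-finite, and Corollary~\ref{cor:small extension} propagates $\gH$-finiteness from $J$ to its small extension $S$. Hence $\V$ is locally $\gH$-finite.

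\medskip

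\noindent\textbf{Main obstacle.} Both directions are essentially assembly work, but the necessity direction requires that all three ``forbidden'' three-element semigroups $N_2^1,N_2^\ell,N_2^r$ be absent from $\V$; Propositions~\ref{prop:c_out} and~\ref{prop:p_out} cover $N_2^1$ and $N_2^\ell$, so the only delicate point is to invoke the (unstated but symmetric) left-right dual of Proposition~\ref{prop:p_out} for $N_2^r$, which is legitimate because the hypothesis of local $\gH$-finiteness is self-dual. Once this is observed, Corollary~\ref{cor:grSideal} makes the choice of $\Gr S$ as the desired ideal automatic, and the rest is routine.
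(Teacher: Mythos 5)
Your proof is correct and follows essentially the same route as the paper's: the same exclusion of $N_2^1$, $N_2^\ell$, $N_2^r$ via Propositions~\ref{prop:c_out}, \ref{prop:p_out} and the dual of the latter, the same identification of $\Gr S$ as the ideal via Corollary~\ref{cor:grSideal} and Lemma~\ref{lm:extension}, and the same Jura/small-extension argument for sufficiency. The only (cosmetic) difference is that you cite Corollary~\ref{cor:small extension} where the paper cites Lemma~\ref{lm:small extension}, which is if anything the more precise reference for $\gH$-finiteness.
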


\begin{proof}
\emph{Necessity.} By Lemma~\ref{lm:periodic}, $\V$ is periodic. Thus, the variety $\CR(\V)$ is well defined by Lemma~\ref{lm:crv}; clearly, $\CR(\V)$ must be locally $\gH$-finite. Combining Proposition~\ref{prop:c_out}, Proposition~\ref{prop:p_out}, and the dual of Proposition~\ref{prop:p_out}, we see that $\V$ contains none of the semigroups $N_2^1$, $N_2^\ell$, and $N_2^r$. Now Corollary~\ref{cor:grSideal} implies that in every semigroup $S\in\V$, the set $\Gr S$ forms an ideal. The ideal $\Gr S$ is a periodic completely regular semigroup and contains all idempotents of $S$. Invoking Lemma~\ref{lm:extension}, we see that $S$ is a locally finite extension of a periodic completely regular ideal.

\smallskip

\emph{Sufficiency.} Let $S$ be a finitely generated semigroup in $\V$. Denote by $J$ a periodic completely regular ideal of $S$ such that the Rees quotient $S/J$ is locally finite. Since the semigroup $S/J$ is finitely generated, it is finite whence $J$ is a large subsemigroup of $S$. By Lemma~\ref{lm:Jura}, $J$ is a finitely generated semigroup. Since $J\in\CR(\V)$ and $\CR(\V)$ is locally $\gH$-finite, we conclude that $J$ is $\gH$-finite. Now Lemma~\ref{lm:small extension} applies, showing that $S$ also is $\gH$-finite. Hence $\V$ is locally $\gH$-finite.
\end{proof}

Similar (though somewhat more involved) reductions hold for locally $\gR$-finite and locally $\gL$-finite semigroup varieties. We formulate only the result for locally $\gR$-finite varieties as its version for locally $\gL$-finite varieties can be obtained by a straightforward dualization.

\begin{Proposition}
\label{thm:rlf}
Let $\V$ be a variety that contains a Novikov--Adian group. The variety $\V$ is locally $\gR$-finite if and only if so is the variety $\CR(\V)$ and every semigroup $S\in\V$ is a locally finite extension of an ideal of the form $SR$, where $R$ is a periodic completely regular right ideal in $S$.
\end{Proposition}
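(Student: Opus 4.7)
The necessity direction will follow the same recipe as the proof of Proposition~\ref{thm:hlf}. From local $\gR$-finiteness of $\V$ one extracts periodicity via Lemma~\ref{lm:periodic}, and Lemma~\ref{lm:crv} then supplies $\CR(\V)$ as a (plainly locally $\gR$-finite) subvariety. Propositions~\ref{prop:c_out} and~\ref{prop:p_out} exclude $N_2^1$ and $N_2^\ell$ from $\V$, so Lemma~\ref{lm:grSideal} guarantees that $R:=\Gr S$ is a periodic completely regular right ideal of every $S\in\V$. Since $E(S)\subseteq R\subseteq SR$, Lemma~\ref{lm:extension} delivers that $S/SR$ is locally finite, establishing the ``only if'' direction with witness $R=\Gr S$.

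For sufficiency I will pick a finitely generated $S\in\V$ together with a periodic completely regular right ideal $R$ of $S$ such that $S/SR$ is locally finite. Being a finitely generated quotient, $S/SR$ must be finite, so $SR$ is a large subsemigroup of $S$ which is finitely generated by Lemma~\ref{lm:Jura}, and Lemma~\ref{lm:small extension} reduces the task to proving that $SR$ is $\gR$-finite.

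My next step will be to extract from the hypothesis that $\V$ itself excludes both $N_2^1$ and $N_2^\ell$. If $N_2^i\in\V$ for some $i\in\{1,\ell\}$, then for a Novikov--Adian group $G\in\V$ the direct product $N_2^i\times G$ is finitely generated by Lemma~\ref{lm:RRW} and belongs to $\V$; however, its only periodic completely regular right ideals turn out to be the subgroups of $\{0\}\times G$, and for each such choice the corresponding quotient has size $2|G|+1$ and so is never locally finite, contradicting the hypothesis. With $N_2^1$ and $N_2^\ell$ excluded from $\V$, Lemma~\ref{lm:grSideal} implies that $\Gr T$ is a right ideal of every $T\in\V$, so I may enlarge the given $R$ to $\Gr S$ (this only shrinks $S/SR$ to a homomorphic image of itself and therefore preserves local finiteness), and henceforth assume $R=\Gr S$.

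The main obstacle is then to deduce $\gR$-finiteness of $SR=S\Gr S$ from the local $\gR$-finiteness of $\CR(\V)$. Because $R$ is a completely regular right ideal of $SR$, every $x\in SR$ admits an idempotent $e\in E(R)$ with $xe=x$; a short computation then shows both that $\gR_{SR}$ restricted to $R$ coincides with $\gR_R$ and that every $\gR_{SR}$-class lies entirely inside $R$ or entirely inside $SR\setminus R$. The plan is to extract from a finite generating set of $SR$ a finite collection of elements of $R$---their idempotent powers, their group inverses in $R$, and the products that happen to land in $R$---whose generated subsemigroup $R_0\subseteq R$ lies in $\CR(\V)$, is $\gR$-finite by hypothesis, and meets every $\gR_{SR}$-class of $SR$; this will yield the $\gR$-finiteness of $SR$ and, via Lemma~\ref{lm:small extension}, of $S$. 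The hard part will be verifying that finitely many such generators genuinely suffice to intersect every $\gR_{SR}$-class---both inside $R$ and in its complement---and this is where the argument must depart from Proposition~\ref{thm:hlf}, which did not have to contend with $\gR$-classes lying outside the completely regular kernel.
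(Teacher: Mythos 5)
Your necessity argument is the paper's own and is correct. The trouble is in the sufficiency direction, and it sits exactly at the step you defer to the end. You correctly prove two facts about the right ideal $R$ inside $SR$: that $\gR$ on $SR$ restricts to $\gR$ on $R$, and that every $\gR_{SR}$-class lies entirely inside $R$ or entirely inside $SR\setminus R$. The second fact is fatal to your plan: a subsemigroup $R_0\subseteq R$ can never meet an $\gR_{SR}$-class contained in $SR\setminus R$, and such classes do occur --- already $S=N_2^r$ (which your reductions do \emph{not} exclude from $\V$) has $R=\Gr S=\{e,0\}$ a right ideal, $SR=S$, and the element $a$ forming a singleton $\gR$-class disjoint from $R$. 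So no choice of ``idempotent powers, group inverses, and products landing in $R$'' can produce an $R_0\subseteq R$ with the property you need; the hard part is not merely unverified, it is false. (Two smaller points: the detour excluding $N_2^1$ and $N_2^\ell$ and replacing $R$ by $\Gr S$ is sound but unnecessary --- note also that the only completely regular right ideal of $N_2^i\times G$ is all of $\{0\}\times G$, not its proper subgroups, since right ideals are closed under right translation by $G$.)

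The paper's proof supplies the missing idea, which is a translation device rather than a subsemigroup of $R$ meeting all classes. Writing each generator of the (finitely generated, by Lemma~\ref{lm:Jura}) semigroup $SR$ as $s_i=t_ir_i$ with $t_i\in S$ and $r_i\in R$, one lets $Q$ be the subsemigroup of $R$ generated by the $r_i$ together with all products $r_it_j$ (these lie in $R$ precisely because $R$ is a right ideal of $S$). Every product $s_{i_1}\cdots s_{i_k}$ then factors as $t_{i_1}(r_{i_1}t_{i_2})\cdots(r_{i_{k-1}}t_{i_k})r_{i_k}\in t_{i_1}Q$, so $SR\subseteq\bigcup_{i=1}^n t_iQ$: the finitely many elements $t_i$, which live \emph{outside} $R$, are what reach the $\gR$-classes in $SR\setminus R$. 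Since $Q$ is a finitely generated member of $\CR(\V)$, it is $\gR$-finite by hypothesis, hence has finitely many right ideals by Lemma~\ref{lm:ideals}; any right ideal $J$ of $SR$ decomposes as $J=\bigcup_i t_iJ_i$ with each $J_i$ a right ideal of $Q$, so $SR$ has finitely many right ideals and is $\gR$-finite, after which Lemma~\ref{lm:small extension} finishes as you intended. You would need to replace the core of your Step with this (or an equivalent) covering argument.
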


\begin{proof}
\emph{Necessity}. By Lemma~\ref{lm:periodic}, $\V$ is periodic whence the variety $\CR(\V)$ is well defined by Lemma~\ref{lm:crv}. Clearly, $\CR(\V)$ is locally
$\gR$-finite. Combining Propositions~\ref{prop:c_out} and~\ref{prop:p_out}, we see that $\V$ contains none of the semigroups $N_2^1$ and $N_2^\ell$. Lemma~\ref{lm:grSideal} ensures that in every semigroup $S\in\V$, the set $R:=\Gr S$ is a right ideal. Clearly, $R$ is periodic and completely regular and $SR$ is an ideal in $S$ which contains all idempotents of $S$. By Lemma~\ref{lm:extension} $S$ is a locally finite extension of $SR$.

\smallskip

\emph{Sufficiency}. Let $S$ be a finitely generated semigroup in $\V$ being a locally finite extension of its ideal $SR$ where $R$ is a periodic completely regular right ideal. Since the semigroup $S/SR$ is finitely generated, it is finite whence $S$ is a small extension of $SR$. By Lemma~\ref{lm:small extension}, if $SR$ is $\gR$-finite, then so is $S$. Thus, it suffices to show that $SR$ is an $\gR$-finite semigroup.

Lemma~\ref{lm:Jura} implies that the semigroup $SR$ is finitely generated. Let $s_1,\dots,s_n$ generate $SR$. There exist $r_1,\dots,r_n\in R$ such that for some $t_1,\dots,t_n\in S$ we have $s_i=t_ir_i$ for each $i=1,\dots,n$. Denote by $Q$ the subsemigroup of $R$ generated by $r_1,\dots,r_n$ and all products $r_it_j$ for $i,j=1,\dots,n$. Each $s=s_{i_1}s_{i_2}\cdots s_{i_k}\in SR$ can be rewritten as
\[
s=t_{i_1}(r_{i_1}t_{i_2})(r_{i_2}t_{i_3})\cdots (r_{i_{k-1}}t_{i_k})r_{i_k}\in t_{i_1}Q,
\]
whence we conclude that $SR\subseteq\cup_{i=1}^nt_iQ$. If $J$ is a right ideal in $SR$, we let
\[
J_i:=\{q\in Q\mid t_iq\in J\}.
\]
It is clear that
\begin{equation}
\label{eq:decomposition}
J=J\cap\bigl(\cup_{i=1}^nt_iQ\bigr)=\cup_{i=1}^n(J\cap t_iQ)=\cup_{i=1}^nt_iJ_i,
\end{equation} and it is easy to see that each $J_i$ forms a right ideal in $Q$. Since $Q\subseteq R$ is a completely regular semigroup in $\V$, it belongs to the variety $\CR(\V)$ which is locally $\gR$-finite, and since $Q$ is finitely generated, we conclude that $Q$ is $\gR$-finite. By Lemma~\ref{lm:ideals} $Q$ has only finitely many right ideals, and therefore, there are only finitely many possibilities to choose the right ideals $J_i$ of $Q$ in the decomposition~\eqref{eq:decomposition}. Hence, $SR$ has only finitely many right ideals and, by Lemma~\ref{lm:ideals}, $SR$ is $\gR$-finite.
\end{proof}

\subsection{Locally $\K$-finite varieties of periodic completely regular semigroups for $\K\in\{\gL,\gR,\gH\}$.} In view of Corollary~\ref{cor:locfin}, the reductions provided by Proposition~\ref{thm:hlf}, Proposition~\ref{thm:rlf} and the dual of the latter imply that, to complete the classification of locally $\K$-finite varieties of finite axiomatic rank with $\K\in\{\gH,\gR,\gL\}$, we have to characterize locally $\K$-finite varieties of periodic completely regular semigroups. Our next result provides characterizations of the latter varieties in the language of ``forbidden objects''. Observe that these characterizations do not require that varieties under consideration are of finite axiomatic rank.

\begin{Theorem}
\label{thm:cr}
Let $\V$ be a variety of periodic completely regular semigroups.

(r) $\V$ is locally $\gR$-finite if and only if $\V$ contains none of the semigroups $L(G)$ where $G$ is a Novikov--Adian group.

($\ell$) $\V$ is locally $\gL$-finite if and only if $\V$ contains none of the semigroups $R(G)$ where $G$ is a Novikov--Adian group.

(h) $\V$ is locally $\gH$-finite if and only if $\V$ contains none of the semigroups $L(G)$, $R(G)$ where $G$ is a Novikov--Adian group.
\end{Theorem}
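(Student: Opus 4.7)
The necessity of each part follows immediately from Lemma~\ref{lm:r-inf} and its dual, combined with Lemma~\ref{lm:inclusions}(i) for (h): each $L(G)$ (respectively $R(G)$) with $G$ Novikov--Adian is finitely generated yet $\gR$-infinite (respectively $\gL$-infinite), and both are therefore $\gH$-infinite.

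For sufficiency in (r), I will argue by contrapositive: from $S\in\V$ finitely generated and $\gR$-infinite I will produce $L(G_0)\in\V$ with $G_0$ Novikov--Adian. By Proposition~\ref{prop:clifford}(i)--(ii), $S/\gD$ is a finite semilattice, so some $\gD$-class $D$ is itself $\gR$-infinite; by Proposition~\ref{prop:rees-suchkevich}, $D\cong M(G;I,\Lambda;P)$ with $|I|$ infinite. Replacing $S$ by the preimage of the principal filter of $[D]$ in $S/\gD$---still finitely generated, since any $S$-generator outside this filter would be ruled out as a factor of any element of the filter---I may assume $D$ is the minimum $\gD$-class of $S$. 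Then for any idempotent $e\in D$, the map $\mu_e\colon s\mapsto ese$ is a surjective semigroup homomorphism $S\to H_e\cong G$: the identity $(ese)(ete)=este$ uses only $e^2=e$, and minimality of $D$ forces $eSe\subseteq H_e$. Hence $G$ is a finitely generated group in $\V$.

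The aim now is to exhibit $L_{H'}(G')\in\V$ with $G'$ Novikov--Adian and $H'\le G'$ of infinite index; Proposition~\ref{prop:reduction} then supplies the required $L(G_0)$. If $G$ is infinite, then it is itself Novikov--Adian, and Lemma~\ref{lm:nag} provides such a $G'\in\V$ together with $H'$ and $y$ satisfying $H'\cap y^{-1}H'y=\{1\}$; combining a suitable lift of $G'$ inside the maximal subgroup $H_e$ with an idempotent of $D$ lying in a row outside the $H'$-orbit, and passing to an appropriate quotient, yields $L_{H'}(G')\in\V$---this is the completely regular analogue of the construction from Proposition~\ref{prop:c_out}. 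If instead $G$ is finite, I invoke the classical result that a variety of completely regular semigroups is locally finite precisely when its group subvariety is locally finite: this forces $\V$ to contain some Novikov--Adian group $G'$ (else $\V$ would be locally finite, contradicting $\gR$-infiniteness of $S$). The infinity of $|I|$ is then realized through the representation $\phi\colon S\to T_I$ given by left multiplication on the $\gR$-classes of $D$, and a finitely generated infinite subgroup of $\phi(S)$---arising because finitely many $S$-generators cannot produce infinitely many rows without nontrivial group action somewhere in $S$---supplies the desired $H'$ of infinite index as the stabilizer of a point in an infinite orbit, whence $L_{H'}(G')\in\V$ is built as in the previous case.

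Part ($\ell$) follows by word-for-word dualization (interchanging $L$- with $R$-constructions, rows with columns, and using the dual of Proposition~\ref{prop:reduction}), and (h) follows from (r), ($\ell$) and Lemma~\ref{lm:inclusions}(i). The main obstacle I expect is the finite-$G$ case: locating the required Novikov--Adian group inside $S$ (or within $\V$) and marrying it to the infinite row-orbit to assemble the $L_{H'}(G')$-structure. This step will rely on the Rees representation (Proposition~\ref{prop:rees-suchkevich}) together with the conjugation and quotient machinery developed in Section~\ref{sec:constructions}.
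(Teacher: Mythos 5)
Your top-level strategy coincides with the paper's: necessity from Lemma~\ref{lm:r-inf} and its dual, and sufficiency by manufacturing some $L_{H'}(G')$ with $H'$ of infinite index and then invoking Proposition~\ref{prop:reduction}. But the execution has a concrete error and, more importantly, leaves the actual construction unproved. The error: $s\mapsto ese$ is \emph{not} a homomorphism in general. From $e^2=e$ you get $(ese)(ete)=esete$, not $este$; in a Rees matrix semigroup $M(G;I,\Lambda;P)$ with a non-normalized sandwich matrix these differ (take $P=\left(\begin{smallmatrix}1&1\\1&p\end{smallmatrix}\right)$, $p\ne1$, $e=(1,1,1)$, $s=t=(2,1,2)$: then $(ese)(ete)=e$ while $este=(1,p,1)$). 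So you cannot conclude this way that the structure group $G$ of the $\gR$-infinite kernel $D$ is finitely generated --- and indeed it need not be, since $D$ itself (having infinite $I$) is not finitely generated; the paper only establishes finite generation of the structure groups of the \emph{maximal} completely simple components (its Step~3), which is where finitely many generators actually live.

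The deeper gap is that in both of your cases the passage from ``$I$ is infinite'' to ``$L_{H'}(G')$ divides something in $\V$'' is asserted rather than proved (``combining a suitable lift \dots and passing to an appropriate quotient'', ``is built as in the previous case''). Note that Lemma~\ref{lm:nag} produces $G'$ as a quotient of a relatively free group in $\V$, not as a subgroup of $H_e$, so there is no ``lift of $G'$ inside $H_e$'' to combine with anything; and $L_{H'}(G')$ does not lie in $\var G'$, so the left-zero ideal must be extracted from the semigroup structure of $S$ --- which is exactly the missing content. Your dichotomy on whether the structure group of the kernel is finite or infinite is also not the right organizing principle, since the $\gR$-infinitude sits in the index set $I$, not in $G$. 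The paper resolves this by first collapsing the kernel to an infinite left zero semigroup (the congruence $\rho$ equal to the intersection of the Rees congruence with $\gR$), then inducting on the number of $\gD$-classes: the base case yields $H:=\{h\in G_i\mid hz=ez\}$ of infinite index explicitly, and the inductive step requires a four-way case analysis including a ``contraction argument'' and a direct-power trick $S^{\mathbb{N}}$ for the situation where the sets $S_\delta z$ are finite but unbounded. None of this machinery --- which is the substance of the sufficiency proof --- appears in your proposal, and the ``classical result'' you invoke in the finite-$G$ case (local finiteness of completely regular varieties with locally finite groups) is neither proved nor available from the paper's toolkit without a finite axiomatic rank hypothesis that Theorem~\ref{thm:cr} deliberately avoids.
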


\begin{proof}
\emph{Necessity} readily follows from Lemma~\ref{lm:r-inf} and its dual.

\smallskip

\emph{Sufficiency}. We are going to verify that if $\V$ contains a finitely generated $\gR$-infinite semigroup $S$, say, then $\V$ contains a semigroup of the form $L_H(G)$, where $G$ is a Novikov--Adian group and $H$ is its subgroup of infinite index. By Proposition~\ref{prop:reduction}, we conclude that $\V$ contains also a semigroup of the form $L(G)$, where $G$ is a Novikov--Adian group. This (by contraposition) will prove sufficiency in the claim (r). Sufficiency in the claim ($\ell$) then follows by duality, and sufficiency in the claim (h) follows too since every $\gH$-infinite semigroup is either $\gR$-infinite or $\gL$-infinite.

By Proposition~\ref{prop:clifford}(i) the relation $\gD$ is a congruence on $S$. Let $Y:=S/\gD$ and let $S_{\alpha}$, $\alpha\in Y$, be the completely simple components of $S$. The semilattice $Y$, being a homomorphic image of $S$, is finitely generated, and therefore, $Y$ is finite. We will induct on $|Y|$, but before we start the inductive proof, we need three preparatory steps. In the first two of them, we ``improve'' our semigroup $S$ keeping its key properties of being finitely generated and $\gR$-infinite.

\smallskip

\emph{Step 1}. Here we will show that it may be assumed that the \emph{kernel} (the least ideal) of $S$ is $\gR$-infinite. By Proposition~\ref{prop:clifford}(iii) there exists an $\gR$-infinite completely simple component of $S$. We order $Y$ with the usual semilattice order in which $\alpha\le\beta$ if and only if $\alpha\beta=\alpha$. Since $Y$ is finite, we can choose a maximal $\gamma\in Y$ with the property that $S_\gamma$ is $\gR$-infinite. Consider $T:=\cup_{\alpha\ge\gamma}S_\alpha$. It is easy to see that $T$ is a subsemigroup of $S$ while its complement $S\setminus T$ forms an ideal in $S$. By Lemma~\ref{lm:fg} $T$ is a finitely generated semigroup in $\V$ in which $S_\gamma$ is the least completely simple component, and hence, the kernel. Thus, without any loss, we can make the subsemigroup $T$ play the role of $S$ in the rest of the proof; in other words, we may assume that $\gamma\le\alpha$ for all $\alpha\in Y$ and $S_\gamma$ is $\gR$-infinite.

\smallskip

\emph{Step 2}. Now we aim to show that $S_\gamma$ may be assumed to consist of left zeros. Indeed, let $\rho$ be the relation on $S$ defined as the intersection of the Rees congruence corresponding to the ideal $S_\gamma$ and the relation $\gR$; in other words, for $s,t\in S$ we have $s\mathrel{\rho}t$ if and only if either $s=t$ or $s,t\in S_\gamma$ and $s\mathrel{\gR}t$. It is known and easy to verify that $\rho$ is a congruence on $S$; we include a verification for the sake of completeness. It suffices to verify that if $s,t\in S_\gamma$ and $s\mathrel{\gR}t$, then $sx\mathrel{\gR}tx$ and $xs\mathrel{\gR}xt$ for every $x\in S$. The latter relation holds because $\gR$ is known to be a left congruence on every semigroup, see, e.g., \cite[Section~2.1]{ClPr61} or \cite[Proposition~2.1.2]{How95}.

Let $M(G;I,\Lambda;P)$ be a Rees matrix semigroup isomorphic to $S_\gamma$. If $s,t\in S_\gamma$ and $s\mathrel{\gR}t$, then $s=(i,g,\lambda)$ and $t=(i,h,\mu)$ for some $i\in I$, $g,h\in G$, and $\lambda,\mu\in\Lambda$, see Proposition~\ref{prop:rees-suchkevich}(ii). Take $e:=(i,p^{-1}_{\lambda i},\lambda)\in S_\gamma$; we have $e^2=e$ and $se=s$. Similarly, if $f:=(i,p^{-1}_{\mu i},\mu)\in S_\gamma$, then $f^2=f$ and $tf=t$. For every $x\in S$ we have $sx=sex$ and $tx=tfx$. The products $ex$ and $fx$ belong to the ideal $S_\gamma$ whence $ex=e(ex)=(i,b,\nu)$ and $fx=f(fx)=(i,c,\kappa)$ for some $b,c\in G$ and $\nu,\kappa\in\Lambda$. Therefore we see that $sx=(i,gp_{\lambda i}b,\nu)$ and $tx=(i,hp_{\mu i}c,\kappa)$. This implies not only that $sx\mathrel{\gR}tx$ but also that $sx\mathrel{\gR}s$ for every $s\in S_\gamma$ and every $x\in S$.

The quotient semigroup $S/\rho$ is finitely generated and its kernel $S_\gamma/\rho$ consists of left zeros because $sx\mathrel{\rho}s$ for every $s\in S_\gamma$ and every $x\in S$ as we have just observed. Notice also that since $S_\gamma$ is $\gR$-infinite, so is $S_\gamma/\rho$. Therefore we can use $S/\rho$ in the role of $S$ in the sequel; in other words, we may assume that $S_\gamma$ is an infinite left zero semigroup.

\smallskip

\emph{Step 3}. Our final preparatory step deals with maximal (with respect to the order on $Y$) completely simple components of $S$. Let $S_\delta$ be such a maximal component. It is easy to see the complement $S\setminus S_\delta$ forms an ideal of $S$, and hence $S_\delta$ is finitely generated by Lemma~\ref{lm:fg}. Let $M(G;I,\Lambda;P)$ be a Rees matrix semigroup isomorphic to $S_\delta$ and let the set $X=\left\{(i_1,g_1,\lambda_1),\dots,(i_k,g_k,\lambda_k)\right\}$ generate $M(G;I,\Lambda;P)$. Since any product of triples from $X$ inherits the first coordinate from its first factor and the last coordinate from its last factor, we conclude that $I=\{i_1,\dots,i_k\}$ and $\Lambda=\{\lambda_1,\dots,\lambda_k\}$. In particular, the index sets $I$ and $\Lambda$ are finite and hence the sandwich matrix
$P=(p_{\lambda i})$ has only finitely many entries. The middle coordinate of any product of triples from $X$ is an alternating product of some elements from the set $Z:=\{g_1,\dots,g_k\}$ with the entries of $P$. Hence the structure group $G$ is generated by the finite set $Z\cup\{p_{\lambda i}\}$. Thus, each maximal completely simple component of $S$ is the union of finitely many copies of a finitely generated group.

\smallskip

We are ready to start the inductive proof. Recall that we consider a finitely generated semigroup $S\in\V$ being a semilattice of completely simple semigroups $S_{\alpha}$, $\alpha\in Y$, and such that its least completely simple component $S_\gamma$ is an infinite left zero semigroup. We aim to show, by induction on $|Y|$, that $\V$ contains a semigroup of the form $L_H(G)$, where $G$ is a Novikov--Adian group and $H$ is its subgroup of infinite index. Observe that the case $|Y|=1$ is impossible: indeed, in this case $S=S_\gamma$ but no infinite left zero semigroup can be finitely generated. Thus, the induction basis should be $|Y|=2$.

\smallskip

\emph{Basis.} Suppose that $|Y|=2$, that is, $Y=\{\gamma,\delta\}$ with $\gamma<\delta$. The component $S_\delta$ is then maximal and by Step~3 it is the union of finitely many copies of a finitely generated group $G$. If $G$ is finite, then $S_\delta$ is finite, and using Lemma~\ref{lm:Jura} we conclude that $S_\gamma$ is a finitely generated semigroup, a contradiction. Hence the group $G$ is infinite whence it is a Novikov--Adian group.

Fix a finite generating set $X$ of $S$ and represent an arbitrary element $s\in S_\gamma$ as a product of generators from $X$. Clearly,  such a product should contain a generator from the intersection $X\cap S_\gamma$. Let $z$ be the generator from $X\cap S_\gamma$ that occurs in the product first from the left. Taking into account that $z$ is a left zero in $S$, we see that $s\in S_\delta^1z$. Hence $S_\gamma=\cup_{z\in X\cap S_\gamma}S_\delta^1z$. The set $S_\gamma$ is infinite whence at least one of the finitely many sets of the form $S_\delta^1z$, where $z\in X\cap S_\gamma$, must be infinite. Fix an element $z\in X\cap S_\gamma$ such that $S_\delta^1z$ is infinite; the set $S_\delta z$ is then infinite too. Since $S_\delta$ is the union of finitely many copies $G_1,\dots,G_n$ of the group $G$, we conclude that the set $G_iz$ is infinite for some $i\in\{1,\dots,n\}$.

The set $T=G_i\cup G_iz$ is easily seen to be a subsemigroup of $S$ in which $G_iz$ is an infinite ideal consisting of left zeros. Let $e$ stand for the identity element of $G_i$ and let $H:=\{h\in G_i\mid hz=ez\}$. It is easy to see that $H$ is a subgroup of $G_i$. Given $g_1,g_2\in G_i$, the equality $g_1z=g_2z$ is equivalent to the inclusion $g_2^{-1}g_1\in H$ which in turn is equivalent to the equality $g_1H=g_2H$ between left cosets of $H$ in $G_i$. Hence the elements of $G_iz$ are in a 1-1 correspondence with the left cosets of $H$ in $G_i$ which implies that $H$ has infinite index in $G_i$. Combining the 1-1 correspondence with the identical map of $G_i$ onto itself, we obtain an isomorphism between $T$ and the semigroup $L_H(G_i)$. Thus, $L_H(G_i)\in\V$.

\smallskip

\emph{Inductive step}. Suppose that $|Y|>2$. Let $\delta$ be a maximal element of $Y$ and $S_\delta$ the corresponding completely simple component. If $S_\delta$ is finite, Lemma~\ref{lm:Jura} implies that the subsemigroup $J:=\cup_{\alpha\ne\delta}S_\alpha$ is finitely generated. Thus, $J$ is a finitely generated semigroup in $\V$ which is a semilattice of fewer completely simple semigroups and still has the infinite left zero semigroup $S_\gamma$ as its least completely simple component. Therefore we are in a position to apply the induction assumption to $J$.

Now suppose that the component $S_\delta$ is infinite. First consider the situation when for some $z\in S_\gamma$, the set $S_\delta z$ is infinite. Then the set $T:=S_\delta\cup S^1_\delta z$ is a subsemigroup in $S$ generated by $z$ together with any generating set of $S_\delta$. Since $S_\delta$ is finitely generated (see Step~3), $T$ is finitely generated too. The semigroup $T$ belongs to $\V$ and is a semilattice of the two completely simple semigroups $S_\delta$ and $S^1_\delta z$ of which the latter is an infinite left zero semigroup. Thus, the induction assumption applies to $T$.

Next consider the ``semi-infinite'' situation when all sets of the form $S_\delta z$, where $z\in S_\gamma$, are finite but there is no uniform upper bound on the sizes of these sets. This means that for every $n\in\mathbb{N}$, there exists an element $z_n\in S_\gamma$ with $|S_\delta z_n|>n$. Consider the direct power $D:=S^{\mathbb{N}}\in\V$ which elements we represent as functions $\mathbb{N}\to S$. Let $s_1,\dots,s_k$ generate $S_\delta$ (recall that $S_\delta$ is finitely generated, see Step~3) and denote by $\ovl s_i\in D$ the constant function taking value $s_i$ at each $n\in\mathbb{N}$. Clearly, the subsemigroup $\ovl S_\delta$ of $D$ generated by the functions $\ovl s_i$, $i=1,\dots,k$, is isomorphic to $S_\delta$. Consider the set $\ovl T:=\ovl S_\delta\cup\ovl S_\delta^1\ovl z$, where the function $\ovl z\in D$ is defined by the rule $\ovl z(n):=z_n$ for each $n\in\mathbb{N}$. Clearly, $\ovl T$ is a subsemigroup of $D$ whence $\ovl T$ belongs to $\V$. Further, it is easy to see that $\ovl S_\delta^1\ovl z$ is an infinite left zero semigroup and $\ovl T$ is a semilattice of the two completely simple semigroups $\ovl S_\delta$ and $\ovl S_\delta^1\ovl z$. Again, we can apply the induction assumption, this time to $\ovl T$.

Finally, consider the situation when there exists a number $n\in\mathbb{N}$ such that $|S_\delta z|\le n$ for all $z\in S_\gamma$. Let $s_1,\dots,s_k,t_1,\dots,t_m$ generate $S$ and be such that $s_1,\dots,s_k$ generate $S_\delta$ and $t_1,\dots,t_m$ belong to $J:=\cup_{\alpha\ne\delta}S_\alpha$. (We do not claim that $t_1,\dots,t_m$ generate $J$ nor that $J$ is finitely generated!) Fix $z\in S_\gamma$ and for each $s\in S_\delta$, consider a word $w$ of minimum length in the free monoid $\{x_1,\dots,x_k\}^*$ such that
$sz=w(s_1,\dots,s_k)z$. Take two decompositions $w=uv$ and $w=u'v'$ of the word $w$ in which $v$ is longer than $v'$ and assume that $v(s_1,\dots,s_k)z=v'(s_1,\dots,s_k)z$. Then we have
\[
sz=w(s_1,\dots,s_k)z=u(s_1,\dots,s_k)v(s_1,\dots,s_k)z=u(s_1,\dots,s_k)v'(s_1,\dots,s_k)z
\]
but the word $uv'$ is shorter than $w$ which contradicts the choice of $w$. Thus, all elements of the form $v(s_1,\dots,s_k)z$ where $v$ is a non-empty suffix of $w$ are different. The number of these elements is equal to the length of $w$ and they all belong to the set $S_\delta z$ whose cardinality does not exceed $n$. Thus, the length of the word $w$ also is less than or equal to $n$. The set $W_n$ of all words of length $\le n$ in the free monoid $\{x_1,\dots,x_k\}^*$ is finite. Hence the set $C:=\{w(s_1,\dots,s_k)\mid w\in W_n\}$
is finite too, and for every $z\in S_\gamma$ and every $s\in S_\delta$, there exists $c\in C$ such that $sz=cz$. We will refer to the latter conclusion as the \emph{contraction argument}.

If the generator $t_j$, $j=1,\dots,m$, lies in the completely simple component $S_{\alpha_j}$ and $\beta:=\delta\alpha_1\cdots\alpha_m$, then clearly the product $z_0:=s_1t_1t_2\cdots t_m$ belongs to $S_\beta$. Since the elements $\delta,\alpha_1,\dots,\alpha_m$ generate the semilattice $Y$, we have $\beta\gamma=\beta$, i.e., $\beta\le\gamma$. As $\gamma$ is the least element of $Y$, we conclude that $\beta=\gamma$, and thus, we have $z_0\in S_\gamma$.

For an arbitrary element $t\in S_\gamma$, there exists a word $u$ in the free monoid $\{x_1,\dots,x_k,y_1,\dots,y_m\}^*$ such that $t=u(s_1,\dots,s_k,t_1,\dots,t_m)$. Collecting factors from $S_\delta$, we can represent $t$ as an alternating product $p_1q_1\cdots p_{\ell-1}q_{\ell-1}p_\ell q_\ell$ where $p_1,\dots,p_\ell\in S_\delta$ ($p_1$ may be empty) and $q_1,\dots,q_\ell$ lie in the subsemigroup of $J$ generated by $t_1,\dots,t_m$ ($q_\ell$ may be empty). Since $t$ is a left zero, $t=tt_1z_0$. Applying the contraction argument to the elements $q_\ell t_1z_0\in S_\gamma$ and $p_\ell\in S_\delta$, we find an element $c_\ell\in C$ such that $p_\ell q_\ell t_1z_0=c_\ell q_\ell t_1z_0$. Then, applying the contraction argument to $q_{\ell-1}c_\ell q_\ell t_1z_0\in S_\gamma$ and $p_{\ell-1}\in S_\delta$, we find an element $c_{\ell-1}\in C$ such that $p_{\ell-1} q_{\ell-1}c_\ell q_\ell t_1z_0=c_{\ell-1}q_{\ell-1}c_\ell q_\ell t_1z_0$. Continuing in the same fashion, we find elements $c_{\ell-2},\dots,c_1\in C$ such that
\begin{align}
\label{eq:alternating}
t=tt_1z_0&=p_1q_1\cdots p_{\ell-1}q_{\ell-1}p_\ell q_\ell t_1z_0\notag\\
     &=p_1q_1\cdots p_{\ell-1}q_{\ell-1}c_\ell q_\ell t_1z_0\notag\\
     &=p_1q_1\cdots c_{\ell-1}q_{\ell-1}c_\ell q_\ell t_1z_0\\
     &\phantom{==}\hbox to 3.5 cm{\dotfill}\notag\\
     &=c_1q_1\cdots c_{\ell-1}q_{\ell-1}c_\ell q_\ell t_1z_0.\notag
\end{align}
Consider the set $\{ct_j,t_j\mid c\in C,\ j=1,\dots,m\}$. It is finite since the set $C$ is finite and is contained in $J$ since $t_j\in J$ and $J$ is an ideal in $S$. Hence the subsemigroup $U$ generated by this set is a finitely generated subsemigroup of $J$. We have
\[
U=U\cap J=U\cap\left(\cup_{\alpha\ne\delta}S_\alpha\right)=\cup_{\alpha\ne\delta}(U\cap S_\alpha),
\]
and each non-empty $U\cap S_\alpha$ is a completely simple semigroup since the class of periodic completely simple semigroups is closed under taking subsemigroups. Hence $U$ is a semilattice of fewer than $|Y|$ completely simple semigroups. The decomposition~\eqref{eq:alternating} ensures that every $t\in S_\gamma$ can be written as a product of factors of the form $ct_j$ with $c\in C$ and $t_j$, and hence, $t$ belongs to $U$. (Here one has to take into account that the generator $s_1$ belongs to $C$ as the value of the word $x_1$ of length~1, whence $z_0=s_1t_1t_2\cdots t_m$ can also be decomposed into a product of generators of $U$.) We see that $S_\gamma\subset U$ whence the least completely simple component of $U$ is an infinite left zero semigroup. Thus, $U$ satisfies all the conditions we need to apply the induction assumption. This completes the proof.
\end{proof}

\begin{Remark}
\label{rm:divisors}
Return for a moment to the situation named ``semi-infinite'' in the proof of Theorem~\ref{thm:cr}, see third paragraph in the inductive step. It is the situation when the chosen maximal completely simple component $S_\delta$ is infinite, all sets of the form $S_\delta z$, where $z\in S_\gamma$, are finite but there is no uniform upper bound on the sizes of these sets. By Step~3 $S_\delta$ is the union of finitely many copies of a finitely generated group $G$ which must be infinite and hence is a Novikov--Adian group. It is easy to verify that the condition that the sets $S_\delta z$ are finite for all $z\in S_\gamma$ but can be arbitrarily large implies that the group $G$ contains subgroups of arbitrarily large finite index. Since by a classical fact of group theory \cite[p.36]{Rob96} every subgroup of finite index contains a normal subgroup of finite index, $G$ contains normal subgroups of arbitrarily large finite index, and hence, $G$ has finite quotient groups of arbitrarily large order. Since each of these finite groups has no more generators than $G$ (as the groups are generated by the images of the generators of $G$) and has exponent dividing the exponent of $G$, we arrive at a negative answer to the question known as the \emph{Restricted Burnside Problem}: is the set of finite groups with a given number of generators and given exponent finite?

Zelmanov~\cite{Zel90,Zel91}, however, has answered the Restricted Burnside Problem in the affirmative for the case of any prime-power exponent. Long before that, Hall and Higman~\cite{HaHi56} proved a reduction theorem showing that a positive answer for the case of an arbitrary exponent follows from the prime-power case modulo the conjecture that there are only finitely many finite simple groups of given exponent. In turn, this conjecture is known to follow from the Classification of the Finite Simple Groups (CFSG), which at present is considered as having been completed, see, e.g., an overview in~\cite{Asc04}. We conclude that modulo CFSG the ``semi-infinite'' situation described above turns out to be impossible.

Why is this observation important in spite of the fact that we were able to handle the ``semi-infinite'' situation by quite elementary tools? If we analyze the above proof of sufficiency of Theorem~\ref{thm:cr}, we see that in all other cases we have proved more, namely, that the semigroup $S$ under consideration has a \textbf{divisor} of the form $L_H(G)$ where $G$ is a Novikov--Adian group and $H$ is its subgroup of infinite index. Thus, modulo CFSG our proof yields sufficiency in the following result which can be considered as a more precise form of Theorem~\ref{thm:cr}(r):

\begin{Corollary}
\label{cor:divisors}
A finitely generated semigroup $S$ belonging to a periodic completely regular variety is $\gR$-finite if and only if none of the semigroups $L_H(G)$ where $G$ is a Novikov--Adian group and $H$ is its subgroup of infinite index divide $S$.\qed
\end{Corollary}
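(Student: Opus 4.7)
For necessity, we would argue the contrapositive: if a semigroup of the form $L_H(G)$, with $G$ a Novikov--Adian group and $H$ of infinite index, divides $S$, then $S$ must be $\gR$-infinite. The plan is an idempotent-lifting argument. Fix a surjective homomorphism $\pi\colon T\to L_H(G)$ from a subsemigroup $T\subseteq S$. Since $S$ lies in a periodic variety, so does $T$, and every element of $T$ has an idempotent power; hence each coset $gH$ (an idempotent of $L_H(G)$) can be lifted to an idempotent $e_{gH}\in T\subseteq S$ with $\pi(e_{gH})=gH$. The key observation will be that no two of these idempotents, for distinct cosets, can lie in a common $\gR$-class of $S$: by the standard fact that $\gR$-related idempotents $e,f$ satisfy $ef=f$, we would get $e_{gH}e_{g'H}=e_{g'H}$, whose image under $\pi$ is $g'H$, yet $\pi(e_{gH})\pi(e_{g'H})=(gH)(g'H)=gH$ by the product rule in $L_H(G)$, forcing $gH=g'H$. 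Infinitely many cosets then produce infinitely many $\gR$-classes in $S$.

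For sufficiency, the plan is to revisit the inductive proof of Theorem~\ref{thm:cr}(r), this time tracking the provenance of the constructed $L_H(G)$ inside the original $S$. The preparatory Step~1 (passing to the union of components above $\gamma$) and Step~2 (quotienting by the congruence $\rho$) both keep us among divisors of $S$, and the induction basis explicitly builds $T=G_i\cup G_iz$ as a subsemigroup of $S$ isomorphic to $L_H(G_i)$. In the inductive step, the finite-$S_\delta$ sub-case descends to $J=\bigcup_{\alpha\ne\delta}S_\alpha$, the infinite-$S_\delta z$ sub-case descends to $S_\delta\cup S_\delta^1z$, and the uniformly-bounded sub-case descends to the contraction subsemigroup $U$; each of these is a subsemigroup of $S$, so that the inductive hypothesis will deliver $L_H(G)$ as a divisor of $S$.

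The one step of the proof of Theorem~\ref{thm:cr}(r) that leaves $S$ behind is the ``semi-infinite'' sub-case, where an infinite direct power $S^{\mathbb{N}}$ is invoked; this is the sole reason the corollary is qualified as ``modulo CFSG''. To exclude this sub-case we would use the argument sketched in Remark~\ref{rm:divisors}: the existence of arbitrarily large finite sets $S_\delta z$ with no uniform bound translates into Novikov--Adian quotients by normal subgroups of arbitrarily large finite index, hence into finite groups of a fixed number of generators and fixed exponent of unbounded order, in contradiction with the positive solution of the Restricted Burnside Problem (Zel'manov~\cite{Zel90,Zel91}, the Hall--Higman reduction~\cite{HaHi56}, and CFSG). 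The hard part is this unavoidable dependence on CFSG; without it, the present method yields only the weaker statement of Theorem~\ref{thm:cr}(r), which produces $L(G)$ in the variety generated by $S$ rather than as a divisor of $S$ itself.
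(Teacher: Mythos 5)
Your proposal is correct and follows essentially the same route as the paper: for sufficiency you track, exactly as in Remark~\ref{rm:divisors}, that every case of the inductive proof of Theorem~\ref{thm:cr}(r) except the ``semi-infinite'' one produces $L_H(G)$ as a divisor of $S$ itself, and you rule out that remaining case via the Restricted Burnside Problem and CFSG. For necessity the paper merely invokes that $\gR$-finiteness is inherited by divisors within periodic completely regular semigroups, while your idempotent-lifting argument (using that $\gR$-related idempotents $e,f$ satisfy $ef=f$, so that $(gH)(g'H)=g'H$ would force $gH=g'H$) proves the same fact in a self-contained way; both are fine.
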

Necessity of the condition of Corollary~\ref{cor:divisors} readily follows from the observation that the property of being $\gR$-finite is inherited by divisors within the class of periodic completely regular semigroups.

Of course, characterizations similar to Corollary~\ref{cor:divisors} can also be formulated for $\gL$-finite and $\gH$-finite completely regular semigroups.
\end{Remark}

\subsection{Characterizations.} Back to the problem of characterizing locally $\gH$-finite varieties of finite axiomatic rank, we are ready to state and to prove our ultimate result in this direction.

\begin{Theorem}
\label{thm:final_hlf}
A semigroup variety $\V$ of finite axiomatic rank is locally $\gH$-finite if and only $\V$ either is locally finite or it consists of locally finite extensions of some periodic completely regular ideal and contains none of the semigroups $L(G)$, $R(G)$ where $G$ is a Novikov--Adian group.
\end{Theorem}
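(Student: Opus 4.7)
The plan is to split on whether $\V$ contains a Novikov--Adian group and stitch together Proposition~\ref{thm:hlf}, Theorem~\ref{thm:cr}(h), Corollary~\ref{cor:locfin} and Proposition~\ref{prop:sapir}.

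For necessity, I would start with a locally $\gH$-finite $\V$. If $\V$ contains no Novikov--Adian group, Corollary~\ref{cor:locfin} immediately yields that $\V$ is locally finite, which is the first disjunct. Otherwise, Proposition~\ref{thm:hlf} tells me that every semigroup in $\V$ is a locally finite extension of a periodic completely regular ideal and that $\CR(\V)$ itself is locally $\gH$-finite. Applying Theorem~\ref{thm:cr}(h) to $\CR(\V)$ then excludes every $L(G)$ and $R(G)$ with $G$ a Novikov--Adian group; since these semigroups are themselves completely regular, their absence from $\CR(\V)$ amounts to their absence from $\V$, which is the second disjunct.

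For sufficiency, the locally finite case is trivial, so suppose instead that every member of $\V$ is a locally finite extension of a periodic completely regular ideal and that no $L(G)$ or $R(G)$ lies in $\V$. The first thing I would verify is that $\V$ is periodic: the additive semigroup $\mathbb{N}$ has no non-empty periodic ideal, so it cannot belong to $\V$, and Lemma~\ref{lm:periodic varieties} then gives periodicity. If $\V$ contains no Novikov--Adian group, I would apply Sapir's Proposition~\ref{prop:sapir}: groups in $\V$ are locally finite by hypothesis, and in each nilsemigroup $N\in\V$ the unique idempotent $0$ forces the only periodic completely regular ideal of $N$ to be $\{0\}$, so $N\cong N/\{0\}$ is forced to be locally finite; combined with the finite axiomatic rank hypothesis this delivers that $\V$ is locally finite, a fortiori locally $\gH$-finite. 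If $\V$ does contain a Novikov--Adian group, Theorem~\ref{thm:cr}(h) yields that $\CR(\V)$ is locally $\gH$-finite, and Proposition~\ref{thm:hlf} then combines this with the locally finite extension assumption to give local $\gH$-finiteness of $\V$.

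The substantive work has already been done in the results cited above, so this is essentially a routing exercise. The only step requiring some care is the Novikov--Adian-free branch of sufficiency, where one must notice that a nilsemigroup has no room for a nontrivial completely regular ideal and is therefore forced by the hypothesis to be locally finite, which is precisely what lets Sapir's theorem close the argument.
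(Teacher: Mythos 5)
Your proof is correct and follows essentially the same route as the paper's: both directions are assembled from Corollary~\ref{cor:locfin}, Proposition~\ref{thm:hlf}, Theorem~\ref{thm:cr}(h), and Proposition~\ref{prop:sapir}, with the same key observation that a nilsemigroup admits only $\{0\}$ as a completely regular ideal. The only (immaterial) differences are that the paper excludes $L(G)$, $R(G)$ in the necessity part directly via Lemma~\ref{lm:r-inf} rather than through Theorem~\ref{thm:cr}(h), and in sufficiency it uses Sapir's theorem contrapositively to show a Novikov--Adian group must be present rather than branching on its presence as you do.
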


\begin{proof}
\emph{Necessity}. If $\V$ excludes Novikov--Adian groups, then $\V$ is locally finite by Corollary~\ref{cor:locfin}. If $\V$ contains a Novikov--Adian group, Proposition~\ref{thm:hlf} applies, yielding that $\V$ consists of locally finite extensions of some periodic completely regular ideal. The fact that $\V$ contains none of the semigroups $L(G)$, $R(G)$ where $G$ is a Novikov--Adian group follows from Lemma~\ref{lm:r-inf} and its dual.

\smallskip

\emph{Sufficiency}. If $\V$ is locally finite, nothing is to prove. Suppose that $\V$ is not locally finite. Then $\V$ consists of locally finite extensions of a periodic completely regular ideal. In particular, $\V$ is periodic. By Lemma~\ref{lm:crv} the class $\CR(\V)$ of completely regular semigroups in $\V$ forms a variety, and this variety is locally $\gH$-finite by Theorem~\ref{thm:cr}(h). The only completely regular ideal of a nilsemigroup is $\{0\}$ whence every nilsemigroup being a locally finite extension of a completely regular ideal is locally finite. Thus, all nilsemigroups in $\V$ are locally finite, and Proposition~\ref{prop:sapir} implies that $\V$ must contain a Novikov--Adian group. Now Proposition~\ref{thm:hlf} guarantees that $\V$ is locally $\gH$-finite.
\end{proof}

The very same arguments, with Proposition~\ref{thm:rlf} and Theorem~\ref{thm:cr}(r) used instead of Proposition~\ref{thm:hlf} and respectively Theorem~\ref{thm:cr}(h), lead to the following characterization of locally $\gR$-finite semigroup varieties of finite axiomatic rank.

\begin{Theorem}
\label{thm:final_rlf}
A semigroup variety $\V$ of finite axiomatic rank is locally $\gR$-finite if and only if $\V$ either is locally finite or it consists of locally finite extensions of an ideal generated by a periodic completely regular right ideal and contains none of the semigroups $L(G)$, where $G$ is a Novikov--Adian group.\qed
\end{Theorem}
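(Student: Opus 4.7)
The plan is to mirror, virtually verbatim, the proof of Theorem~\ref{thm:final_hlf}, replacing the two-sided completely regular ideal by a one-sided one generated right ideal and swapping in the $\gR$-versions of the reductions.

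For \emph{necessity}, I would split on whether $\V$ contains a Novikov--Adian group. If it does not, then by Corollary~\ref{cor:locfin} (applied with $\K=\gR$ and using that $\V$ has finite axiomatic rank) $\V$ is locally finite, giving the first alternative. If it does, Proposition~\ref{thm:rlf} applies directly and yields that every $S\in\V$ is a locally finite extension of an ideal of the form $SR$ with $R$ a periodic completely regular right ideal. The remaining obligation is to exclude the semigroups $L(G)$ with $G$ a Novikov--Adian group: this is immediate from Lemma~\ref{lm:r-inf}, which says that $L(G)$ is finitely generated and $\gR$-infinite, contradicting local $\gR$-finiteness.

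For \emph{sufficiency}, the non-trivial case is when $\V$ is not locally finite. Then by hypothesis every $S\in\V$ is a locally finite extension of some periodic completely regular ideal; in particular $\V$ is periodic, so by Lemma~\ref{lm:crv} the class $\CR(\V)$ is a subvariety, and by Theorem~\ref{thm:cr}(r) together with the hypothesis that $\V$ excludes all $L(G)$ for Novikov--Adian $G$, the variety $\CR(\V)$ is locally $\gR$-finite. Next I would observe that a nilsemigroup in $\V$ must be a locally finite extension of its completely regular ideal, but the only completely regular ideal of a nilsemigroup is $\{0\}$, so every nilsemigroup in $\V$ is locally finite. Proposition~\ref{prop:sapir} (applicable because $\V$ has finite axiomatic rank and is periodic) then forces $\V$ to contain a Novikov--Adian group, since otherwise all groups and all nilsemigroups of $\V$ would be locally finite and $\V$ itself would be locally finite, contrary to assumption. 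With a Novikov--Adian group present and the structural hypothesis on extensions in hand, the sufficiency half of Proposition~\ref{thm:rlf} then delivers that $\V$ is locally $\gR$-finite.

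No step is genuinely difficult here because all the heavy lifting has already been done in Proposition~\ref{thm:rlf} and Theorem~\ref{thm:cr}(r); the only mild subtlety is the little argument that nilsemigroups which are locally finite extensions of a completely regular ideal are automatically locally finite, which is needed to legitimately invoke Proposition~\ref{prop:sapir} in the sufficiency direction. Everything else is bookkeeping, chaining the previously established reductions in the same order as in the proof of Theorem~\ref{thm:final_hlf}.
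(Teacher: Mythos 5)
Your proposal follows exactly the route the paper takes: the paper's own ``proof'' of this theorem is literally the statement that one repeats the argument of Theorem~\ref{thm:final_hlf} with Proposition~\ref{thm:rlf} and Theorem~\ref{thm:cr}(r) in place of Proposition~\ref{thm:hlf} and Theorem~\ref{thm:cr}(h), and that is what you do.

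One small inaccuracy in your sufficiency half is worth correcting: you paraphrase the hypothesis as ``every $S\in\V$ is a locally finite extension of some periodic completely regular \emph{ideal}''. The actual hypothesis gives an ideal of the form $SR$ with $R$ a periodic completely regular \emph{right} ideal, and $SR$ need not itself be completely regular (already for $S=N_2^r$ one has $SR=S$, which is not a union of groups). This affects the two places where you use that paraphrase. For periodicity, argue instead that $\mathbb{N}$ has no idempotents and hence no nonempty periodic completely regular right ideal, so $\mathbb{N}\notin\V$ and Lemma~\ref{lm:periodic varieties} applies (or note directly that $SR$ is periodic because powers of $tr$ have the form $t(rt)^{k}r$ with $rt\in R$). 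For the nilsemigroup step, observe that the only periodic completely regular right ideal of a nilsemigroup $N$ is $\{0\}$, so $NR=\{0\}$ and $N\cong N/NR$ is locally finite. With these two cosmetic repairs the rest of your chain --- Theorem~\ref{thm:cr}(r) for $\CR(\V)$, Proposition~\ref{prop:sapir} to produce a Novikov--Adian group, and then the sufficiency half of Proposition~\ref{thm:rlf} --- goes through exactly as in the paper.
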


Of course, a characterization of locally $\gL$-finite semigroup varieties of finite axiomatic rank is given by the dual of Theorem~\ref{thm:final_rlf}.

\section{Locally $\K$-finite varieties for $\K\in\{\gD,\gJ\}$}
\label{sec:dj}

\subsection{Reduction to varieties of semigroups with central idempotents.} By Corollary~\ref{cor:dj} local $\gD$-finiteness and local $\gJ$-finiteness define the same class of varieties. Taking this into account, we will consider only locally $\gJ$-finite varieties in this section. Similarly to the trajectory of Section~\ref{sec:rlh}, our journey towards a characterization of locally $\gJ$-finite semigroup varieties of finite axiomatic rank starts with a reduction step, but this time we aim to reduce the problem to the case of varieties of semigroups with central idempotents.

\begin{Proposition}
\label{prop:pn_out}
Let $\V$ be a locally $\gJ$-finite variety that contains a Novikov--Adian group. Then at least one of the semigroups $N_2^\ell$ and $N_2^1$ does not belong to $\V$.
\end{Proposition}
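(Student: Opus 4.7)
The plan is to argue by contradiction: suppose that both $N_2^1$ and $N_2^\ell$ belong to $\V$. By Lemma~\ref{lm:nag}, $\V$ contains a Novikov--Adian group $G$, an infinite finitely generated subgroup $H\leq G$ with generators $g_1,\dots,g_n$, and an element $y\in G$ satisfying $H\cap y^{-1}Hy=\{1\}$. Since the finite semigroup $N_2^1\times N_2^\ell$ satisfies $(N_2^1\times N_2^\ell)^2=N_2^1\times N_2^\ell$, Lemma~\ref{lm:RRW} guarantees that $T:=N_2^1\times N_2^\ell\times G$ is a finitely generated member of $\V$. The strategy will then mirror the template of Propositions~\ref{prop:c_out} and~\ref{prop:p_out}: construct a finitely generated subsemigroup $S\subseteq T$ and exhibit in $S$ an infinite family of pairwise non-$\gJ$-related elements, yielding the desired contradiction.

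A natural candidate is the subsemigroup generated by the elements $\alpha_i:=(e_1,e_\ell,g_i)$ together with a mixed generator $p:=(a_1,a_\ell,y)$. The asymmetric behaviour of $N_2^\ell$---in particular the relation $a_\ell\cdot e_\ell=0$---means that any $\alpha$-suffix applied to a $p$-containing word collapses the $N_2^\ell$-coordinate to $0_\ell$ while leaving the $N_2^1$-coordinate at $a_1$, thereby splitting the non-zero elements of $S$ into several disjoint ``rows'' of forms $(e_1,e_\ell,h)$, $(a_1,a_\ell,h_1 y)$, and $(a_1,0_\ell,h_1 y h_2)$. Within a given row, the condition $H\cap y^{-1}Hy=\{1\}$ is used to prevent two-sided sandwich products $\alpha u \beta$ from equating elements whose $G$-coordinates lie in different $H$-double-cosets: a direct calculation of $S^1 u S^1$ in each row reduces the question of $\gJ$-equivalence to that of double-coset equality in $G$.

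The main obstacle is that a single mixed generator $p$ gives access only to the single double coset $HyH$ in the relevant row, and hence produces only finitely many $\gJ$-classes. To circumvent this one enlarges the generating set, either by adjoining additional mixed generators of the form $(a_1,a_\ell,z)$ whose $G$-coordinates lie in pairwise distinct $H$-double-cosets, or by bringing in further ``structural'' generators drawn from the $4$-element subsemigroup of $N_2^1\times N_2^\ell$ produced by $(e_1,e_\ell)$ and $(a_1,a_\ell)$ (whose extra element $(a_1,0_\ell)$ behaves simultaneously like an $N_2^1$-nilpotent and an $N_2^\ell$-zero). The delicate technical heart of the proof is then verifying, by a careful coset-separation argument using $H\cap y^{-1}Hy=\{1\}$, that with such a suitably chosen finite generating set infinitely many principal two-sided ideals $S^1 u S^1$ arise, so that $S$ has infinitely many $\gJ$-classes and the supposed local $\gJ$-finiteness of $\V$ fails.
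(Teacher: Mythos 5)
Your overall strategy (argue by contradiction via Lemma~\ref{lm:nag}, pass to a finitely generated subsemigroup of $N_2^1\times N_2^\ell\times G$, and exhibit infinitely many $\gJ$-classes using $H\cap y^{-1}Hy=\{1\}$) is the right one, and you correctly diagnose why the single mixed generator $(a_1,a_\ell,y)$ fails: with both nilpotent entries packed into one generator, any word containing it twice is annihilated, and the surviving elements $(a_1,a_\ell,hy)$ and $(a_1,0_\ell,hyh')$ have principal ideals governed by \emph{double} cosets, so only the one class coming from $HyH$ arises. However, neither of your proposed repairs closes the gap. Adjoining finitely many further mixed generators $(a_1,a_\ell,z)$ still produces only finitely many double cosets, hence finitely many $\gJ$-classes, while adjoining infinitely many destroys finite generation. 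The element $(a_1,0_\ell)$ does not help either: since $a_1e_1=a_1$ in $N_2^1$, an element $(a_1,0_\ell,g)$ survives right multiplication by the idempotent generators, so its $\gJ$-class is again a full double coset $HgH$ and no infinite family appears.

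The missing idea is to split the two nilpotents between \emph{two different} mixed generators, placing the $N_2^\ell$-nilpotent $a$ (which annihilates everything on its right, $ax=0$ for all $x\in N_2^\ell$) in one of them and the $N_2^1$-nilpotent in the other. Concretely, the paper generates $S\subseteq N_2^\ell\times N_2^1\times G$ by $(a,f,1)$, $(e,b,y)$ and $(e,f,g_i)$, where $\{f,b,0\}$ is a copy of $N_2^1$. The products $(e,b,yh)(a,f,1)=(a,b,yh)$ are then killed by right multiplication by any element of $S$ (because $ax=0$ in $N_2^\ell$) and killed by left multiplication by anything outside $(e,f)\times H$ (because $a^2=b^2=0$), so a relation $(a,b,yh_1)=p\,(a,b,yh_2)\,q$ forces $q=1$ and $p=(e,f,h_3)$ with $h_3\in H$, whence $yh_1=h_3yh_2$ and $h_1=h_2$ by $H\cap y^{-1}Hy=\{1\}$. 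This yields $|H|$ pairwise non-$\gJ$-related elements. Your proposal never reaches a configuration with this one-sided annihilation, so as written it does not produce the required infinite family, and the ``delicate technical heart'' you defer is precisely where the argument currently fails.
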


\begin{proof}
Arguing by contradiction, assume that $\V$ contains both $N_2^\ell$ and $N_2^1$. Recall that in Subsection~\ref{subsec:3-element} we defined both these semigroups on the same set $\{e,a,0\}$; here in order to improve readability, we consider a copy of $N_2^1$ with the base set
$\{f,b,0\}$ in which $f^2=f$, $fb=bf=b$, and all other products equal 0. By Lemma~\ref{lm:nag} the variety $\V$ contains a group $G$ such that some infinite finitely generated subgroup $H$ of $G$ and some $y\in G$ satisfy $H\cap y^{-1}Hy=\{1\}$. Let the elements $g_1,\dots,g_n$ generate the subgroup $H$. Consider the direct product $N_2^\ell\times N_2^1\times G\in\V$ and let $S$ be its subsemigroup generated by the triples $(a,f,1)$, $(e,b,y)$, and $(e,f,g_i)$ for all $i=1,\dots,n$. It is not hard to verify that
\[
S=\left((e,f)\times H\right)\cup\left((a,f)\times H\right)\cup\left((e,b)\times HyH\right)\cup\left((a,b)\times HyH\right)\cup J,
\]
where $(e,f)\times H$ etc. serves as an abbreviation for $\{e\}\times\{f\}\times H$ etc., and $J$ stands for the set of all triples from $S$ whose first or second entries are equal to 0.

If two triples $(a,b,yh_1)$ and $(a,b,yh_2)$, where $h_1,h_2\in H$, are $\gJ$-related in $S$, then
\begin{equation}
\label{eq:division}
(a,b,yh_1)=p(a,b,yh_2)q\ \text{ for some }\ p,q\in S^1.
\end{equation}
Since $ax=0$ for every $x\in N_2^\ell$, we conclude that $(a,b,yh_2)t\in J$ for all $t\in S$ whence $q=1$ in~\eqref{eq:division}. Further, since $a^2=0$ in $N_2^\ell$ and $b^2=0$ in $N_2^1$, we see that $s(a,b,yh_2)\in J$ for every $s\in S\setminus\left((e,f)\times H\right)$. Hence $p=(e,f,h_3)$ for some $h_3\in H$. From~\eqref{eq:division} we deduce that $yh_1=h_3yh_2$, and therefore, $y^{-1}h_3y=h_1h_2^{-1}$. Since $H\cap y^{-1}Hy=\{1\}$, we conclude that $h_1h_2^{-1}=1$ whence $h_1=h_2$. This means that $(a,b,yh_1)$ and $(a,b,yh_2)$ cannot be $\gJ$-related in $S$ whenever $h_1\ne h_2$. We see that $S$ has at least $|H|$ different $\gJ$-classes. Since the subgroup $H$ is infinite, the variety $\V$ is not locally $\gJ$-finite, a contradiction.
\end{proof}

Recall that $N_2^r$ is the dual of the semigroup $N_2^\ell$. In the proof of Propositions~\ref{prop:ppi_out} and~\ref{prop:l(g)_out}, it is convenient to assume that $N_2^r$ has $\{f,b,0\}$ as the base set and $f^2=f$, $bf=b$, while all other products equal 0.

\begin{Proposition}
\label{prop:ppi_out}
Let $\V$ be a locally $\gJ$-finite variety that contains a Novikov--Adian group. Then at least one of the semigroups $N_2^\ell$ and $N_2^r$ does not belong to $\V$.
\end{Proposition}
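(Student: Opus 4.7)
The plan is to mimic the argument used in the proof of Proposition~\ref{prop:pn_out}, with one crucial simplification that bypasses the machinery of Lemma~\ref{lm:nag} entirely. Arguing by contradiction, assume both $N_2^\ell$ and $N_2^r$ belong to $\V$, realizing $N_2^r$ on the base set $\{f,b,0\}$ as stipulated just before the statement. Let $G\in\V$ be a Novikov--Adian group with generators $g_1,\dots,g_n$, and consider the subsemigroup $S$ of $N_2^\ell\times N_2^r\times G\in\V$ generated by $(a,f,1)$, $(e,b,1)$, and the triples $(e,f,g_i)$ for $i=1,\dots,n$. This $S$ is finitely generated.

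The first step is to describe $S$. Using that $G$ is periodic, the triples $(e,f,g_i)$ already yield $\{e\}\times\{f\}\times G\subseteq S$; then $(e,f,h)(a,f,1)=(a,f,h)$ gives $\{a\}\times\{f\}\times G\subseteq S$, $(e,b,1)(e,f,h)=(e,b,h)$ gives $\{e\}\times\{b\}\times G\subseteq S$, and finally $(e,b,h_1)(a,f,h_2)=(a,b,h_1h_2)$ gives $\{a\}\times\{b\}\times G\subseteq S$. Every other product lands in the ideal $J$ of triples with a zero coordinate, since $a^2=ae=0$ in $N_2^\ell$ and $b^2=fb=0$ in $N_2^r$.

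The key observation --- which is what makes the argument simpler than Proposition~\ref{prop:pn_out} --- is that within $N_2^\ell\times N_2^r$, the pair $(a,b)$ behaves as a two-sided annihilator under semigroup multiplication: since $x\cdot b=0$ in $N_2^r$ for every $x\in N_2^r$, left multiplication by any element of $S$ kills the middle coordinate of $(a,b,g)$; and since $a\cdot x=0$ in $N_2^\ell$ for every $x\in N_2^\ell$, right multiplication by any element of $S$ kills the first coordinate. Consequently, if $(a,b,g_1)=p(a,b,g_2)q$ with $p,q\in S^1$, then both $p$ and $q$ are forced to equal the adjoined identity, which gives $g_1=g_2$. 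Hence the infinite set $\{a\}\times\{b\}\times G\subseteq S$ splits into $|G|$ pairwise distinct $\gJ$-classes, making $S$ finitely generated but $\gJ$-infinite and contradicting local $\gJ$-finiteness of $\V$.

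The only non-routine step is the structural computation for $S$, and even this amounts to a brief case check; the $\gJ$-analysis is then immediate. The main conceptual point, and the reason I expect no serious obstacle, is that the one-sided multiplication rules of $N_2^\ell$ and $N_2^r$ conspire to trivialize the coset analysis, so that the delicate condition $H\cap y^{-1}Hy=\{1\}$ supplied by Lemma~\ref{lm:nag} (and essential in Proposition~\ref{prop:pn_out}) is not needed here --- we never have to work inside a proper infinite subgroup of $G$, and can use $G$ itself throughout.
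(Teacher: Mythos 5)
Your proof is correct and is essentially the paper's own argument: the paper also takes the product $N_2^\ell\times N_2^r\times G$ with $G$ a Novikov--Adian group and uses exactly your key observation that $s(a,b,g)t$ always acquires a zero coordinate, so that the triples $(a,b,g)$ form $|G|$ singleton $\gJ$-classes, with no need for Lemma~\ref{lm:nag}. The only (harmless) difference is that the paper works with the full direct product and gets finite generation from Lemma~\ref{lm:RRW}, whereas you build an explicit finitely generated subsemigroup containing $\{a\}\times\{b\}\times G$ by hand.
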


\begin{proof}
Arguing by contradiction, assume that $\V$ contains both $N_2^\ell$ and $N_2^r$. Let $G$ be a Novikov--Adian group in $\V$. The direct product $S:=N_2^\ell\times N_2^r\times G\in\V$ is finitely generated by Lemma~\ref{lm:RRW}. Denote by $J$ the ideal of $S$ consisting of all triples whose first or second entries are equal to 0. As $ax=0$ for all $x\in P$ and $yb=0$ for all $y\in N_2^r$, we conclude that $s(a,b,g)t\in J$ for all $g\in G$ and all $s,t\in S$. Hence, no triples $(a,b,g)$ and $(a,b,h)$ with $g\ne h$ are $\gJ$-related in $S$. Thus, $S$ has at least $|G|$ different $\gJ$-classes whence $\V$ is not locally $\gJ$-finite, a contradiction.
\end{proof}

From Corollary~\ref{cor:locfin}, Proposition~\ref{prop:pn_out} and its dual, along with Proposition~\ref{prop:ppi_out}, we immediately obtain
\begin{Corollary}
\label{cor:1of3}
If a semigroup variety $\V$ of finite axiomatic rank is locally $\gJ$-finite, then either $\V$ is locally finite or it contains at most one of the semigroups $N_2^1$, $N_2^\ell$, and $N_2^r$.
\end{Corollary}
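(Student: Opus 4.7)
The plan is to split on whether $\V$ contains a Novikov--Adian group. If $\V$ contains no Novikov--Adian group, then since $\V$ has finite axiomatic rank, Corollary~\ref{cor:locfin} immediately gives that $\V$ is locally finite, which is one of the two disjuncts in the conclusion.

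Otherwise, $\V$ contains a Novikov--Adian group, and I want to show $\V$ contains at most one of the three semigroups $N_2^1$, $N_2^\ell$, $N_2^r$. I would argue by contradiction: suppose $\V$ contains at least two of them. There are three cases for the pair, and each is ruled out by exactly one of the three available results. Specifically, if $\{N_2^\ell, N_2^1\}\subseteq\V$, this contradicts Proposition~\ref{prop:pn_out}; if $\{N_2^r, N_2^1\}\subseteq\V$, this contradicts the dual of Proposition~\ref{prop:pn_out} (where the roles of left and right are swapped, so $N_2^\ell$ is replaced by $N_2^r$); and if $\{N_2^\ell, N_2^r\}\subseteq\V$, this contradicts Proposition~\ref{prop:ppi_out}. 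Since all three pair-cases lead to a contradiction, at most one of $N_2^1, N_2^\ell, N_2^r$ lies in $\V$.

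There is no real obstacle: the corollary is a purely Boolean combination of the three exclusion statements, and the only thing to verify is that the three pairwise exclusions collectively force ``at most one of the three members lies in $\V$''. The mild point worth double-checking is simply that the dual of Proposition~\ref{prop:pn_out} does say what is needed, namely that a locally $\gJ$-finite variety containing a Novikov--Adian group cannot contain both $N_2^r$ and $N_2^1$; this is legitimate because local $\gJ$-finiteness, the class of Novikov--Adian groups, and the semigroup $N_2^1$ are all self-dual notions, while $N_2^\ell$ dualizes to $N_2^r$.
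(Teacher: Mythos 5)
Your proof is correct and follows exactly the paper's route: the authors state that the corollary is immediate from Corollary~\ref{cor:locfin}, Proposition~\ref{prop:pn_out} and its dual, and Proposition~\ref{prop:ppi_out}, which is precisely the case split and pairwise-exclusion argument you give. Your check that the dualization is legitimate (local $\gJ$-finiteness, Novikov--Adian groups, and $N_2^1$ being self-dual, with $N_2^\ell$ dualizing to $N_2^r$) is also accurate.
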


In our next result we meet again semigroups of the form $L(G)$ where $G$ is a group. Recall that $L(G)$ was introduced in Section~\ref{sec:constructions} as a sort of ``duplication'' of the group $G$: it is the union of $G$ with the set of the left cosets of the trivial subgroup $E:=\{1\}$ in $G$. Following the convention adopted in the proof of Lemma~\ref{lm:right-left} for the semigroup $L^\flat(G)$, we simplify our notation for the elements of $L(G)$ by denoting the coset $gE$ by $\ovl{g}$. In this notation the multiplication in $L(G)$ becomes as follows: it extends the multiplication in $G$ and for all $g_1,g_2\in G$,
\[
g_1\ovl{g_2}:=\ovl{g_1g_2}\ \text{ and }\ \ovl{g_1}g_2=\ovl{g_1}\ovl{g_2}:=\ovl{g_1}.
\]

\begin{Proposition}
\label{prop:l(g)_out}
Let a locally $\gJ$-finite variety $\V$ contain the semigroup $N_2^r$. Then $\V$ contains none of the semigroups $L(G)$ with $G$ being a Novikov--Adian group.
\end{Proposition}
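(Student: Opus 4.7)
The plan is to argue by contradiction: assume $\V$ contains both $N_2^r$ and $L(G)$ for some Novikov--Adian group $G$, and exhibit a finitely generated $\gJ$-infinite subsemigroup of $N_2^r \times L(G) \in \V$. Note that, unlike Proposition~\ref{prop:pn_out}, we do not need to invoke Lemma~\ref{lm:nag}: the semigroup $L(G)$ already carries enough structure (a left-zero ideal indexed by $G$) to separate $\gJ$-classes, so we can work directly with any Novikov--Adian group $G$ appearing in $\V$.

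Fix a finite generating set $g_1, \dots, g_n$ of $G$ and consider the subsemigroup $S$ of $N_2^r \times L(G)$ generated by the $n+1$ elements $(f, g_1), \dots, (f, g_n), (b, \ovl{1})$. A direct computation using the tables for $N_2^r$ (where $f^2 = f$, $bf = b$, and all other products vanish) and for $L(G)$ (where $g_1 \ovl{g_2} = \ovl{g_1 g_2}$ and $\ovl{g_1} x = \ovl{g_1}$ for every $x$) shows that the non-ideal elements of $S$ (those with first coordinate different from $0$) are exactly
\[
\{(f, g) \mid g \in G\} \cup \{(b, \ovl{g}) \mid g \in G\},
\]
where $(b, \ovl{g})$ arises as $(f,g)(b, \ovl{1})$ and products of the $(f, g_i)$ yield all $(f, g)$ with $g \in G$.

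The key claim is that $(b, \ovl{g})$ and $(b, \ovl{h})$ are $\gJ$-related in $N_2^r \times L(G)$ only if $g = h$. For this I would observe two things. First, for any element $(x, y) \in N_2^r \times L(G)$ we have $xb = 0$ in $N_2^r$, so $(x, y)(b, \ovl{h}) = (0, y\ovl{h})$ lies in the (proper) ideal of elements with first coordinate $0$; hence, in any factorization $(b, \ovl{g}) = p\,(b, \ovl{h})\,q$ with $p, q \in (N_2^r \times L(G))^1$, the prefix $p$ must be $1$. Second, right multiplication preserves the second coordinate $\ovl{h}$: for every $(x, y)$ we compute $(b, \ovl{h})(x, y) = (bx, \ovl{h})$, and for this to equal $(b, \ovl{g})$ one needs $bx = b$ and $\ovl{h} = \ovl{g}$, forcing $g = h$. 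Since $\gJ$-relatedness in $S$ implies $\gJ$-relatedness in the ambient semigroup $N_2^r \times L(G)$, the same conclusion holds inside $S$.

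Thus $S$ has at least $|G|$ distinct $\gJ$-classes; since $G$ is infinite and $S$ is finitely generated, this contradicts local $\gJ$-finiteness of $\V$. The only delicate point in the argument is the bookkeeping that confirms left multiplication always falls into the ideal while right multiplication cannot alter the coset $\ovl{h}$, both of which reduce to the one-line identities $xb = 0$ in $N_2^r$ and $\ovl{h}y = \ovl{h}$ in $L(G)$, so there is no substantive obstacle beyond this verification.
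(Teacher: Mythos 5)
Your overall strategy (contradiction via a finitely generated $\gJ$-infinite semigroup in $N_2^r\times L(G)$, separating the elements $(b,\ovl{g})$ by noting that left multiplication kills the first coordinate while right multiplication cannot change the coset) is exactly the paper's, and your ``key claim'' paragraph is correct as a statement about the full direct product. The gap is in the construction of $S$: the subsemigroup generated by $(f,g_1),\dots,(f,g_n),(b,\ovl{1})$ does \emph{not} contain the elements $(b,\ovl{g})$ for $g\ne 1$. Indeed, $(f,g)(b,\ovl{1})=(fb,\,g\ovl{1})=(0,\ovl{g})$ because $fb=0$ in $N_2^r$ (by your own multiplication table, the only nonzero products are $f^2=f$ and $bf=b$), so this element already lies in the ideal $J$; and the only products with first coordinate $b$ are those of the form $(b,\ovl{1})(f,g_{i_1})\cdots(f,g_{i_k})=(b,\ovl{1})$, since $\ovl{1}x=\ovl{1}$. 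Hence $S\setminus J=\{(f,g)\mid g\in G\}\cup\{(b,\ovl{1})\}$ and $J=\{(0,\ovl{g})\mid g\in G\}$. Worse, your $S$ is actually $\gJ$-finite: $(f,k)(0,\ovl{h})=(0,\ovl{kh})$, so all of $J$ is a single $\gL$-class, all the $(f,g)$ form one $\gJ$-class, and $(b,\ovl{1})$ is a third; the intended contradiction never materializes.

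The repair is either to work with the full direct product $S:=N_2^r\times L(G)$, which \emph{does} contain every $(b,\ovl{g})$ and is finitely generated by Lemma~\ref{lm:RRW} (note $(N_2^r)^2=N_2^r$ and $L(G)$ is finitely generated by Lemma~\ref{lm:r-inf}) --- this is what the paper does --- or, if you insist on an explicit subsemigroup, to enlarge your generating set, e.g.\ to $(f,g_1),\dots,(f,g_n),(b,1),(f,\ovl{1})$, so that $(b,1)(f,g)(f,\ovl{1})=(b,\ovl{g})$ is actually generated. With either repair your separation argument goes through verbatim. You are right that Lemma~\ref{lm:nag} is not needed here; the paper likewise uses an arbitrary Novikov--Adian group $G$ in this proposition.
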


\begin{proof}
Arguing by contradiction, assume that $\V$ contains some $L(G)$ where $G$ is a Novikov--Adian group. Consider the direct product $S:=N_2^r\times L(G)\in\ V$; it is finitely generated by Lemma~\ref{lm:RRW}. Denote by $J$ the ideal of $S$ consisting of all pairs whose first entry is equal to 0. Since $yb=0$ for all $y\in N_2^r$, we see that $s(b,\ovl{g})\in J$ for all $g\in G$ and all $s\in S$. Hence, if $(b,\ovl{g})$ and $(b,\ovl{h})$ are $\gJ$-related in $S$, then either $(b,\ovl{g})=(b,\ovl{h})$ or
$(b,\ovl{g})=(b,\ovl{h})t$ for some $t\in S$. Since $\ovl{h}x=\ovl{h}$ for every $x\in L(G)$, we see that either $(b,\ovl{h})t\in J$ or $(b,\ovl{h})t=(b,\ovl{h})$. Thus, $(b,\ovl{g})$ and $(b,\ovl{h})$ can only be $\gJ$-related provided that $\ovl{g}=\ovl{h}$, whence  $S$ has at least $|G|$ different $\gJ$-classes and $\V$ is not locally $\gJ$-finite, a contradiction.
\end{proof}

\begin{Proposition}
\label{prop:jlf1}
Let $\V$ be a semigroup variety.

\emph{(o)} If $\V$ excludes the semigroups $N_2^1$, $N_2^\ell$, and $N_2^r$, then $\V$ is locally $\gJ$-finite if and only if $\V$ consists of locally finite extensions of a completely regular ideal.

\emph{(r)} If $\V$ contains the semigroup $N_2^r$, but none of $N_2^\ell$ and $N_2^1$ and has finite axiomatic rank, then $\V$ is locally $\gJ$-finite if and only if $\V$ is locally $\gR$-finite.

($\ell$) If $\V$ contains the semigroup $N_2^\ell$, but none of $N_2^r$ and $N_2^1$ and has finite axiomatic rank, then $\V$ is locally $\gJ$-finite if and only if $\V$ is locally $\gL$-finite.
\end{Proposition}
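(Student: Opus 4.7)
Part (o) splits cleanly in two. For necessity, my plan is to combine Lemma~\ref{lm:periodic} (which makes $\V$ periodic) with Corollary~\ref{cor:grSideal} (which, under exclusion of all three of $N_2^1$, $N_2^\ell$, $N_2^r$, declares $\Gr S$ an ideal in every $S \in \V$) and Lemma~\ref{lm:extension} (which, since $\Gr S$ contains $E(S)$, exhibits $S$ as a locally finite extension of the completely regular ideal $\Gr S$). For sufficiency, given a finitely generated $S \in \V$ that is a locally finite extension of a completely regular ideal $J$, the Rees quotient $S/J$ is finitely generated and locally finite, hence finite; this makes $J$ a large subsemigroup of $S$, so by Lemma~\ref{lm:Jura} it is finitely generated, by Corollary~\ref{cor:cr-ljf} it is $\gJ$-finite, and by Lemma~\ref{lm:small extension} so is $S$.

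For part (r), the easy implication comes from Lemma~\ref{lm:inclusions}(ii),(iii) together with Corollary~\ref{cor:dj}: local $\gR$-finiteness implies local $\gD$-finiteness, which coincides with local $\gJ$-finiteness. For the converse I plan to verify the hypotheses of Theorem~\ref{thm:final_rlf}. If $\V$ is locally finite, there is nothing to prove; otherwise, Corollary~\ref{cor:locfin} (which uses the finite axiomatic rank assumption) forces $\V$ to contain a Novikov--Adian group. Then Proposition~\ref{prop:l(g)_out}, invoked because $N_2^r \in \V$, rules out every $L(G)$ with $G$ a Novikov--Adian group. Meanwhile, the exclusions $N_2^1,N_2^\ell\notin\V$ together with Lemma~\ref{lm:grSideal} ensure that $R:=\Gr S$ is a periodic completely regular right ideal for each $S \in \V$; since $SR \supseteq E(S)$, Lemma~\ref{lm:extension} presents $S$ as a locally finite extension of $SR$. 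Theorem~\ref{thm:final_rlf} now delivers local $\gR$-finiteness. Part~($\ell$) is entirely dual, using the dual of Proposition~\ref{prop:l(g)_out} and the dual of Theorem~\ref{thm:final_rlf}.

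The main obstacle is the necessity direction in parts (r) and ($\ell$): one must notice that the asymmetric hypothesis (e.g.\ in (r), excluding $N_2^\ell$ and $N_2^1$ but allowing $N_2^r$) activates exactly the one-sided version of Lemma~\ref{lm:grSideal} required to feed Theorem~\ref{thm:final_rlf}, and that Proposition~\ref{prop:l(g)_out} is precisely what rules out the only remaining ``forbidden objects'' from the local $\gR$-finiteness characterization. Once this lining up of hypotheses is observed, the argument is an assembly of already established results with no new construction needed.
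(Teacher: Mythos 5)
Your proposal is correct and follows essentially the same route as the paper: Corollary~\ref{cor:grSideal}/Lemma~\ref{lm:grSideal} plus Lemma~\ref{lm:extension} for the structural directions, Lemma~\ref{lm:Jura}, Corollary~\ref{cor:cr-ljf} and Lemma~\ref{lm:small extension} for sufficiency in (o), and Proposition~\ref{prop:l(g)_out} feeding Theorem~\ref{thm:final_rlf} in (r). The only difference is cosmetic: your case split on local finiteness and the appeal to Corollary~\ref{cor:locfin} in part (r) are unnecessary, since the paper verifies the second disjunct of Theorem~\ref{thm:final_rlf} directly.
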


\begin{proof}
(o) If $\V$  excludes $N_2^1$, $N_2^\ell$, and $N_2^r$, then Corollary~\ref{cor:grSideal} applies to each semigroup $S\in\V$ whence the set $\Gr S$ forms a completely regular ideal in $S$. If $\V$ is locally $\gJ$-finite, $S$ is a locally finite extension of $\Gr S$ by Lemma~\ref{lm:extension}.

Conversely, suppose that every member of $\V$ is a locally finite extension of a completely regular ideal. We take an arbitrary finitely generated semigroup $S$ in $\V$ and let $J$ be a completely regular ideal of $S$ such that the Rees quotient $S/J$ is locally finite. Since the semigroup $S/J$ is finitely generated, it is finite. By Lemma~\ref{lm:Jura}, $J$~is a finitely generated semigroup, and by Corollary~\ref{cor:cr-ljf} $J$ is $\gJ$-finite. Since $S$ is a small extension of $J$, Lemma~\ref{lm:small extension} ensures that $S$ is $\gJ$-finite as well.

\smallskip

(r) If $\V$ contains neither $N_2^\ell$ nor $N_2^1$, Lemma~\ref{lm:grSideal} applies to each semigroup $S\in\V$ whence the set $R:=\Gr S$ is a a right ideal in $S$. Then $SR$ is an ideal of $S$ that contains every idempotent of $S$. Now assume that $\V$ is locally $\gJ$-finite. Then Lemma~\ref{lm:extension} applies, yielding that $S$ is  a locally finite extension of $SR$. Since $\V$ contains the semigroup $N_2^r$, by Proposition~\ref{prop:l(g)_out} $\V$ contains none of the semigroups $L(G)$ with $G$ being a Novikov--Adian group. We see that all conditions of Theorem~\ref{thm:final_rlf} are satisfied, and therefore $\V$ is locally $\gR$-finite.

The converse statement follows from Lemma~\ref{lm:inclusions}.

\smallskip

($\ell$) Dual to (r).
\end{proof}

\subsection{Locally $\gJ$-finite varieties of periodic semigroups with central idempotents.} In view of Corollary~\ref{cor:1of3}, Proposition~\ref{prop:jlf1} provides characterizations of all locally $\gJ$-finite varieties of finite axiomatic rank, except those containing the semigroup $N_2^1$ but none of the semigroups $N_2^\ell$ and $N_2^r$. By Corollary~\ref{cor:central} varieties with the latter property consist of semigroups with central idempotents. Thus, it remains to characterize locally $\gJ$-finite varieties of semigroups with central idempotents. Such a characterization is provided by the next theorem, which, like Theorem~\ref{thm:cr}, applies to varieties of arbitrary rank.

\begin{Theorem}
\label{thm:central}
A variety $\V$ of periodic semigroups with central idempotents is  locally $\gJ$-finite if and only if all nilsemigroups in $\V$ are locally finite and $\V$ contains none of the semigroups $L^\flat(G)$, $R^\flat(G)$ where $G$ is a Novikov--Adian group.
\end{Theorem}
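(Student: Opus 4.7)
The plan is to prove both directions separately, with the real work concentrated in two spots.

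For necessity, the first half is immediate: if $\V$ is locally $\gJ$-finite, Lemma~\ref{lm:nil} directly yields that every nilsemigroup of $\V$ is locally finite. The second half, that $L^\flat(G), R^\flat(G) \notin \V$ for any Novikov--Adian $G$, I would prove by contraposition. Starting from $L^\flat(G_0) \in \V$ with $G_0$ Novikov--Adian, Lemma~\ref{lm:nag0} lets me replace $G_0$ by a Novikov--Adian $G$ that possesses an infinite finitely generated subgroup $H$ and an element $y \in G$ with $H \cap y^{-1}Hy = \{1\}$. Next I would build a finitely generated $\gJ$-infinite subsemigroup inside $L^\flat(G)$ or a small product such as $L^\flat(G) \times N_2^1$ (the latter lies in $\V$ because $\{1, gE, 0\}$ is a subsemigroup of $L^\flat(G)$ isomorphic to $N_2^1$). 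The guiding principle, in the spirit of Proposition~\ref{prop:c_out}, is that products of the shape $h_1 y h_2$ with $h_i \in H$ retain, through the condition $H \cap y^{-1}Hy = \{1\}$, enough rigidity that the only admissible $\gJ$-equivalences between the corresponding elements are the trivial ones; the $\flat$-feature (the ideal $\ovl G \cup \{0\}$ is a zero semigroup) is essential for killing the unwanted equivalences \emph{on both sides} simultaneously. The $R^\flat(G)$ case then collapses into the $L^\flat(G)$ case via Corollary~\ref{cor:same_variety}.

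For sufficiency, assume all nilsemigroups of $\V$ are locally finite and that $\V$ excludes $L^\flat(G)$ and $R^\flat(G)$ for every Novikov--Adian $G$. I would split on whether $N_2^1 \in \V$. If $N_2^1 \notin \V$, then since the central-idempotent hypothesis already forbids $N_2^\ell$ and $N_2^r$, Corollary~\ref{cor:grSideal} makes $\Gr S$ an ideal of every $S \in \V$; periodicity gives $\Gr S \supseteq E(S)$ and $S/\Gr S$ a nilsemigroup, and Proposition~\ref{prop:jlf1}(o) closes this case. If instead $N_2^1 \in \V$, I would argue by contradiction: given a finitely generated $\gJ$-infinite $S \in \V$, I would mirror the inductive scheme of the proof of Theorem~\ref{thm:cr}, letting the semilattice of archimedean components of $S$ play the role that the semilattice of completely simple components played there, and letting the local-finiteness of nilsemigroups in $\V$ play the role that complete regularity played there. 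The target would be to extract from the $\gJ$-infinity of $S$ a copy of $L^\flat(G)$ or $R^\flat(G)$ with $G$ a Novikov--Adian group inside $\var S \subseteq \V$, contradicting the hypothesis.

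The main obstacle is the construction in necessity. The subtlety is that $L^\flat(G)$ itself is $\gJ$-finite (only three $\gJ$-classes: $G$, $\ovl G$, $\{0\}$), so the finitely generated $\gJ$-infinite witness cannot be $L^\flat(G)$ on the nose; it must be carved out of an appropriate finite product, and the generators must be chosen so that the two-sided multiplications involved in testing $\gJ$-equivalence reduce to conjugation by elements of the diagonal copy of $H$, at which point $H \cap y^{-1}Hy = \{1\}$ becomes the separating principle producing $|H| = \infty$ distinct $\gJ$-classes. Getting the generators to couple the two sides tightly enough is the delicate technical step of the whole theorem.
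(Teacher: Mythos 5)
Your overall strategy coincides with the paper's, and the necessity half of your plan is essentially the paper's argument. The paper works inside $L^\flat(G)\times L^\flat_G(G)$, where $L^\flat_G(G)$ is the homomorphic image of $L^\flat(G)$ supplied by Lemma~\ref{lm:subgroups}, taking the subsemigroup generated by the diagonal pairs $(h_i,h_i)$, by $(\ovl{1},1)$ and by $(y,c)$; your $L^\flat(G)\times N_2^1$ would do the same job (and $N_2^1\in\V$ for the reason you give), since the only role of the second factor is to force any witnesses $u,v$ of a $\gJ$-relation between elements of the form $(\ovl{p_1yp_2},\ast)$ to lie in the copy of $H$ sitting over the idempotent, after which $\ovl{g}h=\ovl{g}$ and $H\cap y^{-1}Hy=\{1\}$ yield $p_2=q_2$ and hence $|H|$ distinct $\gJ$-classes. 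Your use of Corollary~\ref{cor:same_variety} to reduce $R^\flat(G)$ to $L^\flat(G)$ is exactly what Remark~\ref{rm:right-left} points out; the paper instead dualizes.

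The soft spot is the sufficiency direction in the case $N_2^1\in\V$: saying one ``mirrors the inductive scheme of Theorem~\ref{thm:cr}'' conceals precisely the step where the new work lies. The paper first replaces $S$ by the ideal generated by $E(S)$ (local finiteness of nilsemigroups makes this a small extension), invokes Kolibiarov\'a's theorem that $s\mapsto s^\omega$ is a homomorphism to obtain a \emph{finite} semilattice $E(S)$, and inducts on $|E(S)|$ by splitting off $T=\bigcup_{f\ne e}fS$ for a maximal idempotent $e$, recombining the $\gJ$-finiteness of $T$ and of $S/T$ through the ideal-counting Lemma~\ref{lm:ideals}. The residual case is a two-idempotent semigroup $G\cup N$ with $N$ a nilsemigroup ideal, and the critical configuration is an infinite set $Ga$ with $a\in N$. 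Unlike the base case of Theorem~\ref{thm:cr}, where the bottom layer is already a left zero semigroup, here one cannot read off a forbidden object directly: one must first pass to the Rees quotient by the ideal of products containing $a$ at least twice (using the nil property of $N$ to verify that $a$ survives this quotient), then collapse each set $gaG$ to a point, and only then does $L^\flat_H(G)$ with $H$ of infinite index emerge, whereupon Proposition~\ref{prop:reduction} produces the forbidden $L^\flat(\ovl{G})$. Your proposal names the right target but gives no mechanism for taming the nilsemigroup part into a zero semigroup; that quotienting argument is the genuinely new content of this theorem relative to Theorem~\ref{thm:cr} and needs to be supplied.
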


\begin{proof}
\emph{Necessity}. Local finiteness of nilsemigroups in $\V$ follows from Lemma~\ref{lm:nil}. Suppose that for some Novikov-Adian group $G$, the semigroup $L^\flat(G)$ lies in $\V$. We denote the elements of $L^\flat(G)$ according the convention of the proof of Lemma~\ref{lm:right-left}. We need also the semigroup $L^\flat_G(G)$. By the construction, it is the union of $G$ with the singletons $\{G\}$ (which is the set of the (left) cosets of $G$ in $G$) and $\{0\}$. We write $c$ instead of $\{G\}$ to improve readability. Then $L^\flat_G(G)=G\cup\{c,0\}$ and the multiplication in $L^\flat_G(G)$ extends that in $G$ and is such that $gc=cg=c$ for all $g\in G$ while all other products are equal to $0$. Observe that by Lemma~\ref{lm:subgroups} $L^\flat_G(G)$ is a homomorphic image of $L^\flat(G)$, and therefore, $L^\flat_G(G)$ belongs to the variety $\V$.

By Lemma~\ref{lm:nag0} we may (and will) assume that $G$ has an infinite finitely generated subgroup $H$ and an element $y$ such that $H\cap y^{-1}Hy=\{1\}$. Let $h_1,\dots,h_n$ generate $H$. Consider in the direct product $L^\flat(G)\times L^\flat_G(G)$, the subsemigroup $S$ generated by the pairs $(h_i,h_i)$, $i=1,\dots,n$, $(\ovl{1},1)$, and $(y,c)$. Clearly, $S$ lies in $\V$.
It is easy to calculate that
\[
S=\{(h,h),(\ovl{g},h),(gyh,c),(\ovl{gyh},c)\mid g,h\in H\}\cup J,
\]
where $J$ is the ideal of $S$ consisting of all pairs having a zero entry. Since $\ovl{g_1}\,\ovl{g_2}=0$ in $L^\flat(G)$ for all $g_1,g_2\in G$ and $c^2=0$ in $L^\flat_G(G)$, we conclude that $x(\ovl{gyh},c)\in J$ and $(\ovl{gyh},c)x\in J$ for every $x\in S\setminus\{(h,h)\mid h\in H\}$. Therefore, if for some $p_1,p_2,q_1,q_2\in H$, the pairs $(\ovl{p_1yp_2},c)$ and $(\ovl{q_1yq_2},c)$ are $\gJ$-related in $S$, there must exist some $g_1,g_2\in H$ such that
\[
(g_1,g_1)(\ovl{p_1yp_2},c)(g_2,g_2)=(\ovl{q_1yq_2},c).
\]
This implies that $g_1\ovl{p_1yp_2}g_2=\ovl{q_1yq_2}$ in $L^\flat(G)$. By the multiplication rules in $L^\flat(G)$, the left hand side of the latter equality is just $\ovl{g_1p_1yp_2}$. Thus, $\ovl{g_1p_1yp_2}=\ovl{q_1yq_2}$, whence $g_1p_1yp_2=q_1yq_2$ in $G$. This can be rewritten as $q_2p_2^{-1}=y^{-1}q_1^{-1}g_1p_1y$. By the choice of the group $G$, we have $H\cap y^{-1}Hy=\{1\}$, and therefore, $q_2p_2^{-1}=1$, that is, $p_2=q_2$. This means that $(\ovl{p_1yp_2},c)$ and $(\ovl{q_1yq_2},c)$ cannot be $\gJ$-related in $S$ whenever $p_2\ne q_2$. We see that $S$ has at least $|H|$ different $\gJ$-classes, and since the subgroup $H$ is infinite, the finitely generated semigroup $S\in\V$ fails to be $\gJ$-finite, a contradiction.

A dual argument shows that semigroups of the form $R^\flat(G)$ with $G$ being a Novikov--Adian group cannot lie in $\V$.

\smallskip

\emph{Sufficiency}. Take an arbitrary finitely semigroup $S\in\V$. Denote by $I$ the ideal of $S$ generated by the set $E(S)$ of all idempotents of $S$. Since each element in $S$ has an idempotent power, the Rees quotient $S/I$ is a nilsemigroup. Clearly, $S/I$ is finitely generated, and since all nilsemigroups in $\V$ are locally finite, $S/I$ is finite. By Lemma~\ref{lm:Jura} $I$ is a finitely generated semigroup, and by Lemma~\ref{lm:small extension} $S$ is $\gJ$-finite whenever $I$ is. Thus, it suffices to show that $I$ is $\gJ$-finite; in other words, we may assume that $S=I$, that is, $S$ is generated by the set $E(S)$ as an ideal. Since idempotents of $S$ lie in its center, we then have $S=\bigcup_{e\in E(S)}eS$. Observe that the map $x\mapsto ex$ is a homomorphism of $S$ onto $eS$ whence each subsemigroup $eS$ is finitely generated.

Clearly, the set $E(S)$ forms a semilattice. As shown by Kolibiarov\'a~\cite[Pozn\'am\-ka~1]{Kol59}, the map that assigns to each element $s$ of a given periodic semigroup with central idempotents its idempotent power $s^\omega$ is a homomorphism. Hence the semilattice $E(S)$ is a homomorphic image of $S$. Since $S$ is finitely generated, so is $E(S)$, and, clearly, every  finitely generated semilattice is finite. Thus, the set $E(S)$ is finite. We will show that $S=\bigcup_{e\in E(S)}eS$ is $\gJ$-finite by induction on $|E(S)|$.

If $|E(S)|=1$, then it is easy to see that $S$ is a group, and hence, $S$ has only one $\gJ$-class.

Suppose that $|E(S)|>1$, and let $e$ be a maximal element of the semilattice $E(S)$. Consider $T:=\bigcup_{f\in E(S)\setminus\{e\}}fS$. Clearly, $T$ is an ideal of $S$, and the Rees quotient $S/T$ has only two idempotents: $e$, which is the identity element of $S/T$, and $\{T\}$, which is the zero of $S/T$. On the other hand, $T$ is a finitely generated semigroup (since it is the union of finitely many finitely generated semigroups $fS$); besides that, it is easy to see that $T=\bigcup_{f\in E(S)\setminus\{e\}}fT$ and $E(T)=E(S)\setminus\{e\}$ whence the induction assumption applies to $T$. Hence $T$ is $\gJ$-finite. Suppose for a moment that the semigroup $S/T$ is $\gJ$-finite as well\footnote{Observe that we are \textbf{not} in a position to apply the induction assumption to $S/T$.}. If $J$ is an arbitrary ideal of $S$, we can decompose it as
\begin{equation}
\label{eq:decomposition2}
J=(J\cap(S\setminus T))\cup(J\cap T).
\end{equation}
Clearly, $J\cap T$ is an ideal of $T$ and $(J\cap(S\setminus T))\cup\{\{T\}\}$ is an ideal of $S/T$. By Lemma~\ref{lm:ideals}, if $T$ and $S/T$ are $\gJ$-finite, each of these semigroups has finitely many ideals whence there are only finitely many possibilities to choose the parts in the right hand side of the decomposition~\eqref{eq:decomposition2}, whence $S$ has finitely many ideals and so $S$ is $\gJ$-finite by Lemma~\ref{lm:ideals}.

The argument of the preceding paragraph shows that it remains to verify that the semigroup $S/T$ is $\gJ$-finite; observe that $S/T$ is finitely generated. Thus, we may assume that $S=S/T$; in other words, for the rest of the proof we assume that $S$ is a finitely generated periodic semigroup with exactly two idempotents: 1 (identity element) and 0 (zero). Clearly, the semigroup $S$ is the union of the  group $G:=\{s\in S\mid s^\omega=1\}$ with the nilsemigroup $N:=\{s\in S\mid s^\omega=0\}$, the latter being an ideal of $S$. By Lemma~\ref{lm:fg} the group $G$ is finitely generated. If $G$ is finite, then by Lemma~\ref{lm:Jura} $N$ is a finitely generated semigroup, and since all nilsemigroups in $\V$ are locally finite, $N$ is finite. Thus, in this case $S=G\cup N$ is finite, and hence, $\gJ$-finite. Therefore we may assume that $G$ is infinite, that is, $G$ is a Novikov-Adian group.

Let $X$ be a finite generating set of $S$. Each element $s\in N$ can be represented as an alternating product
\[
s=g_0a_1g_1a_2\cdots g_{n-1}a_ng_n,
\]
for certain $g_0,g_1,\dots,g_n\in G$ and $a_1,\dots,a_n\in X\cap N$. Therefore, $N$ is generated as a subsemigroup by the union of all sets of the form $Ga$ and $aG$ where $a$ runs over $X\cap N$. If each of these sets is finite, $N$ is a finitely generated semigroup, and since all nilsemigroups in $\V$ are locally finite, $N$ is finite. Clearly, the group $G$ forms a $\gJ$-class of $S$ while all other $\gJ$-classes are contained in $N$. Hence, finiteness of $N$ implies $\gJ$-finiteness of $S$.

Now we analyze the remaining option: there exists an element $a\in X\cap N$ such that at least one on the sets $Ga$ and $aG$ is infinite. We aim to show that in this situation $\V$ would contain one of the semigroups $L^\flat(\ovl{G})$ or $R^\flat(\ovl{G})$ where $\ovl{G}$ is a Novikov-Adian group. This would contradict the condition of our theorem, and therefore, the option cannot occur.

Consider the case when $Ga$ is infinite; the case when $aG$ is infinite follows by duality. Our tactics is similar to the one used in the proof of Theorem~\ref{thm:cr}: we gradually ``improve'' our semigroup by passing to its suitable divisors until we reach a semigroup isomorphic to a semigroup of the form $L^\flat_H(G)$ where $H$ is an infinite index subgroup of $G$. The desired conclusion then follows from Proposition~\ref{prop:reduction}.

Let $S_1$ be the subsemigroup of $S$ generated by $G$ and $a$. Clearly, $S_1=G\cup N_1$, where $N_1\subseteq N$ consists of alternating products of the form
\begin{equation}
\label{eq:sandwich}
s=g_0ag_1a\cdots g_{n-1}ag_n,
\end{equation}
where $g_0,g_1,\dots,g_n\in G$. Denote by $J_2$ the set of all elements that are representable as products of the form \eqref{eq:sandwich} with $n\ge 2$. Obviously, $J_2$ is an ideal of $S_1$. Suppose that $a\in J_2$, that is, $a=xayaz$ for some $x,y,z\in S_1$. Substituting the expression in the right hand side of this equality for the first occurrence of $a$ in this expression, we obtain $a=x(xayaz)yaz=x^2a(yaz)^2$. Repeating the trick, we get
\[
a=x(xayaz)yaz=x^2a(yaz)^2=\dots=x^ka(yaz)^k
\]
for every $k$, whence $a=0$ as $(yaz)^k=0$ for $k$ large enough (recall that $N$ is a nilsemigroup). This contradicts the assumption that the set $Ga$ is infinite. Thus, $a\notin J_2$, and we can pass to the Rees quotient $S_2:=S_1/J_2=G\cup N_2$, where $N_2:=N_1/J_2$. Since $a\ne 0$ in $S_2$ while every product in which $a$ occurs twice equals 0 in $S_2$, we conclude that $N_2=GaG\cup\{0\}$ is an infinite zero semigroup.

Now consider the equivalence $\rho$ on the semigroup $S_2$ such that $\rho$-classes are the singletons $\{0\}$, $\{g\}$ for $g\in G$, and the sets of the form $gaG$ where $g\in G$. It is routine to verify that $\rho$ is a congruence on $S_2$. We denote the image of $a$ in the quotient semigroup $S_3:=S_2/\rho$ by $\ovl{a}$ and identify the singleton $\rho$-classes with their elements. Then $S_3=G\cup N_3$, where $N_3=G\ovl{a}\cup\{0\}$ is an infinite zero semigroup and $\ovl{a}g=\ovl{a}$ for every $g\in G$. Let $H:=\{h\in G\mid h\ovl{a}=\ovl{a}\}$. Clearly, $H$ is a subgroup of $G$ and, for $g_1,g_2\in G$, the equality $g_1\ovl{a}=g_2\ovl{a}$ is equivalent to the inclusion $g_2^{-1}g_1\in H$ which in turn is equivalent to the equality $g_1H=g_2H$ between left cosets of $H$ in $G$. Hence the elements of $G\ovl{a}$ are in a 1-1 correspondence with the left cosets of $H$ in $G$ which implies that $H$ has infinite index in $G$. Combining the 1-1 correspondence with the identical map of $G\cup\{0\}$ onto itself, we obtain an isomorphism between $S_3$ and the semigroup $L^\flat_H(G)$. Thus, $L^\flat_H(G)\in\V$, and this completes the proof.
\end{proof}

\begin{Remark}
\label{rm:right-left}
Corollary~\ref{cor:same_variety} shows that one could have halved the set of ``forbidden objects'' in Theorem~\ref{thm:central} by restricting it to the semigroups $L^\flat(G)$, where $G$ is a Novikov--Adian group. We also observe that even though $L^\flat(G)$ and $R^\flat(G)$ serve as ``forbidden objects'' for locally $\gJ$-finite varieties, these semigroups themselves are $\gJ$-finite (actually, each of these semigroups has only three $\gJ$-classes). That is why the proof of necessity becomes non-trivial in this case.  
\end{Remark}

\subsection{Characterization.} Summing up the preparatory results obtained so far, we arrive at our ultimate characterization of locally $\gJ$-finite semigroup varieties of finite axiomatic rank.

\begin{Theorem}
\label{thm:final_jlf}
A semigroup variety $\V$ of finite axiomatic rank is locally $\gJ$-finite if and only if $\V$ is either locally finite or satisfies one of the following conditions:

\emph{(o)} $\V$ consists of locally finite extensions of periodic completely regular semigroups;

\emph{(r)} every semigroup $S\in\V$ is a locally finite extension of an ideal of the form $SR$, where $R$ is a periodic completely regular right ideal in $S$, and $\V$ contains none of the semigroups $L(G)$, where $G$ is a Novikov--Adian group;

($\ell$) every semigroup $S\in\V$ is a locally finite extension of an ideal of the form $LS$, where $L$ is a periodic completely regular left ideal in $S$, and $\V$ contains none of the semigroups $R(G)$, where $G$ is a Novikov--Adian group;

\emph{(c)} $\V$ consists of periodic semigroups with central idempotents, all nilsemigroups in $\V$ are locally finite, and $\V$ contains none of the semigroups $L^\flat(G)$, $R^\flat(G)$ where $G$ is a Novikov--Adian group.
\end{Theorem}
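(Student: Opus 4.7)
The plan is to derive Theorem~\ref{thm:final_jlf} as a direct assembly of the characterizations already established: Corollary~\ref{cor:locfin} handles the case with no Novikov--Adian group; Corollary~\ref{cor:1of3} forces a case split according to which of $N_2^1$, $N_2^\ell$, $N_2^r$ is present; Proposition~\ref{prop:jlf1}, Corollary~\ref{cor:central}, Theorem~\ref{thm:final_rlf}, and Theorem~\ref{thm:central} each fit one of the resulting cases.

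\smallskip

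\emph{Necessity.} Suppose that $\V$ is locally $\gJ$-finite and of finite axiomatic rank, and assume $\V$ is not locally finite. By Corollary~\ref{cor:locfin}, $\V$ contains a Novikov--Adian group, so Corollary~\ref{cor:1of3} applies and at most one of the semigroups $N_2^1$, $N_2^\ell$, $N_2^r$ lies in $\V$. The four possibilities correspond to the four conditions in the theorem. If none of the three lies in $\V$, the sufficiency-to-necessity direction of Proposition~\ref{prop:jlf1}(o) (the implication ``$\V$ locally $\gJ$-finite $\Rightarrow$ $\V$ consists of locally finite extensions of a completely regular ideal'') yields condition (o). If $N_2^r \in \V$ but $N_2^\ell, N_2^1 \notin \V$, Proposition~\ref{prop:jlf1}(r) upgrades local $\gJ$-finiteness to local $\gR$-finiteness, and Theorem~\ref{thm:final_rlf} (necessity) then provides the structural condition and the exclusion of the semigroups $L(G)$ for Novikov--Adian $G$ that together form condition (r). The case $N_2^\ell \in \V$, $N_2^r,N_2^1 \notin \V$ is dual, giving condition ($\ell$). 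Finally, if $N_2^1 \in \V$ but $N_2^\ell, N_2^r \notin \V$, then Corollary~\ref{cor:central} guarantees that every member of $\V$ has central idempotents, so Theorem~\ref{thm:central} (necessity) delivers the local finiteness of nilsemigroups in $\V$ and the exclusion of the semigroups $L^\flat(G)$ and $R^\flat(G)$, i.e., condition (c).

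\smallskip

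\emph{Sufficiency.} Assume one of the four conditions holds. In case (o), the sufficiency half of Proposition~\ref{prop:jlf1}(o)---whose argument is simply Lemma~\ref{lm:Jura} followed by Corollary~\ref{cor:cr-ljf} and Lemma~\ref{lm:small extension}, and which does not actually require the exclusion hypothesis stated there---gives that $\V$ is locally $\gJ$-finite. In case (r), Theorem~\ref{thm:final_rlf} (sufficiency) yields that $\V$ is locally $\gR$-finite, which by Lemma~\ref{lm:inclusions} is locally $\gJ$-finite; case ($\ell$) is dual. In case (c), Theorem~\ref{thm:central} (sufficiency) directly gives local $\gJ$-finiteness of $\V$.

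\smallskip

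The argument is essentially bookkeeping: all the technical work has been front-loaded into Propositions~\ref{prop:pn_out}--\ref{prop:l(g)_out}, Proposition~\ref{prop:jlf1}, and Theorems~\ref{thm:final_rlf} and~\ref{thm:central}. The only point to be careful about is that the conditions (o), (r), ($\ell$), (c) in the statement are not phrased in terms of the presence or absence of $N_2^1$, $N_2^\ell$, $N_2^r$, so one must check in the sufficiency direction that each structural condition alone---without any explicit hypothesis on these three semigroups---suffices to invoke the corresponding prior result. For (o) this is visible from the proof of Proposition~\ref{prop:jlf1}(o); for (r) and ($\ell$) it is built into Theorem~\ref{thm:final_rlf} and its dual; and for (c) the central-idempotents hypothesis is made explicit, so Theorem~\ref{thm:central} applies verbatim. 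No step in this assembly presents a genuine obstacle.
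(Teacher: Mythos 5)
Your proof is correct and follows essentially the same route as the paper's: necessity by splitting into four cases according to which (if any) of $N_2^1$, $N_2^\ell$, $N_2^r$ lies in $\V$ (via Corollary~\ref{cor:1of3}) and invoking Proposition~\ref{prop:jlf1}, Theorem~\ref{thm:final_rlf} and its dual, Corollary~\ref{cor:central}, and Theorem~\ref{thm:central}; sufficiency by citing the same results case by case. Your extra remark that the sufficiency half of Proposition~\ref{prop:jlf1}(o) does not actually use the exclusion hypothesis is accurate and addresses the only point where the assembly could be questioned.
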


\begin{proof}
\emph{Necessity}. Suppose that $\V$ is locally $\gJ$-finite but not locally finite. By Corollary~\ref{cor:central}, $\V$ then contains at most one of the semigroups $N_2^1$, $N_2^\ell$, and $N_2^r$.

If $\V$ contains none of these semigroups, then $\V$ consists of locally finite extensions of a completely regular ideal by Proposition~\ref{prop:jlf1}(o).

If $\V$ contains the semigroup $N_2^r$, but none of $N_2^\ell$ and $N_2^1$, then Proposition~\ref{prop:jlf1}(r) applies whence $\V$ is locally $\gR$-finite. Using the characterization of the locally $\gR$-finite varieties of finite axiomatic rank provided by Theorem~\ref{thm:final_rlf}, we obtain that every semigroup $S\in\V$ is a locally finite extension of an ideal of the form $SR$, where $R$ is a periodic completely regular right ideal in $S$, and $\V$ contains none of the semigroups $L(G)$, where $G$ is a Novikov--Adian group.

Dually, if $\V$ contains the semigroup $N_2^\ell$, but none of $N_2^r$ and $N_2^1$, then Proposition~\ref{prop:jlf1}($\ell$) and the dual of Theorem~\ref{thm:final_rlf} imply that every semigroup $S\in\V$ is a locally finite extension of an ideal of the form $LS$, where $L$ is a periodic completely regular left ideal in $S$, and $\V$ contains none of the semigroups $R(G)$, where $G$ is a Novikov--Adian group.

Finally, if $\V$ contains the semigroup $N_2^1$, but none of $N_2^r$ and $N_2^\ell$, then $\V$ consists of semigroups with central idempotents, and Theorem~\ref{thm:central} applies.

\smallskip

\emph{Sufficiency}. If $\V$ is locally finite, nothing is to prove. If $\V$ satisfies one of the conditions (o), (r), ($\ell$), its local $\gJ$-finiteness readily follows from Proposition~\ref{prop:jlf1}, Theorem~\ref{thm:final_rlf}, and the dual of the latter. If $\V$ satisfies the condition (c), $\V$ is locally  $\gJ$-finite by Theorem~\ref{thm:central}.
\end{proof}

\section{Concluding remarks}
\label{sec:final}

\subsection{The role of finiteness of axiomatic rank}
\label{subsec:rank}
Our main results (Theorems~\ref{thm:final_hlf}, \ref{thm:final_rlf}, and \ref{thm:final_jlf}) deal with varieties of finite axiomatic rank.  A more careful analysis of the proofs shows that finiteness of axiomatic rank has only been used in the ``only if'' parts of the reduction steps of these results, that is, in the reduction of the problem of characterizing locally $\gH$-finite, locally $\gR$-finite and locally $\gL$-finite varieties to the case of varieties of periodic completely regular semigroups as well as in the reduction of the problem of characterizing locally $\gJ$-finite varieties to the case of varieties of periodic semigroups with central idempotents. Neither our characterization of
locally $\gH$-finite, locally $\gR$-finite and locally $\gL$-finite varieties of periodic completely regular semigroups (Theorem~\ref{thm:cr}) nor our characterization of locally $\gJ$-finite varieties of periodic semigroups with central idempotents (Theorem~\ref{thm:central}) requires varieties under consideration to be of finite axiomatic rank.

It would be tempting to get rid of the restriction to axiomatic rank also in the ``only if'' parts of the reduction steps but this task does not seem feasible with the present proof techniques. Indeed, these parts depend on Sapir's characterization of locally finite varieties of finite axiomatic rank (Proposition~\ref{prop:sapir}), for which proof
finiteness of axiomatic rank is crucially essential. It is an open problem whether or not Sapir's characterization extends to arbitrary varieties of periodic semigroups; if it does, so do also the main results of the present paper.

We conclude this subsection with an example demonstrating that the above discussion is non-void, so to speak; that is, we show that locally $\K$-finite varieties of infinite axiomatic rank which are not locally finite do exist for every $\K\in\{\gJ,\gD,\gL,\gR,\gH\}$. Recall that the \emph{product} of two group varieties is defined as the class of all groups $G$ possessing a normal subgroup $N$ from the first factor such that the quotient group $G/N$ lies in the second factor. The product of group varieties $\V$ and $\W$ is denoted by $\mathbf{VW}$ and is known to be a variety, see, e.g., \cite[Chapter~2]{Neu67}.

\begin{Example}
\label{ex:bpbp}
Let $p$ be a sufficiently large prime number, say, $p>10^{10}$. It is shown in \cite[Theorem 1.2]{BoOl15} that the square $\B_p\B_p$ of the Burnside variety $\B_p$ has infinite axiomatic rank and is not locally finite. On the other hand, since $\B_p\B_p$ consists of groups, this variety is locally $\K$-finite for every $\K\in\{\gJ,\gD,\gL,\gR,\gH\}$. If one wants to have a ``proper'' semigroup variety (that is, not consisting entirely of groups) with the same properties, one can consider the join of the variety $\B_p\B_p$ with the variety of zero semigroups. It can be easily verified that this join also is a locally $\K$-finite but not locally finite variety of infinite axiomatic rank.
\end{Example}

\subsection{Pure ``forbidden objects'' characterizations}
\label{subsec:alternative}
All of our main results have been stated in a mixed language: we have listed some ``forbidden objects'' but also added a few structural restrictions. In fact, it is possible to express our results in terms of ``forbidden objects'' only. For this, we need, first of all, to exhibit ``forbidden objects'' for the property which is --- explicitly or implicitly --- present in a majority of the characterizations above, namely, the property that the nilsemigroups in a given variety are locally finite. We derive such objects from  a core combinatorial construction used by Sapir~\cite{Sap87}, see also \cite[Sections 2.5 and 3.3]{Sap14}.

Fix a positive integer $k$ and let $r=6k+2$. Consider the $r^2\times r$-matrix $M$ shown in the left hand part of Figure~\ref{fig:matrices}.
\begin{figure}[ht]
\[
M:=\begin{pmatrix}
  1 & 1 & \cdots & 1 & 1\\
  \vdots & \vdots & \ddots & \vdots & \vdots \\
  1 & r & \cdots & 1 & r\\
  2 & 1 & \cdots & 2 & 1\\
  \vdots & \vdots & \ddots & \vdots & \vdots\\
  2 & r & \cdots & 2 & r\\
  \vdots & \vdots & \ddots & \vdots & \vdots\\
  r & 1 & \cdots & r & 1\\
  \vdots & \vdots & \ddots & \vdots & \vdots\\
  r & r & \cdots & r & r
\end{pmatrix} \qquad
 M_{A}:=\begin{pmatrix}
  a_{11} & a_{12} & \cdots & a_{1r-1} & a_{1 r} \\
  \vdots & \vdots & \ddots & \vdots & \vdots \\
  a_{11} & a_{r2} & \cdots & a_{1r-1} & a_{rr}\\
  a_{21} & a_{12} & \cdots & a_{2r-1} & a_{1r}\\
  \vdots & \vdots & \ddots & \vdots & \vdots \\
  a_{21} & a_{r2} & \cdots & a_{2r-1} & a_{rr}\\
  \vdots & \vdots & \ddots & \vdots & \vdots \\
  a_{r1} & a_{12} & \cdots & a_{rr-1} & a_{1r}\\
  \vdots & \vdots & \ddots & \vdots & \vdots \\
  a_{r1} & a_{r2} & \cdots & a_{rr-1} & a_{rr}
\end{pmatrix}
\]
\caption{The matrices $M$ and $M_A$}\label{fig:matrices}
\end{figure}
All odd columns of $M$ are identical and equal to the transpose of the row $(1,\dots,1,2,\dots,2,\dots,r,\dots,r)$ where each number occurs $r$ times. All even columns of $M$ are identical and equal to the transpose of the row $(1,2,\dots,r,\dots,1,2,\dots,r)$ in which the block $1,2,\dots,r$ occurs $r$ times.

Now consider the alphabet $A=\{a_{ij}\mid 1\leq i,j\leq r\}$ of cardinality $r^2$. We convert the matrix $M$ to the matrix $M_{A}$ (shown in the right hand part of Figure~\ref{fig:matrices}) by replacing numbers with letters according to the following rule: whenever the number $i$ occurs in the column $j$ of $M$, we substitute it with
the letter $a_{ij}$.

Let $v_{t}$ be the word in the $t^{\mathrm{th}}$ row of the matrix $M_A$. Consider the endomorphism $\gamma$ of the free semigroup $A^+$ over the alphabet $A$ defined by
\[
\gamma(a_{ij}):=v_{(i-1)r+j}.
\]
Let $V_k$ be the set of all factors of the words in the sequence $\{\gamma^m(a_{11})\}_{m=1,2,\dots}$ and let $0$ be a fresh symbol beyond $V_k$. We define a multiplication $\cdot$ on the set $V_k\cup\{0\}$ as follows:
\[
u\cdot v:=\begin{cases}
uv& \text{if}\ uv\in V_k,\\
0 & \text{otherwise}.
\end{cases}
\]
Clearly, the set $V_k^0:=V_k\cup\{0\}$ becomes a semigroup under this multiplication. This semigroup has been introduced in~\cite{ADV12} for a different purpose.

Let $\V$ be a variety of finite axiomatic rank. We say that the rank of $\V$ is equal to $k$ and write $\rk\V=k$ if $k$ is the least number such that $\V$ can be given by a set of identities involving at most $k$ letters. The following is a reformulation of Sapir's characterization of locally finite varieties of finite axiomatic rank consisting of nilsemigroups (that we mentioned in the introduction) in the language of ``forbidden objects''.
\begin{Proposition}
\label{prop:avoidable}
Let $\V$ be a semigroup variety with $\rk\V=k$. All nilsemigroups in $\V$ are locally finite if and only if $\V$ excludes the semigroup $V_k$.
\end{Proposition}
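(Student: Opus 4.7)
The plan is to derive Proposition~\ref{prop:avoidable} from Sapir's original algebraic characterization of locally finite nil-varieties of finite axiomatic rank, treating the latter as a black box, and to reformulate its content as a statement about the single forbidden object $V_k$.

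I would begin by recording three properties of the semigroup $V_k$ that follow directly from its definition and from the combinatorial design of the matrix $M$. First, $V_k$ is finitely generated: each of its nonzero elements is by construction a factor of some $\gamma^m(a_{11})$, hence a product of letters from the finite alphabet $A=\{a_{ij}\mid 1\le i,j\le r\}$, so the $r^2=(6k+2)^2$ generators $a_{ij}$ together with $0$ suffice. Second, $V_k$ is infinite, since $|\gamma^m(a_{11})|$ grows without bound with $m$ and therefore $V_k$ contains factors of arbitrary length. Third, $V_k$ is a nilsemigroup: the rows of $M$ are chosen so that no high power of any nonempty word occurs as a factor of any $\gamma^m(a_{11})$ (this is the core "avoidance" property that motivated Sapir's construction), hence for every $u\in V_k$ some power $u^N$ is not a factor of any $\gamma^m(a_{11})$ and therefore equals $0$ in $V_k$.

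With these in hand, the "only if" direction is immediate: if $V_k\in\V$, then $\V$ contains an infinite finitely generated nilsemigroup, so not all nilsemigroups in $\V$ are locally finite. For the "if" direction I would suppose $V_k\notin\V$ and invoke Sapir's criterion in its original form: a variety of axiomatic rank at most $k$ has all its nilsemigroups locally finite if and only if it satisfies some identity in at most $k$ letters of a specific "non-trivial" combinatorial shape (described through substitutions into Zimin-like test words). Since $V_k\notin\V$, there is an identity $u=v$ holding in $\V$ but failing in $V_k$, and because $\rk\V=k$ such an identity may be chosen on at most $k$ letters. The key task is then to show that any identity on at most $k$ letters that fails in $V_k$ must be "non-trivial" in Sapir's sense; Sapir's theorem then closes the argument.

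The main obstacle is precisely this last matching step, namely verifying that the identities satisfied by $V_k$ on at most $k$ letters are exactly Sapir's "trivial" identities. This is where the specific shape of the matrix $M$ plays its role: the alternating pattern of its odd and even columns, together with the $r^2\times r$ format with $r=6k+2$, is designed so that the iterates $\gamma^m(a_{11})$ realize, as factors, all Zimin-style substitution instances on at most $k$ letters up to a depth that grows with $m$. Consequently, any identity in at most $k$ letters that evaluates equally on all factors of all $\gamma^m(a_{11})$ must already be a "trivial" identity of Sapir's classification, and conversely any "non-trivial" one is witnessed to fail inside $V_k$. This is the combinatorial content of \cite[Sections 2.5 and 3.3]{Sap14}, and once it is quoted or reproved the equivalence $V_k\in\V\iff\V\text{ has a non-locally-finite nilsemigroup}$ follows.
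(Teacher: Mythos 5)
Your proposal is correct and follows essentially the same route as the paper: the necessity half is the same observation that $V_k^0$ is an infinite finitely generated nilsemigroup (the paper uses the sharper fact from \cite[Theorem~2.5.19]{Sap14} that no square $uu$ occurs in the words $\gamma^m(a_{11})$, so that $u\cdot u=0$), and the sufficiency half is the same reduction to Sapir's criterion. The ``key matching step'' you isolate is exactly what the paper imports as the contrapositive of \cite[Lemma~3.3.34]{Sap14}, producing a nontrivial identity $Z_{k+1}=z$ in $\V$, after which \cite[Theorem~3.3.4]{Sap14} closes the argument.
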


\begin{proof}
\emph{Necessity}.  By the construction, the semigroup $V_k^0$ is infinite and is generated by the finite set $A$. It follows from \cite[Theorem 2.5.19]{Sap14} that no word in the sequence $\{\gamma^m(a_{11})\}_{m=1,2,\dots}$ contains a factor of the form $uu$, where $u\in A^+$. This implies that $u\cdot u=0$ in $V_k^0$ for every $u\in V_k$ whence $V_k$ is a nilsemigroup. Thus, $V_k^0$ cannot belong to any variety whose finitely generated nilsemigroups are finite.

\smallskip

\emph{Sufficiency}. Recall that the sequence $\{Z_n\}_{n=1,2,\dots}$ of \emph{Zimin words} is defined inductively by $Z_1:=x_1$, $Z_{n+1}:=Z_nx_{n+1}Z_n$. The contrapositive of \cite[Lemma~3.3.34]{Sap14} implies that if $\rk\V=k$ and $\V$ excludes the semigroup $V_k$, then $\V$ satisfies an identity of the form $Z_{k+1}=z$, where $z$ is a word different from $Z_{k+1}$. By \cite[Theorem~3.3.4]{Sap14}, this ensures that all  nilsemigroups in $\V$ are locally finite.
\end{proof}

Now we are ready to characterize locally $\K$-finite semigroup varieties of finite axiomatic rank exclusively in terms of ``forbidden objects''. In Table~\ref{tab:fo} we collect such characterizations for an arbitrary variety $\V$ with $\rk\V=k$. The acronym NAG stands for ``Novikov--Adian group''.

\begin{table}[h]
\caption{Characterizations of locally $\K$-finite varieties of rank $k$}\label{tab:fo}
\begin{tabular}{|p{3cm}|p{8cm}|}
  \hline
  \textbf{Property}\rule[-6pt]{0pt}{18pt} & \textbf{``Forbidden objects''} \\
  \hline
  Local $\gH$-finiteness\rule[-6pt]{0pt}{18pt} & $V_k^0$, $N_2^1$, $N_2^\ell$, $N_2^r$, $L(G)$, $R(G)$, where $G$ is a NAG \\
  \hline
  Local $\gR$-finiteness\rule[-6pt]{0pt}{18pt} & $V_k^0$, $N_2^1$, $N_2^\ell$, $L(G)$, where $G$ is a NAG \\
  \hline
  Local $\gL$-finiteness\rule[-6pt]{0pt}{18pt} & $V_k^0$, $N_2^1$, $N_2^r$, $R(G)$, where $G$ is a NAG \\
  \hline
  Local $\gJ$-finiteness\rule[-6pt]{0pt}{18pt} & Either $V_k^0$, $N_2^1$, $N_2^\ell$, $N_2^r$,\\
  \rule[-6pt]{0pt}{18pt}& or $V_k^0$, $N_2^1$, $N_2^\ell$, $L(G)$, where $G$ is a NAG,\\
  \rule[-6pt]{0pt}{18pt}& or $V_k^0$, $N_2^1$, $N_2^r$, $R(G)$, where $G$ is a NAG,\\
  \rule[-6pt]{0pt}{18pt}& or $V_k^0$, $\mathbb{N}$, $N_2^\ell$, $N_2^r$, $L^\flat(G)$, $R^\flat(G)$, where $G$ is a NAG\\
  \hline
\end{tabular}
\end{table}

The results gathered in Table~\ref{tab:fo} readily follow from the proofs of the main theorems of the present paper (Theorems~\ref{thm:final_hlf}, \ref{thm:final_rlf}, and \ref{thm:final_jlf}), combined with Proposition~\ref{prop:avoidable} and lemmas in Subsection~\ref{subsec:3-element} that clarify the role of the 3-element semigroups $N_2^1$, $N_2^\ell$,  and $N_2^r$. Therefore we include no proofs of these results here. Perhaps, a word of comment is needed to explain the sudden appearance of $\mathbb{N}$, the additive semigroup of positive integers, in the last line of Table~\ref{tab:fo}: one has to exclude $\mathbb{N}$ in order to guarantee that the variety under consideration is periodic. (In all other cases, periodicity follows from the condition that the semigroup $N_2^1$ is excluded and from the obvious fact that $N_2^1$ lies in the variety $\var\mathbb{N}$.)

What can be said about the size of the lists of ``forbidden objects'' in Table~\ref{tab:fo}? It is known \cite{Koz12} that the variety $\B_p$ where $p$ is a sufficiently large prime number contains uncountably many subvarieties each of which is generated by a Novikov--Adian group. Combining this result with Lemma~\ref{lm:difference} and its dual, we see that the lists of ``forbidden objects'' in Table~\ref{tab:fo} are uncountable except for the first list in the cell corresponding to locally $\gJ$-finite varieties.

\subsection{Local $\K$-finiteness vs. $\K$-compatibility}
\label{subsec:compatibility}
The reader acquainted with the cycle of papers \cite{PaVo05,PaVo06a,PaVo06b} devoted to the study of varieties consisting of $\K$-\emph{compatible} semigroups, i.e., semigroups in which Green's relations are congruences, may have observed that the present paper has borrowed several ideas and technical tools from this cycle. We did not emphasize the relation of our results to those in \cite{PaVo05,PaVo06a,PaVo06b} wherever it would be in place as we did not want to deviate from the main topic of the present paper and aimed to make our proofs as self-contained as possible. Here we only want to register a corollary that looks a bit surprising but in fact readily follows from a comparison between the main results of the present paper and the characterizations of $\K$-compatible varieties found in~\cite{PaVo05,PaVo06a,PaVo06b}.

\begin{Corollary}
\label{cor:compatibility}
Let $\K$ be any of the Green relations $\gJ,\gD,\gL,\gR,\gH$. If all nilsemigroups in a periodic semigroup variety $\V$ are locally finite and $\K$ is a congruence on every semigroup in $\V$, then the variety $\V$ is locally $\K$-finite.\qed
\end{Corollary}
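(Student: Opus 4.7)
The plan is to derive the corollary by combining the characterizations of $\K$-compatible varieties from~\cite{PaVo05,PaVo06a,PaVo06b} with the sufficiency directions of the main theorems of the present paper (Theorems~\ref{thm:final_hlf},~\ref{thm:final_rlf},~\ref{thm:final_jlf}). By Corollary~\ref{cor:dj}, it suffices to treat $\K \in \{\gH, \gR, \gL, \gJ\}$.

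For $\K \in \{\gH, \gR, \gL\}$, the first step is to verify that $L(G)$ and $R(G)$ themselves fail to be $\K$-compatible for any non-trivial $G$. Indeed, in $L(G)$ any two elements $g_1, g_2 \in G$ are $\gR$-related (since $G$ is the group of units), but the right products $g_1 \cdot hE = g_1 h E$ and $g_2 \cdot hE = g_2 h E$ lie in distinct singleton $\gR$-classes whenever $g_1 \neq g_2$, so $\gR$ (hence $\gH$) is not a congruence on $L(G)$; dually for $R(G)$ and $\gL$. Thus $\K$-compatibility of $\V$ automatically excludes the ``group-theoretic'' forbidden objects from Table~\ref{tab:fo}. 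Next I would invoke the characterizations of $\K$-compatibility from~\cite{PaVo05,PaVo06a} to infer that the relevant subset of $N_2^1, N_2^\ell, N_2^r$ is excluded as divisors of any $S \in \V$, so that by Lemma~\ref{lm:grSideal} and Corollary~\ref{cor:grSideal} the set $\Gr S$ forms an ideal (or one-sided ideal) of $S$; the nilsemigroup hypothesis together with the argument of Lemma~\ref{lm:extension} then shows that every $S \in \V$ is a locally finite extension of a periodic completely regular ideal of the appropriate form. Applying Theorem~\ref{thm:cr} to $\CR(\V)$ and invoking the sufficiency direction of Proposition~\ref{thm:hlf} or~\ref{thm:rlf} (whose proofs, as a close reading reveals, do not actually use the assumption that $\V$ contains a Novikov--Adian group) then yields local $\K$-finiteness.

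The case $\K = \gJ$ is more delicate, because $\gJ$ is already a congruence on each of $L^\flat(G)$ and $R^\flat(G)$, and so $\gJ$-compatibility alone does not exclude these semigroups. Here I would use the $\gJ$-compatibility characterization from~\cite{PaVo06b}, splitting into the sub-cases (o), (r), ($\ell$), (c) of Theorem~\ref{thm:final_jlf} according to which (if any) of $N_2^1, N_2^\ell, N_2^r$ belong to $\V$. The sub-cases (o), (r), ($\ell$) reduce to Proposition~\ref{prop:jlf1} by essentially the mechanism used in the paragraph above. For the central-idempotents sub-case (c), I would invoke Theorem~\ref{thm:central} and use the characterization from~\cite{PaVo06b} to rule out the subdirect-product and quotient constructions through which $L^\flat(G)$ or $R^\flat(G)$ (for Novikov--Adian $G$) could otherwise enter a $\gJ$-compatible variety with central idempotents.

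The main obstacle is precisely this sub-case (c): extracting from the $\gJ$-compatibility characterization of~\cite{PaVo06b} that $L^\flat(G)$ and $R^\flat(G)$ cannot sit inside a $\gJ$-compatible variety with central idempotents and locally finite nilsemigroups, even though these semigroups are themselves $\gJ$-compatible. A minor secondary concern is that the corollary does not assume finite axiomatic rank; however, as noted in Subsection~\ref{subsec:rank}, this assumption enters only the ``only if'' directions of the main theorems, while the present argument invokes only their sufficiency directions, so no such assumption is needed.
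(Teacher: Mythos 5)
Your overall strategy --- feeding the $\K$-compatibility characterizations of \cite{PaVo05,PaVo06a,PaVo06b} into the sufficiency halves of the reduction propositions and of Theorems~\ref{thm:cr} and~\ref{thm:central} --- is exactly what the paper intends (it offers no proof beyond this pointer), and several of your steps are sound: the computation showing that $\gR$, and hence $\gH$, fails to be a congruence on $L(G)$ is correct, as is your observation that the sufficiency directions of Propositions~\ref{thm:hlf} and~\ref{thm:rlf} use neither finite axiomatic rank nor the presence of a Novikov--Adian group.

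The genuine gap is the sentence in which you ``infer that the relevant subset of $N_2^1$, $N_2^\ell$, $N_2^r$ is excluded as divisors of any $S\in\V$''. This cannot be a consequence of $\K$-compatibility alone: on each of these three semigroups every Green relation is the identity relation, so each is $\K$-compatible for every $\K$, and $N_2^1$, being commutative, even generates a variety that is $\K$-compatible for every $\K$ (in a commutative semigroup all Green's relations coincide and are congruences). Hence the route ``$N_2^1,N_2^\ell\notin\V$, therefore $\Gr S$ is a (one-sided) ideal'' is blocked by exactly the same phenomenon that you flag as ``the main obstacle'' in the central-idempotents case; the obstacle is not specific to $\K=\gJ$. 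What is actually available, and what you would need to prove, is a conditional exclusion: a $\K$-compatible variety cannot contain one of these semigroups \emph{together with} a Novikov--Adian group. For that you can recycle the paper's own constructions instead of the external characterizations: in the semigroup $S\subseteq N_2^1\times G$ of Proposition~\ref{prop:c_out} one has $(e,h_1)\mathrel{\gH}(e,h_2)$ for all $h_1,h_2\in H$, while $(e,h_1)(a,y)=(a,h_1y)$ and $(e,h_2)(a,y)=(a,h_2y)$ are not $\gR$-related for $h_1\ne h_2$, so $S$ witnesses failure of $\gR$- and $\gH$-compatibility and not merely $\gR$-infiniteness; the analogous remark applies to $N_2^\ell\times G$ in Proposition~\ref{prop:p_out}. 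Even after this repair a residual case remains open in your sketch: since the corollary imposes no bound on axiomatic rank, Proposition~\ref{prop:sapir} is unavailable, so you cannot conclude that a non-locally-finite $\V$ contains a Novikov--Adian group, and the conditional exclusions then say nothing about a hypothetical $\K$-compatible, periodic, non-locally-finite variety with locally finite nilsemigroups and no Novikov--Adian groups. Closing that case genuinely requires the structural content of the characterizations in \cite{PaVo05,PaVo06a,PaVo06b}, which your proposal invokes only by name.
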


It should be noticed that, in general, a variety consisting of $\K$-compatible semigroups need not be periodic nor need its nilsemigroups be locally finite. The converse of Corollary~\ref{cor:compatibility} also does not hold true in general.

\subsection{Further developments and open questions}
\label{subsec:questions}
We plan to extend the studies initiated in this paper to several important species of semigroups that are naturally equipped with a unary operation: completely regular semigroups, inverse semigroups, epigroups. In each of these cases, the varietal approach is well established (see respectively the monographs \cite{PR99} and \cite{Pet84}, and the survey \cite{She05}) and Green's relations are of immense importance so that looking for characterizations of locally $\K$-finite varieties of completely regular semigroups, inverse semigroups, and epigroups appears to be worthwhile. A more general problem is to characterize locally $\K$-finite e-varieties of regular semigroups. (Recall that a class of regular semigroups is said to be an \emph{existence variety} (or \emph{e-variety}) if it is closed under taking direct products, regular subsemigroups and homomorphic images; this notion was introduced by Hall \cite{hall1} and, independently, by Kad\'ourek and Szendrei \cite{kad-szen} for the class of orthodox semigroups.) The interconnections between local $\K$-finiteness and $\K$-compatibility mentioned in Subsection~\ref{subsec:compatibility} give some hope that a characterization might be possible even in such an extremely general setting: we mean here that there exists rather a transparent characterization of $\gH$-compatible e-varieties of regular semigroups in the language of ``forbidden objects'', see \cite{PaVo96}.

Finally, a challenging open problem is to characterize varieties whose finitely generated semigroups have only finitely many \textbf{regular} $\K$-classes. This condition imposes no restriction to nilsemigroups, nor it implies periodicity. For $\K=\gH$, it amounts to saying that each finitely generated semigroup in the variety under consideration has only finitely many idempotents which is a fairly natural finiteness condition, certainly deserving being explored. It is interesting to notice that this condition has recently been studied for varieties of associative rings, see \cite{Fin15}.


\begin{thebibliography}{12}
\bibitem{Adi79}
\textsc{S.~I. Adian}, ``The {Burnside} Problem and Identities in Groups'', Springer Verlag, 1979.

\bibitem{Asc04}
\textsc{M. Aschbacher}, \emph{The status of the Classification of the Finite Simple Groups}, Notices Amer.\ Math.\ Soc. \textbf{51} (2004), 736--740.

\bibitem{ADV12}
\textsc{K. Auinger}, \textsc{I. Dolinka}, \textsc{M. V. Volkov}, \emph{Matrix identities involving multiplication and transposition}, J. European Math. Soc. \textbf{14} (2012), 937--969.

\bibitem{BoOl15}
\textsc{N. S. Boatman}, \textsc{A. Yu. Olshanskii}, \emph{On identities in the products of group varieties},  Internat. J. Algebra Comput. \textbf{25} (2015), 531--540.

\bibitem{BuSa81}
\textsc{S. Burris}, \textsc{H. P. Sankappanavar}, ``A Course in Universal Algebra'', Springer Verlag, 1981.

\bibitem{CaMa13}
\textsc{A. J. Cain}, \textsc{V. Maltcev}, \emph{For a few elements more: A survey of finite Rees index}, Preprint, 2013 \url{https://arxiv.org/abs/1307.8259}	

\bibitem{ClPr61}
\textsc{A. H. Clifford}, \textsc{G. B. Preston} ``The Algebraic Theory of Semigroups'', Vol.I, 2nd ed., Amer.\ Math.\ Soc., 1964.

\bibitem{Gre51}
\textsc{J. A. Green}, \emph{On the structure of semigroups}, Ann. Math. (2) \textbf{54} (1951) 163--172.

\bibitem{Fin15}
\textsc{O. B. Finogenova}, \emph{Characterization of locally Noetherian varieties in terms of idempotents}, Math. Notes \textbf{97} (2015), 937-–940.

\bibitem{HaHi56}
\textsc{P. Hall}, \textsc{G. Higman}, \emph{On the $p$-length of $p$-soluble groups and reduction theorems for Burnside's problem}. Proc. London Math. Soc. (3) \textbf{6} (1956), 1--42.

\bibitem{hall1}
\textsc{T. E. Hall}, \emph{Identities for existence varieties of regular semigroups}, Bull. Austral. Math. Soc. \textbf{40} (1989) 59--77.

\bibitem{How95}
\textsc{J. M. Howie}, ``Fundamentals of Semigroup Theory'', 2nd ed., Clarendon Press, 1995.

\bibitem{How02}
\textsc{J. M. Howie}, \emph{Semigroups, past, present and future}, in Wanida Hemakul (ed.), ``Proc. Internat. Conference on Algebra and its Applications'', Chulalongkorn Univ., Bangkok, Thailand, 2002, 6--20.

\bibitem{Iva94}
\textsc{S.~V. Ivanov}, \emph{The {Burnside} problem for all sufficiently large exponents}, Internat. J. Algebra Comput. \textbf{4} (1994), 1--300.

\bibitem{Jur78}
\textsc{A. Jura}, \emph{Determining ideals of a given finite index in a finitely presented semigroup}, Demonstratio Math. \textbf{11} (1978), 813--827.

\bibitem{kad-szen}
\textsc{J. Kad\'ourek}, \textsc{M. B. Szendrei}, \emph{A new approach in the theory of orthodox semigroups}, Semigroup Forum \textbf{40} (1990) 257--296.

\bibitem{KoWa57}
\textsc{R. J. Koch}, \textsc{A. D. Wallace}, \emph{Stability in semigroups}, Duke Math. J. \textbf{24} (1957), 193--195.

\bibitem{Kol59}
\textsc{B. Kolibiarov\'a}, \emph{O \v{c}iasto\v{c}ne komutat\'\i{}vnych periodick\'ych pologrup\'ach}, Matematicko-Fyzik\'alny \v{C}asopis \textbf{9}, No. 3 (1959), 160--172.

\bibitem{Koz12}
\textsc{P. A. Kozhevnikov}, \emph{On nonfinitely based varieties of groups of large prime exponent}, Comm. Algebra \textbf{40} (2012), 2628--2644.

\bibitem{Lys96}
\textsc{I. G. Lysenok}, \emph{Infinite Burnside groups of even exponent}, Izvestiya: Mathematics \textbf{60} (1996), 453--654.

\bibitem{Neu67}
\textsc{H.~Neumann}, ``Varieties of Groups'', Springer Verlag, 1967.

\bibitem{NoAd68}
\textsc{P. S. Novikov}, \textsc{S. I. Adian}, \emph{On infinite periodic groups}. I, II, III. Math. USSR--Izv. \textbf{2} (1968), 209--236, 241--479, 665--685.

\bibitem{PaVo96}
\textsc{F. Pastijn}, \textsc{M. V. Volkov}, \emph{Minimal non-cryptic e-varieties of regular semigroups}, J. Algebra \textbf{184} (1996) 881--896.

\bibitem{PaVo05}
\textsc{F. Pastijn}, \textsc{M. V. Volkov}, \emph{$R$-compatible varieties of semigroups}, Acta Sci.\ Math. (Szeged) \textbf{71} (2005), 521--554.

\bibitem{PaVo06a}
\textsc{F. Pastijn}, \textsc{M. V. Volkov}, \emph{Cryptic varieties of semigroups}, Semigroup Forum \textbf{72} (2006), 159--189.

\bibitem{PaVo06b}
\textsc{F. Pastijn}, \textsc{M. V. Volkov}, \emph{$D$-compatible varieties of semigroups}, J. Algebra \textbf{299} (2006), 62--93.

\bibitem{Pet84}
\textsc{M. Petrich}, ``Inverse Semigroups'',  John Wiley \& Sons, Inc., 1984.

\bibitem{PR99}
\textsc{M. Petrich}, \textsc{N. R. Reilly}, ``Completely Regular Semigroups'',  John Wiley \& Sons, Inc., 1999.

\bibitem{Ras81}
\textsc{V. V. Rasin}, \emph{On the varieties of Cliffordian semigroups}, Semigroup Forum \textbf{23} (1981), 201--220.

\bibitem{RRW98}
\textsc{E. F. Robertson}, \textsc{N. Ru\v{s}kuc}, \textsc{J. Wiegold}, \emph{Generators and relations of direct products of semigroups}, Trans. Amer.\ Math.\ Soc. \textbf{350} (1998), 2665--2685.

\bibitem{Rob96}
\textsc{D. J. S. Robinson}, ``A Course in the Theory of Groups'', Springer Verlag, 1996.

\bibitem{Rus98}
\textsc{N. Ru\v{s}kuc}, \emph{On large subsemigroups and finiteness conditions of semigroups}, Proc. London Math. Soc. (3) \textbf{76} (1998), 383--405.

\bibitem{Sap87}
\textsc{M.~V.~Sapir}, \emph{Problems of Burnside type and the finite basis property in varieties of semigroups}, Math. USSR--Izv. \textbf{30} (1987), 295--314.

\bibitem{Sap14}
\textsc{M.~V.~Sapir}, with contributions by \textsc{V. S. Guba} and \textsc{M. V. Volkov}, ``Combinatorial Algebra: Syntax and Semantics'', Springer Verlag, 2014.

\bibitem{She05}
\textsc{L. N. Shevrin}, \emph{Epigroups}, in Valery B. Kudryavtsev, Ivo G. Rosenberg, and Martin Goldstein (eds.), ``Structural Theory of Automata, Semigroups, and Universal Algebra'', Springer Verlag, 2005, 331--380.

\bibitem{SiSo16}
\textsc{P. V. Silva}, \textsc{F. Soares}, \emph{Local finiteness for Green relations in ($I$-)semigroup varieties}, Preprint, 2016 \url{https://arxiv.org/abs/1606.03866}	

\bibitem{Vol89}
\textsc{M. V. Volkov}, Semigroup varieties with modular lattice of subvarieties, Sov. Math. (Izvestiya VUZ) \textbf{33}, No.6 (1990), 48--58.

\bibitem{Vol00}
\textsc{M. V. Volkov}, \emph{``Forbidden divisor'' characterizations of epigroups with certain properties of group elements}, in ``Algebraic Systems, Formal Languages and Computations'' [Surikaisekikenkyusho Kokyuroku 1166], Research Institute for Mathematical Sciences, Kyoto University, 2000, 226--234.

\bibitem{Zel90}
\textsc{E. I. Zelmanov}, \emph{Solution of the restricted Burnside problem for groups of odd exponent}, Math. USSR--Izv. \textbf{36} (1990), 41--60.

\bibitem{Zel91}
\textsc{E. I. Zelmanov}, \emph{Solution of the restricted Burnside problem for 2-groups}, Math. USSR-Sb. \textbf{72} (1992), 543--565.
\end{thebibliography}
\end{document}